\documentclass[a4paper]{article}
\usepackage[all]{xy}\usepackage[latin1]{inputenc}        
\usepackage[dvips]{graphics,graphicx}
\usepackage{amsfonts,amssymb,amsmath,color,mathrsfs, amstext}
\usepackage{amsbsy, amsopn, amscd, amsxtra, amsthm,authblk}
\usepackage{enumerate,algorithmic,algorithm}
\usepackage{upref}
\usepackage{geometry}
\usepackage[displaymath]{lineno}
\usepackage{float}
\usepackage{yhmath}
\usepackage{booktabs}
\usepackage{subcaption}
\usepackage{multirow}
\usepackage{makecell}
\usepackage[colorlinks,
            linkcolor=red,
            anchorcolor=red,
            citecolor=red
            ]{hyperref}
\usepackage{upgreek}

\numberwithin{equation}{section}

\def\cL{\mathcal{L}}
\def\cP{\mathcal{P}}
\def\cH{\mathcal{H}}
\def\cX{\mathcal{X}}
\def\cY{\mathcal{Y}}

\def\R{\mathbb{R}}

\def\T{\mathbb{T}}

\newtheorem{theorem}{Theorem}[section]
\newtheorem{lemma}{Lemma}[section]
\newtheorem{proposition}{Proposition}[section]
\newtheorem{remark}{Remark}[section]
\newtheorem{corollary}{Corollary}[section]
\newtheorem{definition}{Definition}[section]
\newtheorem{assumption}{Assumption}[section]

\begin{document}

\title{Accelerating sampling via asymptotic relaxation enhancing flows}

\author[a]{Yuanyuan Feng\thanks{
E-mail: yyfeng@math.ecnu.edu.cn}}
\author[b]{Lei Li\thanks{E-mail: leili2010@sjtu.edu.cn}}
\author[c]{Jian-Guo Liu\thanks{E-mail: jliu@math.duke.edu}}
\author[d]{Xiaoqian Xu\thanks{E-mail: xiaoqian.xu@dukekunshan.edu.cn}}

\affil[a]{School of Mathematical Sciences,  Key Laboratory of MEA (Ministry of Education) \& Shanghai Key Laboratory of PMMP,  East China Normal University, Shanghai 200241, China}
\affil[b]{School of Mathematical Sciences, Institute of Natural Sciences, MOE-LSC, Shanghai Jiao Tong University, Shanghai, 200240, P.R.China.}
\affil[c]{Department of Mathematics, Department of Physics, Duke University, Durham, NC 27708, USA.}
\affil[d]{Zu Chongzhi Center, Duke Kunshan University, Kunshan, 215316, P.R.China}

\date{}

\maketitle

\begin{abstract}
In this paper, we accelerate Langevin Monte Carlo sampling from Gibbs measures $\pi\propto \exp(-U)$ by adding a large drift that preserves the invariant measure.
For warm-start initial data, we characterize the sharp asymptotic decay rate of the relative entropy and introduce asymptotic relaxation enhancing flows: sequences that achieve arbitrarily fast decay.
We construct such flows on the torus by scaling cellular flows and pushing them forward via diffeomorphisms, and we extend the construction to the full space using a Lyapunov function method to control behavior at infinity without periodization, obtaining explicit finite energy flows that guarantee arbitrarily fast convergence under natural growth conditions on $U$.
\end{abstract}

\section{Introduction}\label{section:introduction}

Sampling from a probability distribution of the form  
$$
\pi(x)\propto e^{-U(x)},\qquad x\in\Omega,
$$
is a fundamental task in computational science, where $\Omega$ is the spatial domain. Note that we have set the inverse temperature to $1$ for simplicity, and the general value of inverse temperature follows by redefinition of $U$ or possible time-scaling. The applications of sampling range from molecular dynamics to Bayesian inference \cite{andrieu2003introduction,gelman1995bayesian}. We will consider $\Omega=\mathbb{T}^d$ or $\R^d$ in this work for convenience. A widely used method to sample from this probability measure is the overdamped Langevin Monte Carlo (LMC), which simulates  
\begin{equation}\label{eq:langevin}
dX_t = -\nabla U(X_t)\,dt + \sqrt{2}\,dW_t,
\end{equation}
whose invariant measure is precisely $\pi$. 

LMC is popular because it requires only gradient information of $U$ and can be implemented easily even in high dimensions \cite{andrieu2003introduction, gelman1995bayesian}, but a key challenge is to quantify how fast the law of the discretized process approaches $\pi$. Traditional weak convergence analysis gives error bounds that depend on a fixed test function, and do not directly control distances between probability measures \cite{milstein2004stochastic}. A more powerful approach is to use the \textbf{relative entropy} (Kullback-Leibler divergence) (see \eqref{eq:relativeentropy} below).
The relative entropy is non-negative, convex, and, more importantly, it controls both the total variation (via Pinsker's inequality) and the Wasserstein distances (via Talagrand-type transport inequalities) \cite{pinsker1964information, talagrand1996transportation, otto2000generalization}. Moreover, for Langevin dynamics, the time derivative of the relative entropy with respect to the invariant measure has a simple dissipative form, which makes it a natural tool for convergence analysis \cite{villani2009hypocoercivity}. This idea has been used to obtain sharp convergence rates for numerical schemes. For instance, sharp $O(\Delta t^2)$ bounds ($\Delta t$ is the time step) for the relative entropy error of the Euler-Maruyama scheme have been established for additive noise \cite{mou2022improved, li2025sharp,li2025relative}, and have recently been extended to multiplicative noise using Malliavin calculus \cite{li2024estimates}. Moreover, for the random splitting Langevin Monte Carlo, fourth-order relative entropy bounds have been established, leading to second-order Wasserstein-1 error \cite{li2025ergodicity}. These developments show that relative entropy provides rigorous and sharp bounds  for sampling error. The benefit of the relative entropy lies in the fact that the derivative can be computed conveniently and the so-called logarithmic Sobolev inequality could be utilized to close the estimate yielding exponential ergodicity and sharp error bounds of the Langevin dynamics under mild conditions on $U$.

Though the log-Sobolev inequality gives exponential ergodicity, the convergence of standard LMC can be extremely slow (i.e., the rate in the exponential ergodicity is very small) when the potential is non-convex, due to the metastability phenomenon \cite{freidlin1998random, bovier2004metastability}. To overcome this, one can add a drift that preserves the invariant measure but actively mixes the distribution. Consider the modified dynamics  
\begin{equation}\label{eq:langevinFlow}
dX_t = Av(X_t)\,dt - \nabla U(X_t)\,dt + \sqrt{2}\,dW_t,
\end{equation}
where $A\gg 1$ and $\nabla\cdot(\pi v)=0$. When the drift is chosen appropriately, it can dramatically accelerate convergence. A simple choice is to take $v = J\nabla U$ with an antisymmetric matrix $J$, which gives a constant-factor speed-up but does not eliminate the exponential dependence on the temperature \cite{hwang1993accelerating,lelievre2013optimal,rey2015irreversible,duncannonreversible,duncan2016variance,lu2018analysis}. A more powerful class of drifts is given by \textbf{relaxation enhancing} flows, introduced by Constantin et al. \cite{CKRZ08}. A divergence-free flow $u$ on a compact manifold is called relaxation enhancing if $u\cdot\nabla$ has no non-constant $H^1$ eigenfunctions; such flows make the $L^2$ norm of solutions to the advection-diffusion equation arbitrarily small in arbitrarily short time when the flow amplitude is large. Quantitative refinements of this phenomenon have been developed in subsequent works, which relate dissipation time and energy decay explicitly to the underlying mixing rate~\cite{FI19, CZDG20}.
The acceleration provided by such flows can be quantified in two complementary ways: the spectral gap asymptotics of Franke et al. \cite{franke2010behavior} and the resolvent estimate of Wei \cite{Wei18Mix}, the latter linking the semigroup decay to $\Psi_A=\inf|(\mathcal{L}_A-i\lambda)f|$. All of the above results concern compact manifolds (e.g., the torus $\mathbb{T}^d$). The situation on unbounded domains is more subtle. In particular, Zlato\v{s} \cite{zlatovs2010diffusion} characterized dissipation-enhancing flows on $\mathbb{R}^2$ and $\mathbb{R}\times\mathbb{T}$, and applied them to quenching of reactions. The same principle, which is using relaxation enhancing flows or mixing flows to accelerate convergence, has recently been applied to Langevin sampling by Christie et al. \cite{christie2025speeding} on the torus, who showed that exponentially mixing flows can make both the dissipation time and the mixing time arbitrarily small.

Motivated by these results, we now examine the effect of such flows on the relative entropy. Our observation is that for \textbf{warm-start} initial data, where the ratio $\rho/\pi$ is bounded above and below, the relative entropy is equivalent to the square of $L^2(\pi)$ norm of $\rho/\pi-1$. Hence the decay of the relative entropy can be studied through the $L^2(\pi)$ decay of the semigroup generated by $\mathcal{L}_A = -\nabla U\cdot\nabla + Av\cdot\nabla + \Delta$. Following the resolvent method \cite{Wei18Mix} together with the spectral gap result by Franke \cite{franke2010behavior}, we show that the asymptotic decay rate of the relative entropy as $A\to\infty$ is given by  
$$
r(v;\pi)=\inf\left\{\frac{\int|\nabla\phi|^2\pi}{\int|\phi|^2\pi} : \phi\in H^1(\pi),\ \int \phi \pi =0,\ v\cdot\nabla\phi=i\mu\phi\ \text{for some }\mu\in\mathbb{R}\right\}.
$$
If $r(v;\pi)=\infty$ (i.e., $v$ is relaxation enhancing), the relative entropy decays arbitrarily fast as $A$ increases. However, this conclusion depends on the existence of a flow that is relaxation enhancing with respect to $\pi$, and building such flows directly is not easy. To overcome this, we introduce \textbf{asymptotic relaxation enhancing} flows: sequences $v_n$ with $r(v_n;\pi)\to\infty$. Such flows can be obtained by scaling cellular flows on the torus and pushing them forward by a diffeomorphism that sends Lebesgue measure to $\pi$. By analyzing the quantity $r(v_n;\pi)$, we show that scaling cellular flows on the torus and pushing them forward by a diffeomorphism that sends Lebesgue measure to $\pi$ yields such sequences. This construction gives a simple qualitative explanation for the phenomenon studied in Iyer, Xu and Zlato\v{s} \cite{iyer2021convection}, where a more involved stochastic method provided quantitative bounds on the dissipation time and effective diffusivity. This yields explicit families of flows for which the relative entropy can be made to decay arbitrarily fast. On the other hand, for non-compact domains such as $\mathbb{R}^d$, the situation is more sophisticated. Zlato\v{s} \cite{zlatovs2010diffusion} characterized dissipation-enhancing flows in two dimensions, but his construction for explicit example flows relies on periodization and does not directly extend to general weighted measures. Instead, we use a Lyapunov function method to control the behavior at infinity. This approach works directly with the weighted measure $\pi$ and does not require periodization. Under natural growth conditions on $U$ (e.g., Gaussian tails), the same conclusions hold.

Based on these developments, we now summarize our main findings:
\begin{enumerate}
     \item \textbf{Sharp rate via $r(v;\pi)$.} When $r(v;\pi)$ is finite, we prove that $\Psi_A$ converges to $r(v;\pi)$ as $A\to\infty$. This sharp rate follows from combining the resolvent estimate of Wei \cite{Wei18Mix} with the spectral gap asymptotics of Franke et al. \cite{franke2010behavior}.

    \item \textbf{Asymptotic relaxation-enhancing flows in weighted space and extension to full space} For any smooth target density $\pi$ on $\mathbb{T}^2$, we construct a sequence of flows $v_n$ with $r(v_n;\pi)\to\infty$ using cellular flows and a diffeomorphism. This construction provides a simple qualitative proof of the relaxation enhancement phenomenon studied in \cite{iyer2021convection}. Hence the relative entropy can be made to decay arbitrarily fast by an explicit family of such flows.
     Under suitable growth conditions on $U$ (e.g., Gaussian tails), the same conclusions hold on non-compact domains. The proof uses a Lyapunov function method, which differs from the periodization approach used by Zlato\v{s} \cite{zlatovs2010diffusion} for constructing relaxation enhancing flows.

    \item \textbf{Relative entropy decay.} For warm-start initial data, a relaxation-enhancing flow (i.e., $r(v;\pi)=\infty$) or a sequence of asyptotic relaxation enhancing flows make the relative entropy decay arbitrarily fast when $A$ is large enough. 
    This extends the $L^2$ results of \cite{CKRZ08, zlatovs2010diffusion} and determines the limiting decay rate of the relative entropy

\end{enumerate}

The paper is organized as follows. Section \ref{section:setup} sets up the framework and recalls functional inequalities. Section \ref{section:speedup} presents the resolvent estimate and the relation between $\Psi_A$ and $r(v;\pi)$. Section \ref{section:examples} constructs asymptotic relaxation enhancing flows on the torus and the full space.

\section{Setup and preliminaries}\label{section:setup}

The Fokker-Planck equation for \eqref{eq:langevinFlow} is given by
\[
\partial_t\rho=\nabla\cdot((\nabla U(x)-Av)\rho)+\Delta \rho.
\]
Consider the invariant measure
\[
\pi\propto \exp(-U(x)).
\]
If
\begin{gather}\label{eq:weighteddivfree}
\nabla\cdot(v(x)e^{-U(x)})=0,
\end{gather}
the invariant measure is unchanged.

To obtain flows satisfying \eqref{eq:weighteddivfree}, one may make use of the differential forms that generates divergence free field (e.g., the streamfunction in 2D).
If the domain is globally simply connected, a divergence free field $\tilde{v}$ satisfies
\[
\iota_{\tilde{v}}(\omega)=d\beta
\]
for some $(d-2)$-form $\beta$, where $\omega$ is the volume form. In particular, in $\R^d$, one has
\[
\sum_{i=1}^n (-1)^{i-1} \tilde{v}^i \,
dx^1 \wedge \cdots \wedge \widehat{dx^i} \wedge \cdots \wedge dx^n=d\beta.
\]
Hence, using the $d-2$ forms $\beta$, one may construct the desired weighted divergence free $v$
using $\pi^{-1}d\beta$. Since $\pi$ often decays fast, one may add a moderating function $\chi$ to obtain a velocity field that does not grow too fast,  in particular,
\[
\iota_{\pi v}(\omega)=d(\chi \beta).
\]
For example, one may choose $\chi=\pi$ to obtain a field that is comparable to the field corresponding to $d\beta$. 
Of course, if the domain is not globally simply connected like the torus, such $\beta$ may not exist. Another possible way to obtain divergence free field for the weighted measure is to use the pushforward map of diffeomorphisms (see section \ref{subsec:retorus}).

\subsection{Entropy dissipation and logarithmic Sobolev inequality}

We consider the relative entropy (KL divergence) to measure the difference between two probability measures, which is defined as
\begin{gather}\label{eq:relativeentropy}
\cH(\mu\mid \nu) = \left\{
\begin{aligned}
  &\int_{\Omega} \log \frac{\mathrm{d} \mu}{\mathrm{d}\nu} \mathrm{d}\mu, & \text{if} ~ \mu \ll \nu,\\
 & \infty, & \text{else}.
\end{aligned}\right.
\end{gather}
Here, $\frac{\mathrm{d} \mu}{\mathrm{d}\nu}$ denotes the Radon-Nikodym derivative of $\mu$ with respect to $\nu$. 
The relative entropy is a non-negative quantity by Jensen's inequality and achieves zero only if $\mu= \nu$. Moreover, the relative entropy is jointly convex in its arguments and is lower semicontinuous with respect to weak convergence. In our problem, since both $\rho$ and $\pi$ have smooth densities so that the relative entropy is reduced to
\begin{gather}
H(t):=\mathcal{H}(\rho(t)|\pi)=\int \left(\log \frac{\rho}{\pi}\right)\rho\,dx.
\end{gather}
We would like to consider the quantity
\begin{gather}
q=\rho/\pi
\end{gather}
and then
\begin{equation}\label{eq:Ht}
H(t)=\mathcal{H}(\rho(t)|\pi)=\int q\log q \pi\,dx.
\end{equation}
It is not hard to derive the equation for $q$ as
\begin{equation}\label{eq:LA}
\partial_tq=(\nabla U-Av+2\nabla\log \pi)\cdot \nabla q+\Delta q
=(-\nabla U-Av)\cdot \nabla q+\Delta q=:\cL_A q.
\end{equation}
In particular, we denote
\begin{gather}\label{eq:L0}
\cL_0:=-\nabla U\cdot \nabla +\Delta \,.
\end{gather}

Later, we will always assume the following regularity on the potential $U$.
\begin{assumption}\label{ass:Ureg}
The potential $U\in C^2(\Omega)$. If $\Omega=\R^d$, we further assume the confining condition 
\begin{equation}\label{eq:confiningRd}
\lim_{|x|\to\infty} \left(\frac{|\nabla U|^2}{2}-\Delta U(x)\right)=+\infty.
\end{equation}
\end{assumption}

We note several simple properties about $\cL_0$.
\begin{itemize}
\item $\cL_0$ is self-adjoint in $L^2(\pi)$ and $-\cL_0$ is m-accretive, which is due to the fact that for any $f,~g\in L^2(\pi)$,
\begin{gather}\label{eq:symmetryL0}
\int f\cL_0 g\pi\,dx=-\int\nabla f\cdot\nabla g\pi\,dx.
\end{gather}
\item Suppose Assumption \ref{ass:Ureg} holds. 
Then the spectrum of $\cL_0$ is discrete (see~\cite[Chapter 4]{pavliotis2014} for reference).
\end{itemize}
It is well known that if Assumption \ref{ass:Ureg} holds, then one also has
\[
H^1(\pi) \hookrightarrow L^2(\pi) \quad \text{compactly}.
\]

We then take the time derivative and get
\[
\frac{d}{dt}H(t)=\int \log q(\cL_0 q-Av\cdot\nabla q)\pi\,dx
=-\int \nabla\log q\cdot\nabla q\pi\,dx+A\int q v\pi\cdot \nabla\log q\,dx.
\]
For the second term, one has
\[
\int q v\pi\cdot \nabla\log q\,dx
=\int v\pi \cdot \nabla q\,dx=-\int \nabla\cdot(v\pi) q\,dx=0.
\]
Hence, one has
\begin{equation}\label{eq:Htest1}
\frac{d}{dt}H(t)=-\int |\nabla\log q|^2 q\pi\,dx.
\end{equation}
This formula is the same as the original entropy dissipation without $v$.

Here, we recall the log-Sobolev inequality. Consider the entropy of the square of a Lipschitz function $f$ with respect to $\pi$, defined as:
\begin{equation}
	\operatorname{Ent}_{\pi}(f^2) = \int_{\R^d} f^2 \log(f^2) d \pi - \left( \int_{\R^d} f^2 d\pi \right) \log \left( \int_{\R^d} f^2 d\pi \right).
\end{equation}
We say $\pi$ satisfies the log-Sobolev inequality (LSI) with constant $\lambda$ if for all such $f$, one has 
\begin{equation}\label{eq:LSI}
	\operatorname{Ent}_{\pi}(f^2) \leq \frac{2}{\lambda} \int_{\R^d} |\nabla f|^2 d\pi\,.
\end{equation}
We take $f=\sqrt q$ and get 
\begin{align}
H(t)&=\operatorname{Ent}_\pi(f^2)\,,\\
\int|\nabla\log q|^2q\pi\,dx&=4\int |\nabla f|^2\pi\,dx\,.
\end{align}
If the log-Sobolev inequality holds for the measure $\pi$, we further obtain
\[
\frac{d}{dt}H(t)\le -2\lambda H(t),
\]
and thus the relative entropy decays exponentially with rate $2\lambda$. However, $\lambda$ itself can be extremely small, a standard example is a double well potential, where the process spends long periods in one well and transitions between wells are rare (that is, the metastability phenomenon).

\subsection{Warm-start initial data}\label{sec:warmup}
In order to enhance the convergence rate, one would ideally seek an improved log-Sobolev inequality with a larger constant. A natural intuition is that the presence of high-frequency components in the initial data leads to stronger dissipation, as such modes are rapidly attenuated by the diffusion operator. This indicates the possibility of a larger effective log-Sobolev constant.

 
When $\lambda$ is small, convergence can be slow, so it is natural to ask whether the LSI constant can be improved by restricting to functions with a large $H^1$ norm. High frequencies are damped more quickly by diffusion, and a larger $H^1$ norm indicates more oscillation. The following example shows that this is not enough: the sharp constant stays the same even among functions with arbitrarily large $H^1$ norm.

Consider the following function:
\[
f_{\beta}(\mathbf{x}) = e^{\frac{\beta x_1}{2} - \frac{\beta^2}{4}}.
\]
It is easy to check that $\int_{\mathbb{R}^2}f_{\beta} ^2\,d\mu=1$, where $\mu$ is the standard Gaussian measure on $\mathbb{R}^2$ with $d\mu=\frac{1}{2\pi}e^{-\frac{|x|^2}{2}}\,dx$.
By simple calculation, one has $\|f_{\beta}\|^2_{H^1}=1+\frac{\beta^2}{4}$, which tends to infinity as $\beta\rightarrow \infty$. On the other hand, $f_\beta$ is an extremizer for the Gaussian logarithmic Sobolev inequality, and in fact
\[
\int f_{\beta}^2\log f_{\beta}^2 d\mu
=2\int |\nabla f_{\beta}|^2d\mu.
\]
Thus, even among functions with arbitrarily large $H^1$ norm, the sharp logarithmic Sobolev constant is still attained. Therefore, restricting the logarithmic Sobolev inequality to a class of functions with large $H^1$ norm cannot improve the optimal constant.
We notice that in our problem, we should have  $q\to 1$ as $t\to \infty$. Hence we may write
\begin{align}\label{e:qexpansion}
q=1+h\,,
\end{align}
where $h$ is small for large time and $h$ satisfies $\int h\pi\,dx=0$ due to mass conservation. In this regime, applying the Taylor expansion, we obtain
\begin{align}\nonumber
q\log q = h+\frac{h^2}{2}+o(h^2)\,.
\end{align}
Integrating against $\pi$, we get
\begin{align*}
\int q\log q \pi\,dx=\frac{1}{2}\int h^2 \pi\,dx+o(\|h\|_{L^2(\pi)}^2)
\end{align*}

Hence, in the near equilibrium regime, the relative entropy is comparable to the squared $L^2(\pi)$ norm. This means that classical relaxation enhancing methods, originally developed in the $L^2$ framework on compact manifolds, can be applied to study the entropy decay under a large drift. We therefore restrict in this paper to warm-start initial data as follows.

\begin{assumption}\label{ass:warmstart}
The initial distribution $\rho_0$ satisfies
\begin{gather}
 0<c_1\le  q(0)=\frac{\rho_0}{\pi}\le c_2<\infty. 
\end{gather}
\end{assumption}
Note that it must hold $0<c_1\leq 1\leq c_2$, since $\int q(0)\pi dx=1$.
This bound can be preserved by the maximum principle. We state it as the following lemma and omit the proof.
\begin{lemma}\label{l:warmstart}
If the initial distribution $\rho_0$ satisfies Assumption \ref{ass:warmstart}, then for all $t\ge 0$, one has
\[
c_1\le q(t) \le c_2.
\]
\end{lemma}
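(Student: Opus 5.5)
The plan is to apply the weak maximum principle to the equation \eqref{eq:LA} for $q$, namely $\partial_t q=\cL_A q=(-\nabla U-Av)\cdot\nabla q+\Delta q$. This is a linear parabolic equation with no zeroth-order term. Constants are therefore solutions, and both $q-c_1$ and $c_2-q$ satisfy the same equation with nonnegative initial data. So it suffices to show that nonnegative initial data stay nonnegative, or, more directly, that $\sup q(t)$ is nonincreasing and $\inf q(t)$ is nondecreasing in $t$.

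\textbf{Torus case.} On $\Omega=\T^d$ the domain is compact and the coefficients are continuous ($U\in C^2$, $v$ smooth). I would use the classical argument. Set $w_\epsilon=q-c_2-\epsilon t$ and suppose $w_\epsilon$ attains a positive maximum on $[0,T]\times\T^d$. This cannot happen at $t=0$. At an interior-in-time maximum, or at $t=T$, we have $\nabla q=0$, $\Delta q\le 0$ and $\partial_t q\ge 0$. Hence
\[
0\le\partial_t w_\epsilon=\Delta q-\epsilon<0,
\]
which is a contradiction. Letting $\epsilon\to0$ gives $q\le c_2$. The lower bound follows symmetrically, applying the same argument to $c_1-q$.

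\textbf{Whole-space case.} On $\R^d$ a pointwise maximum need not be attained, so I would instead use an $L^2(\pi)$ (Stampacchia) truncation argument, which also avoids any growth issue at infinity. Let $w=(q-c_2)_+$ and compute
\[
\frac{d}{dt}\frac12\int w^2\pi\,dx=\int w\,\cL_A q\,\pi\,dx .
\]
By \eqref{eq:symmetryL0}, the $\cL_0$ part contributes $-\int|\nabla w|^2\pi\le0$, since $\nabla w=\nabla q\,\mathbf 1_{\{q>c_2\}}$. The drift part is
\[
-A\int w\,v\cdot\nabla w\,\pi=-\frac A2\int v\pi\cdot\nabla(w^2)=\frac A2\int\nabla\cdot(v\pi)\,w^2=0
\]
by \eqref{eq:weighteddivfree}. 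Hence $\int w^2\pi$ is nonincreasing, and it vanishes initially, so $w\equiv0$ for all $t$. The same computation with $(c_1-q)_+$ gives the lower bound. Note this argument also covers the torus case, so it could serve as the unified proof.

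\textbf{Main obstacle.} The delicate step is justifying the integrations by parts on $\R^d$. One needs enough decay of $q$, $\nabla q$ and of $v\pi$ at infinity for the boundary terms to vanish, and enough regularity to differentiate $\int w^2\pi$ in time. I would handle this with a spatial cutoff $\eta_R$: multiply by $\eta_R^2$, control the resulting commutator terms $\int w^2|\nabla\eta_R|^2\pi$ and $A\int w^2|v\cdot\nabla\eta_R|\pi$ using the fact that $q\in L^2(\pi)$, and let $R\to\infty$. This requires mild integrability of $v$ against $\pi$, which I would assume for the drifts under consideration. An alternative is to work in the mild or weak solution framework, where the calculation holds for the semigroup generated by the m-dissipative operator $\cL_A$ on $L^2(\pi)$, approximating by smooth compactly supported data.
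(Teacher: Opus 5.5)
Your proof is correct. The paper gives no argument beyond the remark that the bound ``can be preserved by the maximum principle''. Your torus argument is exactly that classical weak maximum principle, and it uses nothing about $v$ except that the equation has no zeroth-order term.

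On $\R^d$ you take a different, energy-type route: the Stampacchia truncation $w=(q-c_2)_+$ in $L^2(\pi)$, which does use $\nabla\cdot(v\pi)=0$ to kill the transport term. In exchange it buys three things:
\begin{itemize}
\item it applies directly to the $L^2(\pi)$ semigroup solution $e^{t\cL_A}q_0$ that the paper works with later;
\item it needs no continuity of $q_0$ at $t=0$ (warm-start data are only bounded and measurable, so the classical argument would need $q_0$ mollified first);
\item it avoids the growth or Lyapunov-type conditions that a classical maximum principle on an unbounded domain with unbounded drift requires.
\end{itemize}

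One correction to your justification step. The drift commutator $A\int w^2|v\cdot\nabla\eta_R|\,\pi\,dx$ is not controlled by $q\in L^2(\pi)$ together with integrability of $v$, because $w^2$ is only in $L^1(\pi)$. It does vanish as $R\to\infty$ when $v$ is bounded, as for the compactly supported $v_n$ of Section~\ref{section:examples}, or grows at most linearly.

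For a general $v$ with $\int|v|^2\pi\,dx<\infty$, replace $\frac12 w^2$ by a convex $C^1$ truncation $\Phi_k(q)$ with $\Phi_k=0$ on $(-\infty,c_2]$ and $0\le\Phi_k'\le k$. The drift commutator then becomes $A\int\Phi_k(q)\,v\cdot\nabla\eta_R^2\,\pi\,dx$. This is at most $Ak\,\|q-c_2\|_{L^2(\pi)}\bigl\|\,|v|\,|\nabla\eta_R^2|\,\bigr\|_{L^2(\pi)}$, which tends to $0$. The diffusion commutator $\int\Phi_k'(q)\,\nabla\eta_R^2\cdot\nabla q\,\pi\,dx$ is handled using $\int_0^t\|\nabla q\|_{L^2(\pi)}^2\,ds<\infty$.
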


If we expand $q$ as in~\eqref{e:qexpansion}, the LSI is equivalent to
\begin{equation}\label{eq:Htest2}
2\lambda \int (1+h)\log(1+h)\pi dx\le \int \frac{|\nabla h|^2}{1+h}\pi \,dx.
\end{equation}
On the other hand, using Lemma~\ref{l:warmstart} and Taylor expansion, we have
\begin{align}
h+\frac{1}{2c_2}h^2\leq (1+h)\log(1+h)=h+\frac{f''(\xi)}{2}h^2\leq h+\frac{1}{2c_1}h^2\,,
\end{align}
where $f(x)=(1+x)\log(1+x)$ and $\xi$ lies in between $0$ and $h$. Also we have the following
\begin{align*}
\frac{|\nabla h|^2}{c_2}\leq \frac{|\nabla h|^2}{1+h}\leq\frac{|\nabla h|^2}{c_1}\,.
\end{align*}
Hence, to prove~\eqref{eq:Htest2}, it suffices to consider the Poincar\'e inequality
\begin{equation}\label{eq:Htest3}
\kappa\int h^2 \pi dx\le \int |\nabla h|^2 \pi dx\,.
\end{equation}
With this, we can choose 
\[
\lambda=\frac{\kappa c_1}{c_2}
\]
in~\eqref{eq:Htest2}\,. In this case, the improvement in Poincar\'e inequality then passes to the log-Sobolev inequality naturally.

\begin{remark}
In this work, we restrict ourselves to warm-start initial data and focus on the Poincar\'e inequality, which effectively captures the behavior of the LSI in the near-equilibrium regime. When the initial data is not warm-start, the density ratio $q$ typically decays to zero as $|x|\to \infty$, although it may remain close to $1$ in a central region. Understanding how the flow interacts with such spatial inhomogeneity, and whether it can enhance the LSI constant in this setting, remains an interesting open question.
\end{remark}

\subsection{The Wasserstein distances and transportation inequalities}

 Let $\cP(\Omega)$ be the set of all probability measures on $\Omega$, and $\cP_p(\Omega)$, indexed by some $p\ge 1$, be the set of probability measures having finite $p$-moment, i.e.,
\begin{equation}\label{eq:P2}
\mathcal{P}_{p}\left(\Omega\right):=\left\{\mu \in \mathcal{P}\left(\Omega\right): \int_{\Omega}|x|^p \mu(d x)<\infty\right\}.
\end{equation}
Clearly, $\cP_p\subset \cP_2$ for $p\ge 2$.
For $p\ge 1$ and any $\mu, \nu\in\mathcal{P}_{p}(\mathbb{R}^{d})$, the $p$-Wasserstein distance is defined by
 \begin{equation*}
 \mathcal{W}_{p}(\mu, \nu):=\left(\inf _{\gamma \in \Pi(\mu, \nu)} \int_{\Omega \times \Omega}d(x, y)^{p} \gamma(d x, d y)\right)^{1/p},
 \end{equation*}
where $d(x, y)$ is the distance between $x$ and $y$, and $\Pi(\mu, \nu)$ is the set of couplings of $\mu$ and $\nu$ (i.e., joint distributions with marginals to be $\mu$ and $\nu$ respectively). The space $\mathcal{P}_{p}\left(\Omega\right)$ is a Polish space under the $p$-Wasserstein metric.

A class of important functional inequalities is the transportation inequalities \cite{talagrand1996transportation,bobkov2001hypercontractivity}. The Talagrand transportation inequality gives the control for $W_2(\mu,\nu)$ if $\nu$ satisfies a log-Sobolev inequality with constant $\lambda$ as in \eqref{eq:LSI} \cite{talagrand1991new,otto2000generalization}:
\begin{equation}
W_1(\mu, \nu)\le W_2(\mu,\nu) \leq \sqrt{\frac{2}{\lambda} \cH(\mu |\nu)}.
\end{equation}

Moreover, using the weighted Csisz\'ar-Kullback-Pinsker inequality \cite{bolley2005weighted}, one has
\begin{equation}
W_1(\mu,\nu) \leq C_{\nu}\sqrt{ \cH(\mu |\nu)},
\end{equation}
for any $\nu$ satisfying the following tail behavior:
\begin{equation}\label{eq:W1condition}
C_\nu:=2 \inf _{\alpha>0}\left(\frac{1}{2 \alpha}\left(1+\log \int_{\Omega} e^{\alpha|x|^2} d \nu(x)\right)\right)^{\frac{1}{2}}<+\infty.
\end{equation}

As an extra comment, we recall the Pinsker's inequality \cite{pinsker1964information,bolley2005weighted}, which claims that the total variation norm (and also some weighted versions) can be controlled by the square root of the relative entropy
\begin{gather}
TV(\mu, \nu)\le \sqrt{2\cH(\mu\mid \nu)},
\end{gather}
where
\[
TV(\mu, \nu)=\int_{\Omega} d|\mu-\nu|=2 \sup_{B\subset \Omega}|\mu(B)-\nu(B)|
\,,\]
is the standard total variation norm.
This inequality holds without any restrictions on the measures involved.

\section{Accelerating sampling by flows}\label{section:speedup}

We need to consider the effect of the transport operator
$Av\cdot\nabla$. In particular, the eigenfunctions of this operator become essential if $A\to\infty$. In this section, we explore how such an operator could affect the mixing.


\subsection{Relaxation enhancing flows}

Consider the space
\begin{gather}\label{eq:mzspace}
\cX=\left\{\phi\in L^2(\pi): \int \phi \pi dx=0 \right\}.
\end{gather}
Clearly, $\cX$ is an invariant space of the semigroup generated by $\cL_A$ for any $A\ge 0$.
Depending on the initial data we consider, one may focus on more restricted subspaces of $L^2(\pi)$, equipped with the same $L^2(\pi)$ norm. 
Hence, for the general purpose, we will assume that there is a closed subspace $\cY$ of $X$
\begin{gather}\label{eq:mzsspace}
\cY\subset \cX,
\end{gather}
such that it is invariant under the semigroup of $\cL_A$ for any $A\ge 0$.
If we consider the general initial data $\rho_0$, we have $\cY=\cX$.
Depending on the applications, $\cY$ may be smaller.  For example, for the flat measure on torus $\T\times \R$ and $v=(y, 0)$ is the shear flow, one may take $\cY$ to be the set of functions that has mean zero in the $x$ direction for any $y$.

Let
\begin{gather}\label{eigenspace}
\dot{\mathcal{I}}(v; \cY, \pi) :=\left\{\phi\in \cY:  \phi\in H^1(\pi), v\cdot\nabla \phi=i \mu \phi, \mbox{ for some } \mu\in \R \right\}
\end{gather}
be the eigenfunctions of $v\cdot\nabla$ in $\cY$ that are $H^1$. Any eigenfunction with nonzero eigenvalue is mean zero.
If $v\cdot \nabla$ has an eigenfunction $\phi$ with eigenvalue $\lambda$, since $v\cdot \nabla$ is skew-symmetric, $\lambda=i \mu$
for some $\mu\in \R$. Hence, $|\phi|$ is also an eigenfunction with zero eigenvalue. If $\phi$
is in $H^1$, so is $|\phi|$. For real functions, the $H^1$ norm of $|\phi|$ is the same as $\phi$. However, for complex functions, this is not generally true. For the convenience of the notation, unless we emphasize the dependence on $\cY$ or $\pi$, we will simply use $\dot{\mathcal{I}}(v)$ to indicate this set of $H^1$ eigenfunctions.

The concept of relaxation enhancing flows was introduced in \cite{CKRZ08} for the ability to enhance the mixing, and can be easily adopted for our purpose.
\begin{definition}[Relaxation enhancing]
A flow $v$ satisfying~\eqref{eq:weighteddivfree} is \emph{relaxation enhancing} if for every $\tau > 0$ and $\delta > 0$, there exists $A(\tau,\delta) > 0$ such that for all $A \ge A(\tau,\delta)$,
\[
\|e^{\tau \mathcal{L}_A}\|_{\cY \to \cY} < \delta.
\]
\end{definition}
The characterization of the relaxation enhancing flow in the paper by Constantine et al. \cite{CKRZ08} is unchanged for our weighted space.
\begin{theorem}
The flow $v$ is relaxation enhancing if and only if $v\cdot\nabla$ has no eigenvalues in $\cY$ or equivalently $\dot{\mathcal{I}}(v)=\emptyset$.
\end{theorem}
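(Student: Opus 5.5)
We follow the abstract argument of Constantin et al.~\cite{CKRZ08}, transplanted to $L^2(\pi)$. The structure they use is exactly what we have here. $\cL_0$ is self-adjoint and nonpositive on $\cY$ by \eqref{eq:symmetryL0}. Its spectrum is discrete, and $H^1(\pi)\hookrightarrow L^2(\pi)$ is compact (Assumption~\ref{ass:Ureg}), so $-\cL_0$ restricted to $\cY$ has compact resolvent. Finally, $v\cdot\nabla$ is skew-adjoint on $L^2(\pi)$: since $\nabla\cdot(v\pi)=0$ by \eqref{eq:weighteddivfree}, we have $\int (v\cdot\nabla f)\bar g\,\pi\,dx=-\int f\,\overline{v\cdot\nabla g}\,\pi\,dx$. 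One subtlety is that the operator written in \eqref{eq:LA} carries $-Av$, but the sign of $A$ is irrelevant for the characterization. The proof has two directions.

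\emph{Necessity.} Suppose $\phi\in\dot{\mathcal I}(v)$ with $v\cdot\nabla\phi=i\mu\phi$ and $\|\phi\|_{L^2(\pi)}=1$. Let $q(t)=e^{t\cL_A}\phi$. Using skew-adjointness of $v\cdot\nabla$, the derivative of $\langle q,\phi\rangle$ becomes $\langle q,\cL_0\phi\rangle$ plus a term of the form $\pm iA\mu\langle q,\phi\rangle$, understood in the weak sense via \eqref{eq:symmetryL0}, since $\phi$ is only known to be $H^1$. Multiplying by the phase $e^{\mp iA\mu t}$ removes the $A$-dependent term, and we get
\[
\frac{d}{dt}\big|\langle q,\phi\rangle\big|\ge -\|\nabla q\|\,\|\nabla\phi\|.
\]
Combining this with the energy identity $\|q(\tau)\|^2+2\int_0^\tau\|\nabla q\|^2\,dt=1$ and Cauchy--Schwarz gives
\[
|\langle q(\tau),\phi\rangle|\ge 1-\sqrt{\tau/2}\,\|\nabla\phi\|,
\]
uniformly in $A$. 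For small $\tau$ this is bounded away from zero, so $\|e^{\tau\cL_A}\|_{\cY\to\cY}$ cannot be made less than $\delta$. Hence $v$ is not relaxation enhancing.

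\emph{Sufficiency.} Assume $v\cdot\nabla$ has no $H^1$ eigenfunctions in $\cY$. We fix $\tau,\delta$ and reproduce the CKRZ argument. Let $P_N$ be the spectral projection of $-\cL_0$ onto its first $N$ eigenmodes in $\cY$, with eigenvalues $\lambda_1\le\lambda_2\le\dots\to\infty$.

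\begin{enumerate}[(i)]
\item If $\|\nabla q(t)\|^2\ge \lambda_N\|q(t)\|^2$ on a time interval, the energy identity $\frac{d}{dt}\|q\|^2=-2\|\nabla q\|^2$ gives decay at rate $2\lambda_N$. We choose $N$ so large that this alone yields a factor $\delta$ over time $\tau$.
\item Otherwise $q(t)$ lies in the compact set $K=\{\|q\|=1,\ \|\nabla q\|^2\le\lambda_N\}$. Here we use the RAGE theorem for the unitary group $e^{tAv\cdot\nabla}$ on $\cY$. Since there are no eigenvectors in the compact set $K$, the point spectral subspace meets $K$ only at zero. The absence of $H^1$ eigenfunctions means the projection onto the point spectrum of $v\cdot\nabla$ restricted to $K$ can be handled as in CKRZ's Lemma 3.3: the point spectral part's $H^1$-bounded pieces have arbitrarily large $\langle -\cL_0\cdot,\cdot\rangle$ once finitely many eigenvectors are accounted for. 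RAGE then implies that for $\phi\in K$, the free evolution $e^{tAv\cdot\nabla}\phi$ spends most of a time window of length $\sim 1/A$ outside $\operatorname{Ran}P_N$, i.e.\ it acquires large $H^1$ norm.
\item A perturbation estimate compares $e^{t\cL_A}\phi$ with $e^{tAv\cdot\nabla}\phi$ over short times $t\lesssim \tau_1$, controlling the error by $\|\nabla\phi\|$. It shows the true solution also accumulates $\int\|\nabla q\|^2\,dt\gtrsim$ a fixed fraction of $\|q\|^2$ in each such window. This gives a uniform multiplicative decay per window, and iterating over $\tau/\tau_1$ windows produces the factor $\delta$ for $A\ge A(\tau,\delta)$.
\end{enumerate}

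The main obstacle is step (iii), the uniform-in-$K$ RAGE/perturbation estimate. In CKRZ this also needs to handle the case where $v\cdot\nabla$ has point spectrum with eigenfunctions not in $H^1$. That case is dealt with by showing that the eigenfunctions cannot contribute much to low modes uniformly on compact sets. Since all the ingredients---self-adjointness, compact resolvent, skew-adjointness, and the energy identity---hold verbatim in $L^2(\pi)$, we expect to cite their argument with Lebesgue measure replaced by $\pi$, checking only that $v\cdot\nabla$ is bounded from $H^1(\pi)$ to $L^2(\pi)$, e.g.\ assuming $v\in L^\infty$.
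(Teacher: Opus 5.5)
Your route is the paper's own. The paper gives no proof of this theorem; it only asserts that the characterization of \cite{CKRZ08} carries over verbatim to $L^2(\pi)$. Your necessity half is the CKRZ argument and is correct: the phase-corrected pairing with $\phi$ plus the energy identity give $\|e^{\tau\cL_A}\phi\|_{L^2(\pi)}\ge 1-\sqrt{\tau/2}\,\|\nabla\phi\|_{L^2(\pi)}$ uniformly in $A$.

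The sufficiency sketch, however, contains a false step. In (ii) you claim that the point spectral subspace $H_{pp}$ of $v\cdot\nabla$ meets $K$ only at zero. The criterion $\dot{\mathcal{I}}(v)=\emptyset$ only excludes $H^1$ eigenfunctions, so $v\cdot\nabla$ may have eigenvalues whose eigenfunctions are not $H^1$, as for the flow of Section~\ref{subsec:retorus}. If, e.g., its spectrum on $\cY$ is pure point, then $H_{pp}=\cY\supset K$.

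Even where your claim holds, it would not make the point component $P_pq$ of $q\in K$ small. That component does not leave $\operatorname{Ran}P_N$ under the free evolution, and RAGE says nothing about it; RAGE only handles $P_cq$. What CKRZ actually use for $P_pq$ is a time-averaged energy bound. Let $\Pi_E$ denote the projection onto the eigenspace of $v\cdot\nabla$ with eigenvalue $E$. Then
\[
\frac1T\int_0^T\bigl\|(-\cL_0)^{1/2}P_N e^{-tv\cdot\nabla}P_pq\bigr\|^2\,dt\;\longrightarrow\;\sum_E\bigl\|(-\cL_0)^{1/2}P_N\Pi_Eq\bigr\|^2\qquad(T\to\infty),
\]
since averaging kills the cross terms between distinct eigenvalues. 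Each nonzero $\Pi_Eq$ is an eigenfunction in $\cY$, hence not in $H^1(\pi)=D((-\cL_0)^{1/2})$, so each term tends to $\infty$ as $N\to\infty$. A Dini/compactness argument makes this uniform on the part of $K$ where $\|P_pq\|$ is not small, after truncating to finitely many eigenvalues that carry most of $\|P_pq\|^2$. So the point component gains large energy inside $\operatorname{Ran}P_N$; it does not escape it.

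Second, your hypothesis check is incomplete. CKRZ's abstract theorem requires, besides $\|v\cdot\nabla f\|\le C\|f\|_{H^1}$, that the transport group be bounded on $H^1$ with a locally bounded constant: $\|e^{tv\cdot\nabla}f\|_{H^1(\pi)}\le B(t)\|f\|_{H^1(\pi)}$. Your step (iii) needs exactly this. Comparing $q^A(t)=e^{t\cL_A}q_0$ with $q^0(t)=e^{-tAv\cdot\nabla}q_0$ gives $\|q^A(t)-q^0(t)\|^2\le\frac12\int_0^t\|\nabla q^0(s)\|^2ds$. This is controlled by $\|\nabla q_0\|$ only through $B(As)$, on windows where $As$ stays bounded. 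In $L^2(\pi)$ the bound holds if $\nabla v\in L^\infty$: the flow map $X_t$ of $v$ preserves $\pi$, so $\|\nabla(f\circ X_t)\|_{L^2(\pi)}\le e^{t\|\nabla v\|_\infty}\|\nabla f\|_{L^2(\pi)}$. Boundedness of $v$ alone does not give it. You also need $\cY$ to be invariant under the transport group in order to run RAGE on $\cY$. The paper only assumes invariance under $e^{t\cL_A}$, so this requires a limiting argument in $A$ (e.g.\ Trotter--Kato applied to $e^{(s/A)\cL_A}$).

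Finally, the paper already contains a much shorter sufficiency proof that avoids all of this. The compactness argument in the proof of Theorem~\ref{thm:REA} (the part assuming $\sigma<\infty$, which does not use the present theorem) shows that $\dot{\mathcal{I}}(v)=\emptyset$ forces $\Psi_A\to\infty$. Then \eqref{eq:decaybound} gives $\|e^{\tau\cL_A}\|_{\cY\to\cY}\le e^{-\Psi_A\tau+\uppi/2}\to0$. This resolvent route of \cite{Wei18Mix} needs only the compact embedding $H^1(\pi)\hookrightarrow L^2(\pi)$ and enough local integrability of $v$ to pass to the limit in the weak formulation. It uses no RAGE, no point/continuous splitting, and no $H^1$ bound on the transport group.
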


For the warm-start data, one could conclude the following.
\begin{proposition}
Suppose $v$ is relaxation enhancing in $\cY\subset L^2(\pi)$. Suppose that $q_0$ satisfies Assumption \ref{ass:warmstart} with $q_0-1\in \cY$. Then, for any given $B>0$ and $t>0$, there is $A_0$, whenever $A>A_0$, one has
\begin{equation}
H(t)\le H_0\exp(-B t),
\end{equation}
where $H(t)$ is defined as \eqref{eq:Ht}.
\end{proposition}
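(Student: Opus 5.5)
The plan is to reduce the entropy statement to the $L^2(\pi)$ decay of $h=q-1$, using the warm-start comparison from Section~\ref{sec:warmup}, and then to iterate the semigroup bound in the definition of relaxation enhancing. I will prove the slightly stronger claim that $H(s)\le H_0e^{-Bs}$ for every $s\ge t$, which contains the statement at time $t$.

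\textbf{Step 1: entropy versus $L^2(\pi)$.} Since $\cL_A 1=0$, the function $h(s)=q(s)-1$ solves $\partial_s h=\cL_A h$. So $h(s)=e^{s\cL_A}h_0$ with $h_0=q_0-1\in\cY$, and $h(s)$ stays in $\cY$ by the invariance of $\cY$. By Lemma~\ref{l:warmstart}, $c_1\le 1+h\le c_2$ for all times. The Taylor bounds of Section~\ref{sec:warmup} are
\[
h+\tfrac{1}{2c_2}h^2\le (1+h)\log(1+h)\le h+\tfrac{1}{2c_1}h^2 .
\]
Integrating against $\pi$ and using $\int h\pi\,dx=0$ gives
\[
\frac{1}{2c_2}\|h(s)\|_{L^2(\pi)}^2\le H(s)\le \frac{1}{2c_1}\|h(s)\|_{L^2(\pi)}^2 .
\]
The same bounds hold at $s=0$. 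It therefore suffices to show $\|h(s)\|^2\le \frac{c_1}{c_2}e^{-Bs}\|h_0\|^2$ for $s\ge t$.

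\textbf{Step 2: contraction and iteration.} First I record that $e^{s\cL_A}$ is a contraction on $L^2(\pi)$ for every $A$. Indeed, by \eqref{eq:symmetryL0} and the skew-symmetry of $v\cdot\nabla$ in $L^2(\pi)$, which follows from \eqref{eq:weighteddivfree}, one has
\[
\frac{d}{ds}\|h\|^2=-2\int|\nabla h|^2\pi\,dx\le 0 .
\]
Now take $\tau=t$ and choose $\delta>0$ with
\[
\delta^2\le \frac{c_1}{c_2}e^{-2Bt}.
\]
Let $A_0=A(t,\delta)$ be given by the definition of relaxation enhancing, so that $\|e^{t\cL_A}\|_{\cY\to\cY}<\delta$ for all $A>A_0$.

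For $s\in[kt,(k+1)t)$ with $k\ge1$, write $e^{s\cL_A}=e^{(s-kt)\cL_A}(e^{t\cL_A})^k$. The contraction property then gives
\[
\|h(s)\|^2\le \delta^{2k}\|h_0\|^2\le \Big(\frac{c_1}{c_2}\Big)^k e^{-2Bkt}\|h_0\|^2 .
\]
Since $c_1/c_2\le1$ and $2kt\ge (k+1)t> s$ for $k\ge1$, the right-hand side is at most $\frac{c_1}{c_2}e^{-Bs}\|h_0\|^2$. Combined with Step~1, this gives $H(s)\le H_0e^{-Bs}$ for all $s\ge t$.

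\textbf{Main obstacle.} The only delicate point is that the relaxation-enhancing property controls only the $L^2(\pi)$ norm, while the entropy is nonlinear. This is handled by the two-sided comparison in Step~1, which relies on the maximum principle (Lemma~\ref{l:warmstart}) and the vanishing of the linear term $\int h\pi\,dx$. The factor $c_1/c_2$ lost in this comparison is absorbed by choosing $\delta$ small. Extending the bound from time $t$ to all later times also uses the contraction of $e^{s\cL_A}$, which fills the gaps between multiples of $t$.
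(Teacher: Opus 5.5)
Your proof is correct, and it takes a different, more self-contained route than the paper's sketch. The paper starts from the dissipation identity \eqref{eq:Htest1}. Using the warm-start bounds, it turns this into $\frac{d}{dt}H\le -C\,r(s)H$, where $r(s)=\int|\nabla h|^2\pi/\int h^2\pi$ and $C=2c_1/c_2$, and integrates to \eqref{eq:Htrbound}. It then appeals to the RAGE-type argument of \cite{CKRZ08} to assert $\int_0^t r(s)\,ds\ge Bt$ for large $A$.

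You never differentiate $H$. You compare $H$ with $\|h\|_{L^2(\pi)}^2$ only at the endpoints, via $\frac{1}{2c_2}\|h\|^2\le H\le\frac{1}{2c_1}\|h\|^2$, and feed in the semigroup-norm definition of relaxation enhancing directly.

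The two routes are closer than they look. For the linear equation one has $\|h(t)\|^2=\|h_0\|^2\exp\bigl(-2\int_0^t r(s)\,ds\bigr)$, so the paper's claim about $\int_0^t r$ is exactly the $L^2$ smallness you take from the definition. The real difference is where the warm-start constants go:
\begin{itemize}
\item the paper keeps the prefactor $H_0$ but weakens the exponent by the factor $C$, so it needs $L^2$ decay at rate $B/C$;
\item you keep the exponent but lose the prefactor $c_2/c_1$, which you absorb by choosing $\delta^2\le\frac{c_1}{c_2}e^{-2Bt}$.
\end{itemize}

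Your version has three advantages. It avoids the informal ``RAGE makes $r(s)$ large'' step. It makes the dependence of $A_0$ on $(B,t,c_1/c_2)$ explicit. Through contraction and the splitting $e^{s\cL_A}=e^{(s-kt)\cL_A}(e^{t\cL_A})^k$, it gives the bound for all $s\ge t$ with a single $A_0$. The paper's version instead displays the mechanism, namely growth of the Rayleigh quotient, at the level of the entropy dissipation itself.
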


The proof is in spirit the same as the one in \cite{CKRZ08}. Here, we just sketch the argument and explain some main ideas.
Under the warm start condition, the entropy dissipation formula \eqref{eq:Htest1} and the equivalence between the LSI and the Poincar\'e inequality (see Section~\ref{section:setup}, \eqref{eq:Htest2}, \eqref{eq:Htest3}) yield the bound
\begin{gather}
\frac{d}{dt}H(t)
=-\int |\nabla \log q|^2 q\pi \,dx
\le -C\frac{\int |\nabla h|^2\pi\,dx}{\int |h|^2\pi\,dx}H(t).
\end{gather}
Hence,
\begin{equation}\label{eq:Htrbound}
H(t) \le H(0) \exp\!\Bigl( -C \int_0^t r(s)\,ds \Bigr),
\qquad
r(s) = \frac{\displaystyle\int |\nabla h(s)|^2\,\pi(dx)}{\displaystyle\int h(s)^2\,\pi(dx)},
\end{equation}
where $h = q-1$ and $C$ depends only on the constants $c_1,c_2$ from Assumption~\ref{ass:warmstart}.
The acceleration occurs because for a relaxation enhancing flow $v$, the transport operator $v\cdot\nabla$ has no $H^1$ eigenfunctions in the invariant subspace $\cY$.
By a RAGE type theorem (see \cite{CKRZ08}) this drives the solution toward high frequency modes, making $r(s)$ large.
In fact, for any given $t>0$ and any $B>0$, the amplitude $A$ can be chosen sufficiently large so that $\int_0^t r(s)\,ds \ge B t$. We refer the interested readers to \cite{CKRZ08}.
Consequently, the relative entropy can be made to decay arbitrarily fast.

Note that $A_0$ here depends on the two constants in Assumption \ref{ass:warmstart} and also $t$. For different $t$, the constant $A_0$ needed should be different.

\subsection{An example of relaxation enhancing flows on torus}\label{subsec:retorus}

To make the abstract definition concrete, in this subsection we construct an explicit relaxation-enhancing flow on $\mathbb{T}^2$ for a smooth target density $\pi$ when $\cY=\cX$, so that the improvement of the rate works for all possible warm-start initial data.

We first note the following simple fact, which follows by direct verification using chain rule.
\begin{lemma}
Let $Z: \Omega\to \Omega$ be a diffeomorphism with $z=Z(x)$. Then, the flow $v$ is transformed to
\[
\bar{v}=:Z_{\#}v=(v\cdot \nabla_x Z)\circ Z^{-1}.
\]
Moreover, the operator $\bar{v}\cdot\nabla_z$ has the same eigenvalues as $v\cdot\nabla_x$, and their eigenfunctions are related by
\begin{gather}
\bar{\phi}(Z(x))=\phi(x).
\end{gather}
\end{lemma}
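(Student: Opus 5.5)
The plan is to verify the lemma by a direct change of variables, using the chain rule in the form that defines the pushforward of a vector field. Throughout, write $z=Z(x)$ and $x=Z^{-1}(z)$. By definition,
\[
\bar v(z)=DZ(Z^{-1}(z))\,v(Z^{-1}(z)),
\]
which in components reads $\bar v^j(z)=\sum_i v^i(x)\,\partial_{x_i}Z^j(x)$. This is exactly the expression $(v\cdot\nabla_x Z)\circ Z^{-1}$ in the statement. The key identity is that, for any differentiable $\bar\phi$ on $\Omega$ and $\phi:=\bar\phi\circ Z$, one has
\[
v\cdot\nabla_x\phi(x)=\sum_{i,j} v^i(x)\,\partial_{x_i}Z^j(x)\,(\partial_{z_j}\bar\phi)(Z(x))=(\bar v\cdot\nabla_z\bar\phi)(Z(x)).
\]
In operator form this says $(v\cdot\nabla_x)\circ Z^*=Z^*\circ(\bar v\cdot\nabla_z)$, where $Z^*\bar\phi=\bar\phi\circ Z$ is the pullback.

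From this intertwining relation the eigenvalue statement follows at once. Suppose $\bar v\cdot\nabla_z\bar\phi=\lambda\bar\phi$. Composing with $Z$ gives
\[
v\cdot\nabla_x(\bar\phi\circ Z)=\lambda\,\bar\phi\circ Z,
\]
so $\phi=\bar\phi\circ Z$ is an eigenfunction of $v\cdot\nabla_x$ with the same eigenvalue $\lambda$. Conversely, starting from an eigenfunction $\phi$ of $v\cdot\nabla_x$, we set $\bar\phi=\phi\circ Z^{-1}$ and apply the same computation with $Z^{-1}$ in place of $Z$. This is legitimate because $(Z^{-1})_\#\bar v=v$, which is again a consequence of the chain rule via $D(Z^{-1})(z)=(DZ(x))^{-1}$. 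Since $Z^*$ is a bijection, nonzero eigenfunctions correspond to nonzero eigenfunctions, so the two spectra coincide. The eigenfunctions are related by $\bar\phi(Z(x))=\phi(x)$, as claimed.

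For the paper's later use (with $Z_\#(\text{Lebesgue})=\pi$, or more generally $Z_\#\mu=\nu$) I would also record the function-space bookkeeping. Since $Z$ is a diffeomorphism, on the torus, or with bounded derivatives, it has $DZ$ and $DZ^{-1}$ bounded. In that case $Z^*$ is an isometry from $L^2(\nu)$ to $L^2(\mu)$, because $\int|\bar\phi|^2\,d\nu=\int|\bar\phi\circ Z|^2\,d\mu$. It also maps $H^1$ to $H^1$ with equivalent norms, since $\nabla_x\phi=DZ^{T}(\nabla_z\bar\phi)\circ Z$. Mean-zero conditions are preserved as well. Hence the set $\dot{\mathcal I}$ is mapped bijectively. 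One further check is that weighted divergence-freeness transfers: $\nabla\cdot(v\mu)=0$ implies $\nabla\cdot(\bar v\nu)=0$. I would verify this weakly via $\int \bar v\cdot\nabla\bar\psi\,d\nu=\int v\cdot\nabla(\bar\psi\circ Z)\,d\mu=0$ for all test functions $\bar\psi$.

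The main obstacle is only notational: getting the index placement in $v\cdot\nabla_x Z$ to match the Jacobian action correctly. The $H^1$ equivalence on noncompact domains would need bounds on $DZ$, but that lies outside the bare algebraic statement.
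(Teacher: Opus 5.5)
Your proposal is correct and follows the paper's own route: the paper writes out no proof and only says the lemma ``follows by direct verification using chain rule,'' and your intertwining identity $v\cdot\nabla_x(\bar\phi\circ Z)=(\bar v\cdot\nabla_z\bar\phi)\circ Z$, combined with the bijectivity of $\bar\phi\mapsto\bar\phi\circ Z$, is exactly that verification. Your extra bookkeeping (the $L^2$ isometry when $Z_\#\mu=\nu$, and the preservation of mean zero and of weighted divergence-freeness, checked weakly against test functions) supplies what the paper uses implicitly in the torus construction that follows, and the same test-function change of variables extends the eigenfunction correspondence to the merely $L^2$ (e.g.\ discontinuous) eigenfunctions that construction relies on.
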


Let $\pi$ be a measure with a smooth density (still denoted by $\pi$) on $\T^2$.
In \cite[section 6]{CKRZ08}, a smooth flow $v$ on $\T^2$ has been constructed by Proposition $6.2$. This flow satisfies the following properties:
\begin{itemize}
\item This flow preserves the measure $dx dy$, or $\mathrm{div}( v)=0$.
\item This flow is smooth.
\item This flow has discontinuous eigenfunctions, hence none of its eigenfunctions belong to $H^1$.
\end{itemize}
Our aim is to construct a flow with similar properties, but with respect to a non-flat measure on $\mathbb{T}^2$. To do this, the idea is to construct a smooth diffeomorphism $Z: \T^2\to \T^2$ such that the distribution $F$ is transformed to $\pi \,dx dy$.
Then, the flow $\bar{v}=Z_{\#}v$ will satisfy
\[
\mathrm{div}(\pi \bar{v})=0.
\]
Since $Z$ is a diffeomorphism, the eigenfunctions of $\bar{v}\cdot \nabla$
will also be discontinuous and thus not in $H^1(\pi)$. Hence, $\bar{v}$ will be a desired relaxation enhancing flow
on $\T^2$.

The diffeomorphism can be constructed in many ways. The optimal transport map is one such example.
Another explicit construction is as follows the idea of this construction is very similar to the construction of Section 6 of \cite{CKRZ08}.
\begin{proposition}\label{prop:from1to2}
Assume $F$ and $\pi$ are both smooth function in $\mathbb{T}^2$. For a flow $v$ satisfies $\mathrm{div}(F v)=0$, there exists a smooth diffeomorphism $Z: \T^2\to \T^2$, so that $\mathrm{div}(\pi \bar{v})=0$ while $\bar{v}=Z_{\#}v$. Moreover, all the eigenvalues of $\bar{v}\cdot\nabla$ and $v\cdot\nabla$ are the same, if $\phi$ is an eigenfunction of $v\cdot\nabla$, then $Z_{\#}\phi$ is an eigenfunction of $\bar{v}\cdot\nabla$, and vice versa.
\end{proposition}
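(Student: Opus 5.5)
The plan is to reduce the statement to the existence of a smooth diffeomorphism $Z:\T^2\to\T^2$ with $Z_{\#}(F\,dx\,dy)=\pi\,dx\,dy$. Once such a $Z$ exists, the divergence condition and the correspondence of eigenvalues follow from a conjugation argument and the preceding lemma. Throughout I assume, as is implicit in the statement, that $F$ and $\pi$ are strictly positive and normalized to the same total mass, say $\int_{\T^2}F=\int_{\T^2}\pi=1$. Without this no such diffeomorphism can exist, so these hypotheses will be stated explicitly.

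\textbf{Construction of $Z$ (Knothe--Rosenblatt type, adapted to the torus).} Write $x\in\T$ for the first coordinate and $y\in\T$ for the second.

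First I treat the marginals. Set $f_1(x)=\int_\T F(x,y)\,dy$ and $p_1(x)=\int_\T \pi(x,y)\,dy$. Both are smooth, positive, and have mass $1$ on $\T$. Let $\mathcal F_1(x)=\int_0^x f_1$ and $\mathcal P_1(x)=\int_0^x p_1$. These are increasing maps $\R\to\R$ with $\mathcal F_1(x+1)=\mathcal F_1(x)+1$, and similarly for $\mathcal P_1$. Hence $Z_1:=\mathcal P_1^{-1}\circ\mathcal F_1$ is a lift of a smooth orientation-preserving circle diffeomorphism, and it satisfies $(Z_1)_{\#}f_1=p_1$.

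Next I treat the conditionals. For each fixed $x$, let $g_x(y)=F(x,y)/f_1(x)$ and $k_{x'}(y)=\pi(x',y)/p_1(x')$. These are smooth positive probability densities on $\T$, depending smoothly on their parameter. Define $Z_2(x,\cdot)=\mathcal K_{Z_1(x)}^{-1}\circ\mathcal G_x$, where $\mathcal G_x$ and $\mathcal K_{x'}$ are the periodic cumulative functions as before.

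Finally set $Z(x,y)=(Z_1(x),Z_2(x,y))$. Its Jacobian is triangular, with diagonal entries $f_1/p_1\circ Z_1>0$ and $g_x/k_{Z_1(x)}\circ Z_2>0$. The map is a bijection of $\T^2$, because it is a bijection on each fibre and on the base, so it is a smooth diffeomorphism. The change of variables formula gives $\pi(Z)\det DZ=F$, that is, $Z_{\#}(F\,dx\,dy)=\pi\,dx\,dy$.

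The one genuine subtlety is smoothness and periodicity of the inverse CDFs in the parameter. This follows from the implicit function theorem, since $\partial_y\mathcal G_x=g_x>0$, together with the periodicity relations above. I expect this bookkeeping to be the main technical point. As a fallback, Moser's trick also works: interpolate $\mu_s=(1-s)F+s\pi$, solve $\nabla\cdot(\mu_s w_s)=F-\pi$ on $\T^2$, which is solvable since the right side has mean zero, and integrate $w_s$.

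\textbf{Invariance of the measure under $\bar v=Z_{\#}v$.} The condition $\mathrm{div}(Fv)=0$ says that the flow $\Phi_t$ of $v$ preserves $F\,dx\,dy$. The flow of $\bar v$ is $Z\circ\Phi_t\circ Z^{-1}$, by the chain rule and the formula in the preceding lemma. This conjugated flow preserves $Z_{\#}(F\,dx\,dy)=\pi\,dx\,dy$. Differentiating at $t=0$ gives $\mathrm{div}(\pi\bar v)=0$.

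The same conclusion can be checked infinitesimally. For a test function $\psi$,
\[
\int \bar v\cdot\nabla\psi\,\pi\,dz=\int (v\cdot\nabla(\psi\circ Z))\,F\,dx=0 .
\]
This identity is just $\bar v\cdot\nabla_z\psi\circ Z=v\cdot\nabla_x(\psi\circ Z)$ combined with the change of variables.

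\textbf{Eigenfunctions.} The statement about eigenvalues and eigenfunctions is exactly the preceding lemma applied to $Z$. If $v\cdot\nabla\phi=i\mu\phi$, then $\bar\phi=\phi\circ Z^{-1}$ satisfies $\bar v\cdot\nabla\bar\phi=i\mu\bar\phi$, and the converse holds using $Z^{-1}$.

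I would also remark that $Z$ is bi-Lipschitz and $F,\pi$ are bounded above and below. Consequently $\phi\in H^1(F)$ if and only if $\bar\phi\in H^1(\pi)$, and continuity is preserved in both directions. This remark is what is needed afterwards to transfer the relaxation enhancing property from the flow of \cite{CKRZ08} with $F\equiv1$.
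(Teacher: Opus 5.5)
Your proposal is correct and follows essentially the same route as the paper: it builds the same Knothe--Rosenblatt map $Z=(P,Q)$ from the marginal and conditional cumulative distribution functions, checks $\det DZ = F/(\pi\circ Z)$, and reads off the eigenfunction correspondence from the conjugation lemma. Your version is more careful than the paper's, since you state the positivity and equal-mass hypotheses, check the periodicity of the construction on $\T^2$, and verify $\mathrm{div}(\pi\bar v)=0$ explicitly, all of which the paper leaves implicit.
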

\begin{proof}
One can easily get the statement for the eigenvalues and eigenvectors by the definition of diffeomorphism. We only need to construct such a diffeomorphism. 

Consider the marginals for $F$ and $\pi$
\begin{gather}
\bar{F}(x)=\int_0^1F(x, y)\,dy,\quad \bar{\pi}(p)=\int_0^1 \pi(p, q)\,dq,
\end{gather}
and their conditional cumulative distribution functions
\begin{gather}
G(y| x)=\frac{1}{\bar{F}(x)}\int_0^y F(x, y')dy',\quad H(q| p)=\frac{1}{\bar{\pi}(p)}\int_0^q \pi(p, q')dq'.
\end{gather}
Define $P(x)$ through
\[
\bar{H}(P(x))=\int_0^x \bar{F}(s)\,ds,\quad \bar{H}(x)=\int_0^x \bar{\pi}(s)\,ds,
\]
then we have
\[
P'(x) \bar{\pi}(P(x))=\bar{F}(x).
\]
Thus, with the following $Q$,
\[
Q(x, y)=H(\cdot | P(x))^{-1}(G(y| x)).
\]
The mapping
\[
Z(x, y)=(P(x), Q(x, y))
\]
satisfies the desired property. It is easy to verify that
\[
\det(D Z)=\frac{F(x,y)}{\pi(P(x), Q(x, y))}.
\]
\end{proof}

\subsection{Flows with nonconstant $H^1$ eigenfunctions}

Although the previous example shows that relaxation enhancing flows exist on $\mathbb{T}^2$, their construction is often very tricky and restrictive, especially on $\mathbb{R}^d$ with a decay weight. In particular, on the full space a steady relaxation enhancing flow with respect to a Gaussian measure would typically have infinite energy. Indeed, if $v = \pi^{-1}\nabla^\perp\psi$, then $\psi$ is a first integral, and the condition $\psi\notin\dot{\mathcal{I}}(v,\pi)$ forces $\int|v|^2\pi^3 = \infty$, which is far too restrictive. Therefore, it is more practical to study flows that possess nontrivial $H^1$ eigenfunctions and to quantify the best possible dissipation enhancement.

In this section, we explore how well a flow could enhance the dissipation if it has $H^1$ eigenfunctions.
For a given flow $v$, we define the following number
\begin{gather}\label{eq:defr}
r(v; \cY, \pi):=\inf_{\phi\in \dot{\mathcal{I}}(v; \cY, \pi)}\frac{\int |\nabla\phi|^2\pi dx}{\int |\phi|^2\pi dx}
\ge \kappa,
\end{gather}
where $\dot{\mathcal{I}}(v;\cY, \pi)$ is defined in \eqref{eigenspace} and $\kappa$ is the Poincar\'e constant in $L^2(\pi)$ with mean zero.
As above, we will use the notation $r(v)$ to indicate this number unless we emphasize the dependence on $\cY$
or $\pi$.
\begin{lemma}
If $\dot{\mathcal{I}}(v)$ contains a nonzero element, then $r(v)$ can be achieved.
\end{lemma}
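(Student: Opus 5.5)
We use the direct method of the calculus of variations: take a minimizing sequence in $\dot{\mathcal{I}}(v)$, normalize it, and extract a limit using the compact embedding $H^1(\pi)\hookrightarrow L^2(\pi)$.

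\emph{Minimizing sequence.} Since $\dot{\mathcal{I}}(v)$ is nonempty and the Rayleigh quotient is $0$-homogeneous, we may pick $\phi_n\in\dot{\mathcal{I}}(v)$ with $\|\phi_n\|_{L^2(\pi)}=1$ and $\int|\nabla\phi_n|^2\pi\,dx\to r(v)$. Here $r(v)<\infty$, and $r(v)\ge\kappa>0$ by \eqref{eq:defr}. Each $\phi_n$ satisfies $v\cdot\nabla\phi_n=i\mu_n\phi_n$ for some $\mu_n\in\R$.

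\emph{Compactness.} The sequence $(\phi_n)$ is bounded in $H^1(\pi)$. By the compact embedding $H^1(\pi)\hookrightarrow L^2(\pi)$ (on $\T^d$ this is Rellich; on $\R^d$ it follows from Assumption \ref{ass:Ureg}), we pass to a subsequence with $\phi_n\rightharpoonup\phi$ weakly in $H^1(\pi)$ and $\phi_n\to\phi$ strongly in $L^2(\pi)$. Then:
\begin{itemize}
\item $\|\phi\|_{L^2(\pi)}=1$, so $\phi\neq0$;
\item $\phi\in\cY$, since $\cY$ is closed;
\item $\int|\nabla\phi|^2\pi\,dx\le\liminf_n\int|\nabla\phi_n|^2\pi\,dx=r(v)$, by weak lower semicontinuity.
\end{itemize}
Hence it only remains to show $\phi\in\dot{\mathcal{I}}(v)$, i.e.\ that $\phi$ is an eigenfunction of $v\cdot\nabla$. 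The reverse inequality $r(v)\le\int|\nabla\phi|^2\pi\,dx$ is then automatic from the definition of $r(v)$.

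\emph{Passing to the limit in the eigen-equation (main obstacle).} We interpret $v\cdot\nabla\phi=i\mu\phi$ weakly. For a test function $\psi$ (smooth, compactly supported), the condition $\nabla\cdot(v\pi)=0$ gives
\[
\int \phi\, v\cdot\nabla\bar\psi\,\pi\,dx=-i\mu\int\phi\bar\psi\,\pi\,dx .
\]
The left side of this identity passes to the limit by $L^2(\pi)$ convergence of $\phi_n$, because $v\cdot\nabla\bar\psi$ is bounded with compact support. The difficulty is controlling $\mu_n$. Testing the equation for $\phi_n$ against a fixed $\psi$ with $\int\phi\bar\psi\pi\,dx\neq0$ (possible since $\phi\neq0$) expresses $\mu_n$ as a ratio of convergent quantities whose denominator has a nonzero limit. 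Hence $\mu_n$ is bounded, and along a further subsequence $\mu_n\to\mu\in\R$. Passing to the limit in the weak identity for $\phi_n$ yields the weak identity for $\phi$ with this $\mu$. Since $\phi\in H^1(\pi)$, the weak identity upgrades to $v\cdot\nabla\phi=i\mu\phi$ in $L^2_{loc}$, so $\phi\in\dot{\mathcal{I}}(v)$.

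If $\dot{\mathcal{I}}(v)$ is required to consist of mean-zero elements, this is preserved in the limit. For $\mu\neq0$ it is automatic. For $\mu=0$ it follows from $\phi\in\cY\subset\cX$.

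We conclude that $\phi$ attains the infimum, i.e.\ $r(v)=\int|\nabla\phi|^2\pi\,dx\big/\int|\phi|^2\pi\,dx$.
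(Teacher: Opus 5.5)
Your proof is correct and follows the same direct-method argument as the paper: a normalized minimizing sequence, the compact embedding $H^1(\pi)\hookrightarrow L^2(\pi)$, and weak lower semicontinuity of the Dirichlet energy. You also fill in the step the paper only calls easy, namely that the limit is again an eigenfunction: you obtain convergence of $\mu_n$ by testing against a fixed $\psi$ with $\int\phi\bar\psi\,\pi\,dx\neq0$, which is the same device the paper uses for $\lambda^{(k)}$ in the proof of Lemma~\ref{lmm:principal}.
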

\begin{proof}
By the definition of the infimum, we can take a sequence $\phi_n\in \dot{I}(v)$ such that $\int |\phi_n|^2\pi\,dx=1$
and
\[
\int|\nabla\phi_n|^2\pi \le r(v)+1/n.
\]
Then, one can draw a subsequence $\{\phi_{n_k}\}$ such that it converges weakly in $H^1(\pi)$ and strongly in $L^2(\pi)$ to some function $\phi$. Consequently, it is easy to see that $\phi\in \dot{\mathcal{I}}(v)$ and  
\[
\int|\nabla\phi|^2\pi \le r(v).
\]
Hence, $\phi$ is a minimizer.
\end{proof}

As pointed out in \cite{berestycki2005elliptic, franke2010behavior}, the number $r(v)$ is closely related to the spectral gap of $\cL_A$ as $A\to\infty$. 
It is easy to see that $\cL_A^{-1}$ is a compact operator on $\cY$ for any $A\ge 0$. Hence, $\cL_A$ has discrete eigenvalues.
Note that the principal eigenvalue of $\cL_A$ on $L^2(\pi)$ is zero with the eigenfunction $\phi=1$. As we consider the space $\cY$, the principal eigenvalue is actually the second eigenvalue of $\cL_A$ in $L^2(\pi)$, so the principal eigenvalue could be complex. Consider the set $\Lambda_A$ of eigenvalues of $-\cL_A$ and 
\begin{gather}
m_A:=\inf \{\mathrm{Re}(\lambda): \lambda\in \Lambda_A \}.
\end{gather}
If there is one eigenvalue whose real part equals $m_A$, then one calls this the principal eigenvalue. If the principal eigenvalue exists, we denote $(\lambda_A, \phi_A)$ the corresponding eigen-pair.
\begin{gather}
-\cL_A\phi_A=\lambda_A \phi_A,\quad \mathrm{Re}(\lambda_A)=m_A.
\end{gather}

\begin{lemma}\label{lmm:principal}
If $v\in L^2(\pi)$, then the principal eigen-pair exists, and the spectral gap satisfies
\[
m_A = \operatorname{Re}(\lambda_A) = \frac{\displaystyle\int |\nabla \phi_A|^2 \,\pi(dx)}{\displaystyle\int |\phi_A|^2 \,\pi(dx)} \ge \kappa .
\]
\end{lemma}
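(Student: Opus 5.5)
The plan is to establish discreteness and compactness first, so that a principal eigen-pair exists, and then to obtain the Rayleigh-quotient identity by testing the eigenvalue equation against $\bar\phi_A\pi$.

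\textbf{Existence.} On $\cY$ we have $\operatorname{Re}\langle -\cL_A f,f\rangle_{\pi} = \int|\nabla f|^2\pi\ge \kappa\|f\|^2$. The reason is that the transport part is skew-adjoint in $L^2(\pi)$, which follows from \eqref{eq:weighteddivfree}: $\operatorname{Re}\int (v\cdot\nabla f)\bar f\pi = \frac12\int v\pi\cdot\nabla|f|^2 = 0$. The assumption $v\in L^2(\pi)$ (interpreted with enough integrability that $v\cdot\nabla f\,\bar g\pi$ is integrable for $f,g$ in a core) is what lets the sesquilinear form
\[
a(f,g)=\int\nabla f\cdot\nabla\bar g\,\pi + A\int (v\cdot\nabla f)\bar g\,\pi
\]
be defined on $H^1(\pi)\cap\cY$. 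Coercivity and Lax--Milgram then give a bounded inverse $(-\cL_A)^{-1}:\cY\to H^1(\pi)\cap \cY$. By the compact embedding $H^1(\pi)\hookrightarrow L^2(\pi)$ (Assumption \ref{ass:Ureg}), this inverse is compact on $\cY$. Riesz--Schauder theory then gives a discrete spectrum $\Lambda_A$, consisting of eigenvalues of finite multiplicity accumulating only at infinity.

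\textbf{Real parts and the infimum.} For any eigenpair $-\cL_A\phi=\lambda\phi$ with $\phi\in\cY$, we multiply by $\bar\phi\pi$, integrate, and take real parts. Skew-symmetry kills the drift term, giving
\[
\operatorname{Re}\lambda\,\|\phi\|^2_{L^2(\pi)} = \int|\nabla\phi|^2\pi \ge \kappa\|\phi\|^2 .
\]
Hence every eigenvalue lies in $\{\operatorname{Re}\lambda\ge\kappa\}$. To see that the infimum $m_A$ is attained, we argue as follows:
\begin{itemize}
\item The eigenvalues of $-\cL_A$ are the reciprocals of the nonzero eigenvalues $\mu$ of the compact operator $K=(-\cL_A)^{-1}$.
\item For any $c>0$, the set $\{\operatorname{Re}\lambda\le c\}$ intersected with the numerical-range sector $|\operatorname{Im}\lambda|\le C_A\operatorname{Re}\lambda$ is a bounded region. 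It therefore contains only finitely many eigenvalues, since these correspond to $|\mu|\ge$ some positive bound.
\end{itemize}
So the infimum over a nonempty set (assuming $\cY\neq\{0\}$) is a minimum, and this yields $(\lambda_A,\phi_A)$. The formula in the statement is the identity above applied to $\phi_A$, together with $\operatorname{Re}\lambda_A=m_A$.

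\textbf{Main obstacle.} The delicate point is the sector bound $|\operatorname{Im}\lambda|\le C_A\operatorname{Re}\lambda$, i.e.\ controlling $A|\int v\cdot\nabla\phi\,\bar\phi\pi|$ by $\int|\nabla\phi|^2\pi$. With only $v\in L^2(\pi)$ this is not automatic; $v\in L^\infty$ would trivially suffice via Cauchy--Schwarz and Poincar\'e.

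If the sector bound fails, I would avoid it. The alternative is to note that $\Lambda_A$ is a discrete set, with each point an isolated eigenvalue, lying in $\operatorname{Re}\lambda\ge\kappa$. Take a minimizing sequence $\lambda_n$ with $\operatorname{Re}\lambda_n\to m_A$ and normalized eigenfunctions $\phi_n$. The identity above shows that $\phi_n$ is bounded in $H^1(\pi)$. By compactness, a subsequence converges strongly in $L^2(\pi)$ to some $\phi$ with $\|\phi\|=1$. Since $\phi_n = \lambda_n K\phi_n$, the sequence $\lambda_n$ must stay bounded, for otherwise $K\phi = \lim \phi_n/\lambda_n = 0$, contradicting injectivity. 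Passing to a further subsequence, $\lambda_n\to\lambda$, and $K\phi=\phi/\lambda$, so $\lambda$ is an eigenvalue with $\operatorname{Re}\lambda=m_A$.

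This second route only uses compactness of $K$ and the energy identity. I would present it as the main argument.
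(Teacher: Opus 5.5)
Your main argument (the ``second route'') is correct and shares the paper's skeleton. Both take a minimizing sequence of normalized eigenpairs, use the energy identity $\operatorname{Re}\lambda_n=\int|\nabla\phi_n|^2\pi$ to get an $H^1(\pi)$ bound, and use the compact embedding to get a strong $L^2(\pi)$ limit $\phi$ with $\|\phi\|_{L^2(\pi)}=1$.

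The difference is in how you control $\lambda_n$ and identify the limit. The paper fixes $\psi\in C_c^\infty(\Omega)$ with $\int\phi\bar\psi\,\pi\neq0$ and passes to the limit in the weak formulation. Convergence of $A\int\phi_n\,v\cdot\nabla\bar\psi\,\pi$ is where $v\in L^2(\pi)$ is implicitly used. This forces $\lambda_n$ to converge and makes $\phi$ a weak eigenfunction, and the paper then tests with $\phi$ itself ``after a standard approximation''. You instead use the bounded injective inverse $K$. The relation $K\phi_n=\phi_n/\lambda_n$ rules out $|\lambda_n|\to\infty$, and in the limit $K\phi=\phi/\lambda$. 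So $\phi\in D(\cL_A)$ is a genuine eigenfunction, and $\operatorname{Re}\lambda=\lim\operatorname{Re}\lambda_n=m_A$ by continuity.

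Your route is cleaner. It skips the weak-to-strong identification and the approximation step, which is delicate when $v$ is merely in $L^2(\pi)$, since $\int(v\cdot\nabla\phi)\bar\phi\,\pi$ need not be defined for $\phi\in H^1(\pi)$. The price is that everything rests on the existence of $K$ on $\cY$. Your Lax--Milgram sketch does not supply this under $v\in L^2(\pi)$ alone, because the drift part of the form is not bounded on $H^1(\pi)\times H^1(\pi)$ in general. However, the paper asserts compactness of $\cL_A^{-1}$ on $\cY$ as a standing fact just before the lemma, so you use nothing the paper does not.

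One shared tacit premise is worth flagging: both proofs assume $\Lambda_A\neq\emptyset$. This does not follow from compactness of $K$ and $\cY\neq\{0\}$, as your parenthetical suggests; the Volterra operator is compact and injective yet has no eigenvalues. The paper's proof makes the same assumption.
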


\begin{proof}
Recall that $m_A = \inf\{\operatorname{Re}(\lambda) : \lambda \in \Lambda_A\}$.  Choose a sequence of eigenpairs $(\lambda^{(k)},\phi^{(k)})$ with
$\|\phi^{(k)}\|_{L^2(\pi)} = 1$ and $\operatorname{Re}(\lambda^{(k)}) \to m_A$.  For each $k$, testing the eigenvalue equation
\[
-\cL_0 \phi^{(k)} + A v \cdot \nabla \phi^{(k)} = \lambda^{(k)} \phi^{(k)}
\]
against $\overline{\phi^{(k)}}$ in $L^2(\pi)$ gives, after integration by parts,
\[
\int |\nabla \phi^{(k)}|^2 \,\pi(dx) + A \int (v \cdot \nabla \phi^{(k)}) \overline{\phi^{(k)}} \,\pi(dx)
= \lambda^{(k)} .
\]
Taking real parts shows $\operatorname{Re}(\lambda^{(k)}) = \int |\nabla \phi^{(k)}|^2 \,\pi(dx)$.
Thus $\{\phi^{(k)}\}$ is bounded in $H^1(\pi)$.  By the compact embedding $H^1(\pi) \hookrightarrow L^2(\pi)$,
we can take a subsequence (still denoted $\phi^{(k)}$) that converges weakly in $H^1(\pi)$
and strongly in $L^2(\pi)$ to some $\phi \in H^1(\pi)$ with $\|\phi\|_{L^2(\pi)} = 1$.
Weak lower semicontinuity of the norm gives
\[
\int |\nabla \phi|^2 \,\pi(dx) \le \liminf_{k\to\infty} \int |\nabla \phi^{(k)}|^2 \,\pi(dx) = m_A .
\]

Now take any $\psi\in C_c^\infty(\Omega)$ with $\int \phi \overline{\psi}\,\pi(dx)\neq0$
(such a $\psi$ exists because $\phi\not\equiv0$).  For all large $k$ we also have
$\int \phi^{(k)}\overline{\psi}\,\pi(dx)\neq0$.  Writing the weak formulation
\[
\int \nabla \phi^{(k)}\!\cdot\!\nabla\overline{\psi}\,\pi(dx)
- A\int \phi^{(k)}(v\cdot\nabla\overline{\psi})\,\pi(dx)
= \lambda^{(k)}\int \phi^{(k)}\overline{\psi}\,\pi(dx),
\]
the left-hand side converges.  Because the integrals $\int \phi^{(k)}\overline{\psi}\,\pi(dx)$
converge to a nonzero limit, the sequence $\lambda^{(k)}$ must converge to some $\lambda$.
Passing to the limit in the weak formulation shows that $-\cL_0\phi + A v\cdot\nabla\phi = \lambda\phi$
in the weak sense.  Testing with $\psi=\phi$ (after a standard approximation) gives
$\operatorname{Re}(\lambda) = \int|\nabla\phi|^2\pi(dx) \le m_A$.
Since $m_A$ is the infimum of real parts, we obtain $\operatorname{Re}(\lambda)=m_A$,
and $\phi$ is a principal eigenpair.
\end{proof}

In \cite{franke2010behavior}, it has been proved that the spectral gap tends to $r(v)$ as $A\to\infty$ for the problem on a compact manifold with flat measure. In fact, the argument can apply to the operator $\cL_A$ on $\cY$. In particular, one has the following claim corresponding to \cite[Theorem 2]{franke2010behavior}.
\begin{proposition}\label{pro:continuityeigenval}
Suppose $\dot{\mathcal{I}}(v)$ contains a nonzero element and the eigenfunction corresponding to the minimizer for $r(v)$ is $i\mu$. Then, for any $\epsilon>0$, there is $A_{\epsilon}>0$ such that for all $A>A_{\epsilon}$, $\cL_A=\cL_0-Av\cdot\nabla$
has an eigenvalue in the ball $B(-r(v)-iA\mu, \epsilon)$.
\end{proposition}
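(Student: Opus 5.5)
Let $\phi_0\in\dot{\mathcal{I}}(v)$ be the minimizer for $r(v)$ given by the preceding lemma, normalized by $\|\phi_0\|_{L^2(\pi)}=1$, with $v\cdot\nabla\phi_0=i\mu\phi_0$. The plan is a perturbation argument in the spirit of \cite[Theorem 2]{franke2010behavior}. We restrict the large antisymmetric part $Av\cdot\nabla$ to a fixed spectral subspace. On that subspace it acts as the scalar $i\mu$. Compactness and a resolvent bound then keep the remaining eigenvalue near $-r(v)-iA\mu$.

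\emph{Step 1 (the eigenspace and a projection).} Let $E_\mu=\{\phi\in H^1(\pi)\cap\cY: v\cdot\nabla\phi=i\mu\phi\}$. It is a closed subspace of $H^1(\pi)$ containing $\phi_0$. On $E_\mu$ consider the quadratic form $\int|\nabla\phi|^2\pi$. By the compact embedding $H^1(\pi)\hookrightarrow L^2(\pi)$, the associated self-adjoint operator $T$ on $\overline{E_\mu}^{L^2}$ has compact resolvent. Its lowest eigenvalue is exactly $r_\mu:=\min_{E_\mu}$ of the Rayleigh quotient. By choice of $\phi_0$ this equals $r(v)$. Let $P$ be the $L^2(\pi)$-orthogonal projection onto $\overline{E_\mu}$. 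The intended limit statement is that $(-\cL_A-iA\mu-z)^{-1}$ converges strongly, as $A\to\infty$, to $(T-z)^{-1}P$ for $z$ away from the spectrum of $T$. For this it is convenient to work with $\tilde{\cL}_A:=-\cL_A-iA\mu=-\cL_0+A(v\cdot\nabla-i\mu)$.

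\emph{Step 2 (resolvent convergence).} Fix $z\notin\sigma(T)$ with, say, $\mathrm{Re}\,z<\kappa$ or $z$ on a small circle $\Gamma$ around $r(v)$ containing no other point of $\sigma(T)$. For $f\in\cY$ set $u_A=(\tilde{\cL}_A-z)^{-1}f$.
\begin{itemize}
\item First obtain a uniform bound $\|u_A\|_{H^1(\pi)}\le C_\Gamma\|f\|$ for $z\in\Gamma$. I would prove it by contradiction. If $\|u_A\|_{L^2}=1$ with $\|(\tilde{\cL}_A-z)u_A\|\to0$, then taking real parts bounds $\int|\nabla u_A|^2\pi$. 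Taking imaginary parts and dividing by $A$ shows that $(v\cdot\nabla-i\mu)u_A\to0$ in $H^{-1}$. A weak limit $u$ then lies in $E_\mu$, is nonzero by compactness, and satisfies $(T-z)u=0$ (test against elements of $E_\mu$, on which the $A$-term vanishes), contradicting $z\notin\sigma(T)$.
\item The same argument, without normalization, identifies the limit: every weak $H^1$ limit of $u_A$ lies in $E_\mu$ and solves $(T-z)u=Pf$. Hence $u_A\to(T-z)^{-1}Pf$ strongly in $L^2(\pi)$.
\end{itemize}

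\emph{Step 3 (eigenvalue from Riesz projections).} Define $\Pi_A=\frac{1}{2\pi i}\oint_\Gamma(z-\tilde{\cL}_A)^{-1}dz$, which is well-defined for large $A$ by the uniform bound. By dominated convergence on $\Gamma$, $\Pi_A\phi_0\to\Pi\phi_0=\phi_0\neq0$, where $\Pi$ is the spectral projection of $T$. Thus $\Pi_A\neq0$ for large $A$, so $\tilde{\cL}_A$ has an eigenvalue inside $\Gamma$; the spectrum is discrete since $\cL_A^{-1}$ is compact. Taking $\Gamma$ of radius $\epsilon$ gives an eigenvalue of $-\cL_A$ in $B(r(v)+iA\mu,\epsilon)$, i.e.\ of $\cL_A$ in $B(-r(v)-iA\mu,\epsilon)$.

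\textbf{Main obstacle.} The main obstacle is the uniform resolvent bound on $\Gamma$, specifically the passage to the limit in the term $A(v\cdot\nabla-i\mu)u_A$. This needs $v\cdot\nabla\psi\in L^2(\pi)$ for test functions $\psi$, and it needs $v\cdot\nabla$ to be skew-adjoint on $H^1(\pi)$, which is where the condition $\nabla\cdot(v\pi)=0$ is used. A further delicate point is that $\Gamma$ must avoid every point of $\sigma(T)$. Possible eigenvalues of $T$ with real part below $r(v)$ are excluded by minimality. If $T$ has other eigenvalues near $r(v)$, $\epsilon$ is simply shrunk, which is harmless since only existence of an eigenvalue in the ball is claimed.
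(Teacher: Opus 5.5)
Your argument is correct, and it is essentially the route the paper intends. The paper gives no proof of its own; it only asserts that the argument of \cite[Theorem 2]{franke2010behavior} transfers to $\cL_A$ on $\cY$, and your proof (strong resolvent convergence of $-\cL_A-iA\mu$ to the compression $T$ of the Dirichlet form onto the $i\mu$-eigenspace, then convergence of the Riesz projections applied to $\phi_0$) is a clean self-contained version of that transfer, whose extra inputs (skew-symmetry of $v\cdot\nabla$ on $H^1(\pi)$, $v\cdot\nabla\psi\in L^2(\pi)$ for test functions, e.g.\ $v$ bounded with $\nabla\cdot(\pi v)=0$, and compact resolvent on $\cY$) are the same ones the paper already uses implicitly in Lemma~\ref{lmm:principal} and Theorem~\ref{thm:REA}.
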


Clearly, with this observation, for any $\delta>0$, if the initial data is taken to the be corresponding eigenvalue, one has
\begin{gather}
\|\exp(t\cL_A)\phi_0\|_2 \ge \|\phi_0\|_2 \exp(-(r(v)-\delta)t).
\end{gather}
With the relation between the entropy and the $L^2$ norms for warm-start data, if the corresponding eigenfunction is bounded, one can construct an initial data such that the rate of the decay for the entropy cannot be improved
larger than $r(v)$.

However, following the ideas in Wei \cite{Wei18Mix}, we can show that $r(v)$ is in fact the sharp rate for decay.
For $\cL_A$ in \eqref{eq:LA}, one could introduce
\begin{gather}\label{eq:psiA}
\Psi_A:=\inf\{\|(\cL_A-i\lambda)f\|_{L^2(\pi)} : f\in D(\cL_A),  \lambda\in \R, \|f\|_{L^2(\pi)}=1 \}.
\end{gather}
Clearly, $\Psi_A\ge \kappa$. It has been shown in \cite{Wei18Mix} that the solution to the equation
\[
\partial_t\phi=\cL_A\phi
\]
has the following decay bound
\begin{gather}\label{eq:decaybound}
\|\phi(t)\|_{L^2(\pi)}\le \|\phi(0)\|_{L^2(\pi)}\exp(-\Psi_A t+\uppi/2),
\end{gather}
where $\uppi=3.14\cdots$ denotes the usual circle constant, to avoid confusion with the invariant measure $\pi$. This holds {\it for all $t\ge 0$}.
By analyzing the asymptotic behavior of $\Psi_A$, we have the following.
\begin{theorem}\label{thm:REA}
Consider a given flow $v$. Let
$\sigma:=\liminf_{A\to\infty}\Psi_A$. Then,
\begin{enumerate}[(i)]
\item $v$ is relaxation enhancing (or $\dot{\mathcal{I}}(v)=\emptyset$)
if and only if $\sigma=\infty$, and also if and only if $\lim_{A\to\infty}m_A=\infty$.

\item  If $\sigma<\infty$, then $\dot{\mathcal{I}}(v)$
is nonempty and 
\begin{gather}
\sigma=\lim_{A\to\infty}m_A=r(v).
\end{gather}
\end{enumerate}
Moreover, suppose $\sigma<\infty$ and $q_0$ satisfies Assumption \ref{ass:warmstart} with $q_0-1\in \cY$. Then, for any $\delta>0$, there is some $A_0$ such that whenever $A>A_0$, one has
\[
\cH(\rho(t)| \pi)=\int q \log q \pi dx \le C\int |h|^2\pi \,dx \le C_1\exp(-(\sigma-\delta) t),
\]
where $C_1$ depends only on the two constants in Assumption \ref{ass:warmstart}.
\end{theorem}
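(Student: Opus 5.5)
The plan is to establish three inequalities: $\sigma\le \liminf_A m_A$, $\limsup_A m_A\le r(v)$, and $r(v)\le\sigma$. Together they give part (ii). Part (i) then follows by the same arguments, combined with the characterization of relaxation enhancing flows from \cite{CKRZ08} recalled above. The entropy bound is a direct consequence of the decay bound \eqref{eq:decaybound} and the warm-start equivalence from Section~\ref{sec:warmup}.

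\emph{Step 1: $m_A\ge \Psi_A$.} By Lemma~\ref{lmm:principal}, the principal eigenpair $(\lambda_A,\phi_A)$ exists. Write $\lambda_A=m_A+i\beta$ and normalize $\|\phi_A\|=1$. Then
\[
\|(\cL_A-i(-\beta))\phi_A\|=|{-\lambda_A}+i\beta|=m_A,
\]
so $\Psi_A\le m_A$. Taking $\liminf$ gives $\sigma\le\liminf m_A$.

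\emph{Step 2: $\limsup_A m_A\le r(v)$ when $\dot{\mathcal I}(v)\neq\emptyset$.} Proposition~\ref{pro:continuityeigenval} provides an eigenvalue of $-\cL_A$ with real part at most $r(v)+\epsilon$ for $A>A_\epsilon$. Hence $m_A\le r(v)+\epsilon$.

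\emph{Step 3, the main step: $\sigma\ge r(v)$.} This uses the resolvent argument of Wei/Franke. Argue by contradiction: suppose $A_n\to\infty$, $\lambda_n\in\R$, and $\|f_n\|=1$ with $\|(\cL_{A_n}-i\lambda_n)f_n\|\le M$, where $M<r(v)$ (or $M$ is finite when $\dot{\mathcal I}(v)=\emptyset$). Taking the real part of the inner product with $f_n$ and using the skew-symmetry of $v\cdot\nabla$ in $L^2(\pi)$ gives $\int|\nabla f_n|^2\pi\le M$. By compact embedding, pass to a subsequence with $f_n\to f$ strongly in $L^2(\pi)$, weakly in $H^1(\pi)$, where $\|f\|=1$, $f\in\cY$, and $\int|\nabla f|^2\pi\le M$. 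Dividing the equation by $A_n$ gives
\[
\|(v\cdot\nabla+i\lambda_n/A_n)f_n-\cL_0 f_n/A_n\|\le M/A_n.
\]
Test this against smooth $\psi$, and note $\cL_0 f_n/A_n\to0$ weakly. The ratios $\lambda_n/A_n$ stay bounded, since otherwise testing against a $\psi$ with $\langle f,\psi\rangle\neq0$ gives a contradiction. So along a subsequence $\lambda_n/A_n\to-\mu$, and $v\cdot\nabla f=i\mu f$ weakly, i.e.\ $f\in\dot{\mathcal I}(v)$. The assumption $v\in L^2(\pi)$ (or local boundedness of $v$) is needed to pass to the limit in $\int f_n v\cdot\nabla\bar\psi\,\pi$. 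The consequence $r(v)\le\int|\nabla f|^2\pi\le M$ contradicts $M<r(v)$. In the case $\dot{\mathcal I}(v)=\emptyset$ it contradicts $f\neq0$, so $\sigma=\infty$.

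The main obstacle is the sharpening of this argument. The bound $\int|\nabla f|^2\le M$ only uses the real part, which gives the sharp constant directly, so the contradiction already yields $\sigma\ge r(v)$. The remaining care is to justify the weak limit of the transport term and to check that $f\in\cY$ and $f\in D$ in the required weak sense.

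For part (i): if $\dot{\mathcal I}(v)=\emptyset$, Step 3 gives $\sigma=\infty$, and Step 1 gives $m_A\ge\Psi_A\to\infty$. Conversely, if $\sigma=\infty$, then \eqref{eq:decaybound} shows $v$ is relaxation enhancing, and hence $\dot{\mathcal I}(v)=\emptyset$ by the CKRZ characterization. If $m_A\to\infty$ while $\dot{\mathcal I}(v)\ne\emptyset$, Step 2 gives a contradiction. For part (ii): $\sigma<\infty$ forces $\dot{\mathcal I}(v)\neq\emptyset$ by (i), and Steps 1–3 give
\[
r(v)\le\sigma\le\liminf m_A\le\limsup m_A\le r(v).
\]

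Finally, by Step 3 with liminf, $\Psi_A\ge\sigma-\delta/2$ for $A>A_0$. Apply \eqref{eq:decaybound} to $h=q-1\in\cY$, which solves $\partial_th=\cL_Ah$. Then use Lemma~\ref{l:warmstart} and the Taylor bound $q\log q\le h+h^2/(2c_1)$, which gives $H(t)\le\frac{1}{2c_1}\|h(t)\|^2$. We have $\|h(0)\|^2\le (c_2-c_1)^2$, and $e^{\uppi}$ is absorbed into $C_1$. This gives decay at the rate $2(\sigma-\delta/2)\ge\sigma-\delta$.
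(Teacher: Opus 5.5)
Your proposal is correct and follows the paper's proof essentially step for step. You use $\Psi_A\le m_A$ via the principal eigenfunction, $\limsup_A m_A\le r(v)$ via Proposition~\ref{pro:continuityeigenval}, the compactness and weak-limit argument that produces an element of $\dot{\mathcal{I}}(v)$ with $\int|\nabla f|^2\pi\le\sigma$, and finally \eqref{eq:decaybound} combined with the warm-start comparison. The only deviations are minor: you obtain $\sigma=\infty\Rightarrow$ relaxation enhancing directly from the semigroup bound (plus the CKRZ characterization) rather than from Proposition~\ref{pro:continuityeigenval}, and your correctly oriented bound $H(t)\le\frac{1}{2c_1}\|h(t)\|^2$ with $\|h(0)\|^2\le(c_2-c_1)^2$ makes $C_1$ depend only on $c_1,c_2$ as the statement claims, which the paper's proof does not spell out.
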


\begin{proof}
Recall that $v$ is relaxation enhancing if and only if $\dot{\mathcal{I}}(v)=\emptyset$.
If $\dot{\mathcal{I}}(v)\neq\emptyset$, then Proposition~\ref{pro:continuityeigenval}
provides an eigenvalue of $-\cL_A$ with real part at most $r(v)+\varepsilon$ for all large $A$, hence
$\limsup_{A\to\infty} m_A\le r(v)<\infty$.  Taking $f=\phi_A$ in the expression on the right hand side of \eqref{eq:psiA}, one obtains $\Psi_A\le m_A$. Hence, $\sigma<\infty$.
Thus $\sigma=\infty$ forces $\dot{\mathcal{I}}(v)=\emptyset$, i.e., relaxation enhancing.

Conversely, assume $\sigma<\infty$.  Choose sequences $A_k\to\infty$, $\lambda_k\in\mathbb{R}$,
and $f_k\in D(\cL_{A_k})$ with $\|f_k\|_{L^2(\pi)}=1$ such that
$\|(\cL_{A_k}-i\lambda_k)f_k\|_{L^2(\pi)}\le\sigma+1/k$.
Setting $g_k=(\cL_{A_k}-i\lambda_k)f_k$ and testing against $\overline{f_k}$ gives
$\int|\nabla f_k|^2\pi(dx)=\operatorname{Re}\langle g_k,f_k\rangle\le\sigma+1/k$;
hence $\{f_k\}$ is bounded in $H^1(\pi)$.  By compactness, a subsequence converges weakly
in $H^1(\pi)$ and strongly in $L^2(\pi)$ to $f_\infty\in H^1(\pi)$ with $\|f_\infty\|_{L^2(\pi)}=1$.
For any $\psi\in C_c^\infty(\Omega)$,
\[
\int\nabla f_k\!\cdot\!\nabla\overline\psi\,\pi(dx)
-A_k\int(v\cdot\nabla f_k)\overline\psi\,\pi(dx)
-i\lambda_k\int f_k\overline\psi\,\pi(dx)=\int g_k\overline\psi\,\pi(dx).
\]
Dividing by $A_k$ and taking limits shows that a subsequence of $\lambda_k/A_k$ converges
to some $\mu$ and $v\cdot\nabla f_\infty=-i\mu f_\infty$ in the weak sense, so
$f_\infty\in\dot{\mathcal{I}}(v)$.  Hence $\dot{\mathcal{I}}(v)\neq\emptyset$, proving that
$\sigma<\infty$ implies non-relaxation-enhancing.  The same argument with the principal
eigenfunctions shows that $\lim_{A\to\infty}m_A=\infty$ is also equivalent to relaxation enhancing.

Now suppose $\sigma<\infty$.  From the sequence above,
$\sigma\ge\int|\nabla f_\infty|^2\pi(dx)\ge r(v)$.
Taking $f=\phi_A$ in the definition of $\Psi_A$ yields $\Psi_A\le m_A$, so
$\sigma\le\liminf_{A\to\infty} m_A$.
Proposition~\ref{pro:continuityeigenval} gives $\limsup_{A\to\infty} m_A\le r(v)$,
and therefore $\sigma=\lim_{A\to\infty} m_A=r(v)$.

For the entropy decay when $\sigma<\infty$, fix $\delta>0$ and choose $A$ large enough
so that $\Psi_A\ge\sigma-\delta/2$.  The semigroup bound~\eqref{eq:decaybound} gives
$\|h(t)\|_{L^2(\pi)}\le\|h(0)\|_{L^2(\pi)}\exp(-(\sigma-\delta/2)t+\uppi/2)$
for any initial $h(0)=q(0)-1\in\cY$.
From the estimates in Section \ref{sec:warmup} for warm-start equivalence, we have
$c_1^{-1}H(t)\le\|h(t)\|_{L^2(\pi)}^2\le c_2^{-1}H(t)$, hence
$H(t)\le C\exp(-(\sigma-\delta)t)$ for a constant $C$ depending only on $c_1,c_2$
and the initial entropy.
\end{proof}

\section{Asymptotic relaxation enhancing flows}\label{section:examples}


As we have seen, building a single relaxation enhancing flow is often difficult and places strong restrictions on the velocity field. Motivated by the asymptotic rate result for flows that do have $H^1$ eigenfunctions, we now introduce a more flexible notion. We only consider flows with finite energy, meaning 
\[
\int |v_n|^2\pi dx < \infty,\quad \forall n.
\]

\begin{definition}
A sequence of flows $v_n$ with finite energies is called asymptotic relaxation enhancing if $r(v_n)\to\infty$.
\end{definition}

Clearly, one has
\begin{theorem}\label{thm:entropydecay_are}
Suppose that $q_0$ satisfies Assumption \ref{ass:warmstart} with $q_0-1\in \cY$. Let $\{v_n\}$ be a family of asymptotically relaxation enhancing with finite energy, then for any $B>0$, we can find $v_{n}$ in this family and $A_{n}>0$ such that for any $A>A_{n}$, the corresponding $\mathcal{H}(\rho(t)|\pi)$ satisfies
\[
\mathcal{H}(\rho(t)|\pi)\le C\exp(-Bt), \quad \forall t\ge 0.
\]
\end{theorem}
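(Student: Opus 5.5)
The plan is to reduce the statement to Theorem~\ref{thm:REA} applied to a single well-chosen member of the family. Fix $B>0$. Since $r(v_n)\to\infty$, choose $n$ with $r(v_n)\ge B+1$. Two cases arise for this $v_n$.

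\emph{Case 1: $\dot{\mathcal{I}}(v_n)=\emptyset$.} Then $v_n$ is relaxation enhancing, and by Theorem~\ref{thm:REA}(i), $\sigma_n:=\liminf_{A\to\infty}\Psi_A=\infty$. So there is $A_n$ with $\Psi_A\ge B+1$ for all $A>A_n$. (If a lim-inf bound is not literally uniform in $A$, the definition of $\liminf$ still supplies such an $A_n$, since $\liminf=\infty$ means $\Psi_A\to\infty$.)

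\emph{Case 2: $\dot{\mathcal{I}}(v_n)\neq\emptyset$.} Then part (ii) of Theorem~\ref{thm:REA} gives $\sigma_n=\lim_A m_A=r(v_n)\ge B+1$. Choose $A_n$ so that $\Psi_A\ge \sigma_n-\tfrac12\ge B+\tfrac12$ for $A>A_n$. Here I use that $\sigma_n$ is a $\liminf$, so eventually $\Psi_A\ge\sigma_n-\tfrac12$. The finite-energy assumption $\int|v_n|^2\pi<\infty$ is what makes Lemma~\ref{lmm:principal} and the spectral machinery applicable, and so legitimizes invoking Theorem~\ref{thm:REA} for $v_n$.

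In either case, for $A>A_n$ apply the semigroup bound \eqref{eq:decaybound} to $h=q-1\in\cY$. This uses that $\cY$ is invariant and that $h(0)=q_0-1\in\cY$, and gives
\[
\|h(t)\|_{L^2(\pi)}\le \|h(0)\|_{L^2(\pi)}e^{\uppi/2}e^{-(B+\frac12)t}.
\]
Then pass to entropy through the warm-start comparison of Section~\ref{sec:warmup}. Lemma~\ref{l:warmstart} keeps $c_1\le q\le c_2$ for all time, so the Taylor bounds give
\[
\mathcal{H}(\rho(t)|\pi)\le \frac{1}{2c_1}\|h(t)\|^2_{L^2(\pi)}
\]
(the linear term integrates to zero), while $\|h(0)\|_{L^2(\pi)}^2\le (c_2-c_1)^2$ or $\le 2c_2\mathcal{H}(\rho_0|\pi)$. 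Combining these yields $\mathcal{H}(\rho(t)|\pi)\le C e^{-(2B+1)t}\le Ce^{-Bt}$ for all $t\ge0$, with $C=e^{\uppi}(c_2-c_1)^2/(2c_1)$.

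The main obstacle is the step $\Psi_A\ge B$ for all large $A$. The paper defines $\sigma$ as a $\liminf$, so the positive statement "eventually $\Psi_A\ge\sigma-\delta$" follows directly from the definition. What really needs care is that Theorem~\ref{thm:REA} (specifically $\sigma=r(v_n)$, relying on Proposition~\ref{pro:continuityeigenval} and the compactness arguments) holds for each fixed $v_n$ under only the finite-energy hypothesis. I would check that the compactness and weak-limit arguments there only use $v_n\in L^2(\pi)$. Only the lower bound $\sigma\ge r(v_n)$ is actually needed, and that comes from the compactness argument in the proof of Theorem~\ref{thm:REA}.
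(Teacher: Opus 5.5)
Your proposal is correct and follows essentially the same route as the paper: choose $n$ with $r(v_n)\ge B+1$, invoke Theorem~\ref{thm:REA} to get $\Psi_A$ eventually above $B$, and convert the $L^2(\pi)$ decay \eqref{eq:decaybound} into entropy decay through the warm-start comparison. Two things you add are worthwhile refinements of the paper's terser argument, which applies the $\sigma<\infty$ bound directly to every member of the family: your separate treatment of the case $\dot{\mathcal{I}}(v_n)=\emptyset$ (where $r(v_n)=\infty$ and part (i) of Theorem~\ref{thm:REA} is needed), and your explicit constant $C=e^{\uppi}(c_2-c_1)^2/(2c_1)$.
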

\begin{proof}
By the asymptotic rate result, Theorem \ref{thm:REA} in Section~\ref{section:speedup}, for any flow $v$ and any $\delta>0$, there exists $A_0$ such that for $A>A_0$, $H(t)\le C\exp(-(r(v)-\delta)t)$. Since $r(v_n)\to\infty$, choose $n$ with $r(v_n)>B+1$ and take $\delta=1$. Then for large $A$, $H(t)\le C\exp(-Bt)$.
\end{proof}

As a corollary, one has
\begin{corollary}
Suppose the invariant measure $\pi$ satisfies a log-Sobolev inequality, then under the conditions in Theorem \ref{thm:entropydecay_are}, one has
\begin{gather}
W_1(\rho(t),\pi)\le W_2(\rho(t),\pi)\le C_1\exp(-Bt/2).
\end{gather}
\end{corollary}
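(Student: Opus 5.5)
The corollary follows by combining the entropy decay of Theorem \ref{thm:entropydecay_are} with the Talagrand transportation inequality recalled in Section \ref{section:setup}. The first step is to fix $B>0$ and apply Theorem \ref{thm:entropydecay_are}. This produces a flow $v_n$ in the asymptotically relaxation enhancing family and a threshold $A_n$. For every $A>A_n$ the solution $\rho(t)$ of the Fokker--Planck equation with drift $Av_n-\nabla U$ then satisfies
\[
\cH(\rho(t)|\pi)\le C\exp(-Bt)\quad\text{for all } t\ge 0,
\]
where $C$ depends only on $c_1,c_2$ from Assumption \ref{ass:warmstart} and on the initial entropy.

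The second step uses the log-Sobolev inequality \eqref{eq:LSI} for $\pi$, with constant $\lambda>0$. By the Talagrand/Otto--Villani inequality stated in the subsection on transportation inequalities, every $\mu\in\cP_2$ satisfies
\[
W_1(\mu,\pi)\le W_2(\mu,\pi)\le \sqrt{\tfrac{2}{\lambda}\cH(\mu|\pi)}.
\]
The first inequality is simply Jensen's inequality (equivalently, H\"older's inequality) applied to an optimal coupling. We apply this with $\mu=\rho(t)$.

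One point needs checking: $\rho(t)$ must have finite second moment, so that the Wasserstein distances are defined. Under the warm-start bound of Lemma \ref{l:warmstart}, $\rho(t)\le c_2\pi$ for every $t$. Moreover, a measure satisfying LSI has Gaussian-type tails (by Herbst's argument), so $\pi$ has all moments finite. Consequently $\rho(t)\in\cP_2$ for all $t\ge0$. On the torus this check is not needed, since every probability measure there has finite moments.

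Substituting the entropy bound gives
\[
W_2(\rho(t),\pi)\le \sqrt{2C/\lambda}\,\exp(-Bt/2).
\]
This is the claim with $C_1=\sqrt{2C/\lambda}$. The constant depends on $\lambda$, the warm-start constants, and the initial data, but not on $t$, $A$, or $n$. No step here poses a real obstacle. The only subtlety is that the LSI constant $\lambda$ enters through the unperturbed measure $\pi$, not through the flow, so it stays fixed as $A\to\infty$ and does not spoil the rate $B/2$.
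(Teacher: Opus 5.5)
Your proposal is correct and follows the paper's proof: you apply Theorem \ref{thm:entropydecay_are} to get $\cH(\rho(t)|\pi)\le C e^{-Bt}$, then use the Talagrand inequality $W_2\le\sqrt{2\cH/\lambda}$ together with $W_1\le W_2$, which gives $C_1=\sqrt{2C/\lambda}$. Your extra check that $\rho(t)\in\cP_2$, via $\rho(t)\le c_2\pi$ and the Gaussian tails of $\pi$ under LSI, is a harmless detail that the paper leaves implicit.
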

\begin{proof}
From Theorem \ref{thm:entropydecay_are}, we have $H(t) = \mathcal{H}(\rho(t)|\pi) \le C \exp(-Bt)$. Since $\pi$ satisfies a log-Sobolev inequality with constant $\lambda$, the Talagrand transportation inequality (stated in Section~\ref{section:setup}) gives $\mathcal{W}_2(\rho(t),\pi) \le \sqrt{\frac{2}{\lambda} \mathcal{H}(\rho(t)|\pi)}$. Combining these yields $\mathcal{W}_2(\rho(t),\pi) \le \sqrt{\frac{2C}{\lambda}} \exp(-Bt/2)$. The bound for $\mathcal{W}_1$ follows from the general inequality $\mathcal{W}_1 \le \mathcal{W}_2$, which holds for any measures with finite second moments by Jensen's inequality.
\end{proof}

Below, we explore some possible ways to obtain the asymptotic relaxation enhancing flows.

\subsection{Approximating relaxation enhancing flows}

We consider the case that $v_n$ converges to some relaxation enhancing flow $v$. We show that such a sequence is asymptotic relaxation enhancing.
We will have the following claim.
\begin{proposition}
Let $v$ be a relaxation enhancing flow, i.e., $\dot{\mathcal{I}}=\emptyset$. Suppose that $\{v_n\}$ is a sequence of flows with finite energies that converges to $v$ weakly in $L^2(\pi)$ and that
\begin{equation}\label{eq:bounded_advection}
\|v_n\cdot\nabla\phi\|_{L^2(\pi)}\le C\|\phi\|_{H^1(\pi)}
\end{equation}
Then,
\[
\lim_{n\to\infty}r(v_n)=\infty.
\]
\end{proposition}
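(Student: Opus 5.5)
Argue by contradiction. Suppose $r(v_n)\not\to\infty$. Then along a subsequence (not relabeled) $r(v_n)\le M<\infty$. In particular each $\dot{\mathcal{I}}(v_n)$ contains a nonzero element, so the earlier lemma says $r(v_n)$ is attained. This gives $\phi_n\in\dot{\mathcal{I}}(v_n)\subset\cY$ with
\[
\int|\phi_n|^2\pi\,dx=1,\qquad \int|\nabla\phi_n|^2\pi\,dx=r(v_n)\le M,\qquad v_n\cdot\nabla\phi_n=i\mu_n\phi_n,\quad \mu_n\in\R.
\]
So $\{\phi_n\}$ is bounded in $H^1(\pi)$. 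By the compact embedding $H^1(\pi)\hookrightarrow L^2(\pi)$ (Assumption \ref{ass:Ureg}), a further subsequence converges weakly in $H^1(\pi)$ and strongly in $L^2(\pi)$ to some $\phi\in H^1(\pi)$. The limit satisfies $\|\phi\|_{L^2(\pi)}=1$, and $\phi\in\cY$ because $\cY$ is closed.

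Next we bound the eigenvalues. Testing the eigen-relation against $\overline{\phi_n}\pi$ gives
\[
|\mu_n|=\Bigl|\int(v_n\cdot\nabla\phi_n)\overline{\phi_n}\,\pi\,dx\Bigr|\le\|v_n\cdot\nabla\phi_n\|_{L^2(\pi)}\le C\|\phi_n\|_{H^1(\pi)}\le C\sqrt{1+M},
\]
using \eqref{eq:bounded_advection}. Hence after extraction $\mu_n\to\mu\in\R$. By the same bound, $v_n\cdot\nabla\phi_n=i\mu_n\phi_n$ is bounded in $L^2(\pi)$, and it converges strongly to $i\mu\phi$.

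The main step is to identify the limit: we need $v\cdot\nabla\phi=i\mu\phi$ in the weak sense. Once that holds, $\phi$ is a nonzero element of $\dot{\mathcal{I}}(v)$, contradicting $\dot{\mathcal{I}}(v)=\emptyset$. This proves $\lim_{n\to\infty}r(v_n)=\infty$.

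For the limit passage, fix $\psi\in C_c^\infty(\Omega)$ and use the weak formulation. Since $\nabla\cdot(v_n\pi)=0$,
\[
\int(v_n\cdot\nabla\phi_n)\overline\psi\,\pi\,dx=-\int\phi_n\,(v_n\cdot\nabla\overline\psi)\,\pi\,dx .
\]
The right-hand side is a product of two weakly converging factors, so it needs care. Split it as
\[
\int(\phi_n-\phi)\,(v_n\cdot\nabla\overline\psi)\,\pi\,dx+\int\phi\,(v_n\cdot\nabla\overline\psi)\,\pi\,dx .
\]
\begin{itemize}
\item \emph{First term.} By \eqref{eq:bounded_advection} applied to $\psi$, $\|v_n\cdot\nabla\psi\|_{L^2(\pi)}\le C\|\psi\|_{H^1(\pi)}$ uniformly in $n$. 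Since $\phi_n\to\phi$ strongly in $L^2(\pi)$, this term tends to $0$.
\item \emph{Second term.} It equals $\int v_n\cdot(\phi\nabla\overline\psi)\,\pi\,dx$. This converges to $\int v\cdot(\phi\nabla\overline\psi)\,\pi\,dx$ provided $\phi\nabla\overline\psi\in L^2(\pi)$, by weak convergence of $v_n$ in $L^2(\pi)$.
\end{itemize}

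The expected obstacle is precisely this membership: $\phi\nabla\psi$ lies in $L^2(\pi)$ only if $\phi$ is locally bounded on $\operatorname{supp}\psi$, which an $H^1$ function need not be for $d\ge2$. To handle it, first show the identity for $\phi$ replaced by a truncation or mollification $\phi^\epsilon\in L^\infty_{loc}$. Then let $\epsilon\to0$, using the uniform bound $\sup_n\|v_n\|_{L^2(\pi)}<\infty$ (weakly convergent sequences are bounded) together with \eqref{eq:bounded_advection}, and control the error by $\|\phi-\phi^\epsilon\|$. Alternatively, test directly with $\phi_n$-independent functions and use a density argument, since \eqref{eq:bounded_advection} makes $v_n\cdot\nabla$ uniformly bounded from $H^1$ to $L^2$. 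Also, $\nabla\cdot(v\pi)=0$ passes to the weak limit, so the limiting weak formulation is consistent.

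Combining the two terms yields $-\int\phi\,(v\cdot\nabla\overline\psi)\,\pi\,dx=i\mu\int\phi\overline\psi\,\pi\,dx$ for all test functions $\psi$, which is the required weak eigen-relation.
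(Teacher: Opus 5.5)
Your proof is correct and follows the paper's own proof essentially step for step: contradiction, normalized eigenfunctions bounded in $H^1(\pi)$, compact embedding, the eigenvalue bound from \eqref{eq:bounded_advection}, and passage to the limit in the weak formulation, where your splitting is just the standard proof of the paper's ``strong times weak'' step. One correction: the obstacle you anticipate does not exist, because $\nabla\psi$ is bounded and compactly supported, so $|\phi\nabla\overline\psi|\le\|\nabla\psi\|_{L^\infty}|\phi|$ gives $\phi\nabla\overline\psi\in L^2(\pi)$ for every $\phi\in L^2(\pi)$, and the second term then converges directly by weak convergence of $v_n$, with no truncation or mollification needed.
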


\begin{proof}
We argue by contradiction. Suppose the conclusion does not hold. Then there exists a subsequence (still denoted by $v_n$) and a constant $M<\infty$ such that $r(v_n)\le M$ for all $n$. For each $v_n$, choose an eigenpair $(i\mu_n,\phi_n)$ with $\phi_n \in\dot{\mathcal{I}}(v_n)$, $\|\phi_n\|_{L^2(\pi)}=1$ and
\[
\|\nabla\phi_n\|_{L^2(\pi)}^2 \le r(v_n) + \frac{1}{n} \le M + 1.
\]
Thus $\{\phi_n\}$ is bounded in $H^1(\pi)$. By the compact embedding $H^1(\pi)\hookrightarrow L^2(\pi)$ (which holds under Assumption~\ref{ass:Ureg}), we may extract a subsequence (again denoted $\phi_n$) that converges weakly in $H^1(\pi)$ and strongly in $L^2(\pi)$ to some $\phi_\infty\in H^1(\pi)$ with $\|\phi_\infty\|_{L^2(\pi)}=1$.

Next, we bound the eigenvalues. Using the eigenfunction equation $v_n\cdot\nabla\phi_n = i\mu_n\phi_n$ and the hypothesis \eqref{eq:bounded_advection},
\[
|\mu_n| = \|\mu_n\phi_n\|_{L^2(\pi)} = \|v_n\cdot\nabla\phi_n\|_{L^2(\pi)} \le C\|\phi_n\|_{H^1(\pi)} \le C(M+2)^{1/2}.
\]
Hence $\{\mu_n\}$ is bounded in $\mathbb{R}$. Passing to a further subsequence, we may assume $\mu_n \to \mu$ for some $\mu\in\mathbb{R}$.

We now pass to the limit in the weak formulation of the eigenfunction equation. For any test function $\psi\in C_c^2(\Omega)$,
\[
i\mu_n\int_\Omega \phi_n\overline{\psi}\,\pi(dx) = \int_\Omega (v_n\cdot\nabla\phi_n)\overline{\psi}\,\pi(dx) = -\int_\Omega \phi_n\, v_n\cdot\nabla\overline{\psi}\,\pi(dx),
\]
where we used the weighted divergence-free condition $\nabla\cdot(\pi v_n)=0$ to integrate by parts. Since $\phi_n\to\phi_\infty$ strongly in $L^2(\pi)$ and $v_n\to v$ weakly in $L^2(\pi)$, we have $v_n\cdot\nabla\overline{\psi} \to v\cdot\nabla\overline{\psi}$ weakly in $L^2(\pi)$. The product of a strongly convergent sequence and a weakly convergent sequence converges in the sense of distributions, so the right-hand side converges to $-\int \phi_\infty\, v\cdot\nabla\overline{\psi}\,\pi(dx)$. Taking the limit $n\to\infty$, we obtain
\[
i\mu\int_\Omega \phi_\infty\overline{\psi}\,\pi(dx) = -\int_\Omega \phi_\infty\, v\cdot\nabla\overline{\psi}\,\pi(dx) = \int_\Omega (v\cdot\nabla\phi_\infty)\overline{\psi}\,\pi(dx),
\]
where the last equality follows from another integration by parts (valid since $\phi_\infty\in H^1(\pi)$ and $v$ is smooth). Since $\psi$ is arbitrary, we conclude that $v\cdot\nabla\phi_\infty = i\mu\phi_\infty$ in the weak sense, and thus $\phi_\infty\in\dot{\mathcal{I}}(v)$. But $v$ is relaxation enhancing, so $\dot{\mathcal{I}}(v)=\varnothing$. This contradiction completes the proof.
\end{proof}

Approximation of a relaxation enhancing flow is a possible way to get asymptotic relaxation enhancing sequences on compact manifolds. We next give a different construction on the torus that uses small scales.

\subsection{Scaling to small scales}


Besides approximating relaxation enhancing flows as in the previous section, one can also take advantage of small scales. Intuitively, if the velocity field changes rapidly in space, it creates small scale features and helps mixing. One expects that such small scales can be used to build asymptotic relaxation enhancing flows. In this section, we construct an explicit example by rescaling.

We work on the torus $\mathbb{T}^2=[0,1]^2$ with the flat measure $\pi=1$, and consider the space of all mean-zero functions, i.e., $\cY=\cX$ (see Section~\ref{section:speedup}, formula \eqref{eq:mzspace}\eqref{eq:mzsspace} for the definitions).  Consider the streamfunction
\begin{gather}
\Psi_n=\sin(2\uppi x n)\cos(2\uppi y n),
\end{gather}
where again $\uppi=3.14\cdots$ is the circumference ratio.
Let $v_n$ be the cellular flow
\begin{gather}
v_n=\nabla^{\perp}\Psi_n.
\end{gather}
For any function $f$ that is constant on the streamlines $S_C=\{(x,y):\Psi_n(x,y)=C\}$ with $C\in[-1,1]$, we have $v_n\cdot\nabla f=0$. Hence $f$ is an eigenfunction with eigenvalue zero, and the set $\dot{\mathcal{I}}(v_n)$ defined in \eqref{eigenspace} is nonempty. The following proposition shows that the rate $r(v_n)$ grows like $n^2$.
\begin{proposition}\label{pro:asymCell}
    For $v_n=\nabla^{\perp}\Psi_n$, we have 
    \begin{gather}
    r(v_n)\geq n^2 \delta,
    \end{gather}
     for some $\delta>0$ independent of $n$.
\end{proposition}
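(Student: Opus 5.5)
The plan is to reduce every element of $\dot{\mathcal{I}}(v_n)$ to a real first integral of $v_n$ that vanishes on the separatrix grid, and then apply a Dirichlet Poincar\'e inequality on each cell of side $1/(2n)$. This gives the $n^2$ scaling directly. The constant should come out as roughly $\delta = 8\uppi^2$.

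\emph{Step 1 (cell structure).} The streamfunction $\Psi_n$ vanishes exactly on the lines $x = k/(2n)$ and $y = (2j+1)/(4n)$. These lines form a connected separatrix network $\Gamma_n$ and cut $\mathbb{T}^2$ into $4n^2$ open squares $Q$ of side $1/(2n)$. Inside each $Q$ the streamlines are closed curves $\{\Psi_n = c\}$, $c \in (0,1]$ or $[-1,0)$, around a single elliptic point. The hyperbolic stagnation points of $\Gamma_n$, where $v_n = 0$, are the grid intersections.

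\emph{Step 2 (first integrals).} Let $f \in H^1(\mathbb{T}^2)$ be real with $v_n\cdot\nabla f = 0$ a.e.
\begin{itemize}
\item Inside each cell, pass to coordinates $(\Psi_n,\theta)$ with $\theta$ the angle along the closed orbits. Then $\partial_\theta f = 0$, so $f = F_Q(\Psi_n)$ with $F_Q$ locally absolutely continuous on $(0,1)$.
\item By the trace theorem, $f|_{\partial Q}$ is the constant $\lim_{c\to 0} F_Q(c)$.
\item Adjacent cells share an edge of $\Gamma_n$, and the two traces of an $H^1$ function agree on that edge. Since $\Gamma_n$ is connected, all these constants equal a single value $c_0$.
\end{itemize}
Hence $f - c_0 \in H^1_0(Q)$ for every cell $Q$. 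The Dirichlet Poincar\'e inequality on a square of side $\ell = 1/(2n)$, with first eigenvalue $2\uppi^2/\ell^2 = 8\uppi^2 n^2$, gives after summing over cells
\[
\int |\nabla f|^2 \ge 8\uppi^2 n^2 \int |f - c_0|^2 .
\]
If $\int f = 0$, then $\int |f-c_0|^2 \ge \int |f|^2$, which settles the eigenvalue-zero case.

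\emph{Step 3 (nonzero eigenvalues).} Let $\phi \in \dot{\mathcal{I}}(v_n)$ with $v_n\cdot\nabla\phi = i\mu\phi$ and $\mu \neq 0$. As noted after \eqref{eigenspace}, $|\phi|$ is a real $H^1$ first integral, and $\int |\nabla|\phi||^2 \le \int |\nabla\phi|^2$ by the diamagnetic inequality. Step 2 applies to $|\phi|$ with some constant $c_0$ on $\Gamma_n$; I claim $c_0 = 0$. Along each edge of $\Gamma_n$, $\phi$ solves the ODE $\dot\phi = i\mu\phi$ in the flow time. A trajectory reaches a hyperbolic point only in infinite time, and $\phi$ rotates with constant speed $\mu$ along it. An $H^1$ trace with $|\phi| = c_0 > 0$ would therefore be a unimodular function oscillating infinitely often near each stagnation point, with $H^{1/2}$ seminorm diverging there. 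This contradicts $\phi \in H^1$, so $c_0 = 0$.

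A cleaner alternative for Step 3 is the period argument inside the cells. The orbit period $T(c)$ is strictly monotone in $c$, so $\mu T(c) \in 2\uppi\mathbb{Z}$ holds only for a discrete set of $c$. Hence $|\phi| = 0$ a.e. and $\dot{\mathcal{I}}(v_n)$ contains no nonzero $\mu \neq 0$ eigenfunctions at all. I would try this version first, since it only needs monotonicity of the period function of the cellular flow, which is classical. Either way, $|\phi| \in H^1_0(Q)$ on each cell, and
\[
\int |\nabla\phi|^2 \ge \int |\nabla|\phi||^2 \ge 8\uppi^2 n^2 \int |\phi|^2 ,
\]
so $r(v_n) \ge 8\uppi^2 n^2$.

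\emph{Main obstacle.} The delicate part is the rigorous trace argument: showing that an $H^1$ first integral takes a single constant value on the whole network $\Gamma_n$, including near the hyperbolic corners where the action-angle coordinates degenerate. I would handle this with a coarea and trace argument on each cell, matching traces across shared edges. Connectivity of $\Gamma_n$ then propagates the constant.
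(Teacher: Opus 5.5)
Your proof is correct, and it takes a genuinely different and considerably shorter route than the paper's.

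\textbf{The paper's route.} It works throughout with the polar form $\phi=\Phi e^{i\theta}$ and splits on whether the common separatrix value $a_\phi$ of $\Phi=|\phi|$ is far from or close to $1$. In the first case it applies the cellwise Dirichlet Poincar\'e inequality to $\Phi-a_\phi$. In the second case it first shows $\mu=0$ by a period argument, using that $\Phi$ stays near $1$ close to the separatrices. It then treats the phase $\theta$ as a first integral and extracts the $n^2$ bound from $\int\Phi\cos\theta=0$ by counting cells. This requires auxiliary parameters $\delta_1,\delta_2,\delta_3,\sigma_1$ and yields a non-explicit constant.

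\textbf{Your route.} You dispose of $\mu\neq0$ entirely. The period obstruction on every closed orbit forces such eigenfunctions to vanish a.e., since the period is a constant multiple of $K(\sqrt{1-c^2})$ and hence strictly monotone. Real-analyticity together with blow-up at the separatrix would already suffice. For $\mu=0$ you exploit linearity, so the unknown boundary constant drops out through $\int|f-c_0|^2=\int f^2+c_0^2$. Do say explicitly that Step 2 is applied to $\operatorname{Re}\phi$ and $\operatorname{Im}\phi$ separately, each being a real mean-zero first integral.

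\textbf{What your route buys.} It gives $\delta=8\uppi^2$, which is sharp: $\Psi_n$ itself lies in $\dot{\mathcal{I}}(v_n)$ with Rayleigh quotient exactly $8\uppi^2n^2$. It is also essentially the argument the paper later uses in the full-space theorem, namely vanishing of eigenfunctions with $\mu\neq0$ on each cell, and removal of the common boundary value via the mean-zero condition. Your Step 1 also places the separatrices correctly at $y=(2j+1)/(4n)$. By contrast, the paper's cells $C_{i,j}$ are shifted by $1/(4n)$ in $y$ relative to the zero set of $\Psi_n$. In addition, the period blows up at the separatrix, not at the cell centre as the paper states.

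\textbf{Two points to tidy.}
\begin{enumerate}
\item The limit $\lim_{c\to0}F_Q(c)$ exists by the coarea identity $\int_Q|\nabla f|^2=\int|F_Q'(c)|^2J(c)\,dc$, where $J(c)=\oint_{\{\Psi_n=c\}}|\nabla\Psi_n|\,ds$ is bounded below near $c=0$.
\item Your first version of Step 3, the $H^{1/2}$ oscillation argument, is only heuristic as written, so use the period version.
\end{enumerate}
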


\begin{remark}
    By applying Proposition~\ref{prop:from1to2} to the cellular flows $v_n$, we obtain a family of asymptotic relaxation enhancing flows for any smooth target density $\pi dxdy$ on $\mathbb{T}^2$.
\end{remark}

To prove the proposition, we first establish a elementary fact. In the following, we will introduce the small cell
\begin{equation}\label{eq:cellij}
C_{i,j}=\left[\frac{i-1}{2n},\frac{i}{2n}\right]\times \left[\frac{j-1}{2n},\frac{j}{2n}\right].
\end{equation}

\begin{lemma}
For $\sigma<1$, let $U_{\sigma}=\{(x, y): |\Psi_n|> 1-\sigma\}$ and $K_{\sigma}=\T^2\setminus U_{\sigma}$. Suppose $\varphi \in H^1$ is real-valued and satisfies
\[
v\cdot\nabla\varphi=0.
\]
Then $\varphi$ is continuous on $K_{\sigma}$, and on the boundaries of all cells $C_{i,j}$ it takes a common constant value.
\end{lemma}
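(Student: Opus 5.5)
The plan is to show that, away from the critical points of $\Psi_n$, $\varphi$ is a function of $\Psi_n$ alone, and then to use one-dimensional Sobolev embedding in the variable $\Psi_n$.

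\emph{Step 1 (local straightening).} Take any point $p$ with $\nabla\Psi_n(p)\neq0$; this excludes only the cell centres ($|\Psi_n|=1$) and the cell corners (the saddles). Near $p$ there are smooth local coordinates $(s,t)$ with $s=\Psi_n$ and $t$ a coordinate along the streamlines. In these coordinates $v_n=\nabla^\perp\Psi_n$ is a smooth nonvanishing multiple of $\partial_t$, with Jacobian bounded above and below on compact subsets.
\begin{itemize}
\item The weak equation $v_n\cdot\nabla\varphi=0$ becomes $\partial_t\varphi=0$ in the distributional sense.
\item Since $\varphi\in H^1$, it is $H^1$ in $(s,t)$ locally, so $\varphi(s,t)=g(s)$ a.e. with $g\in H^1_{loc}$ in one variable.
\item By the one-dimensional Sobolev embedding, $g$ is continuous.
\end{itemize}
Hence $\varphi$ has a continuous representative near $p$, which is constant on the local streamline pieces. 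Because the streamlines $\{\Psi_n=C\}$ inside a cell with $0<|C|<1$ are closed connected curves free of critical points, overlapping charts along each curve glue these pieces together. So on each open cell, minus the region $\{|\Psi_n|>1-\sigma\}$, we get $\varphi=g_{i,j}(\Psi_n)$ with $g_{i,j}$ continuous on the appropriate interval of values, namely $(0,1-\sigma]$ or $[-(1-\sigma),0)$.

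\emph{Step 2 (across the separatrices).} Apply Step 1 at a point $p$ in the interior of an edge shared by two adjacent cells, away from the corners. There $\Psi_n=0$ but $\nabla\Psi_n\neq0$, since $\Psi_n$ changes sign linearly across the edge. The chart $(s,t)$ therefore straddles the edge, with $s<0$ on one side and $s>0$ on the other. The function $g$ obtained there is a single $H^1$ function of $s$ across $s=0$, and so it is continuous at $0$. Two consequences follow:
\begin{itemize}
\item $g_{i,j}$ and $g_{i',j'}$ extend continuously to $s=0$, and their values there agree for neighbouring cells.
\item $\varphi$ is constant along each open edge, since each open edge is one streamline.
\end{itemize}
The edges of the grid form a connected set, and adjacent edges meet at corners where the one-sided limits coincide by continuity. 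Therefore all edges carry one common value $c$.

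\emph{Step 3 (continuity on $K_\sigma$, including the corners).} On $K_\sigma$ we now have $\varphi=G(\Psi_n)$ cellwise, where each $G=g_{i,j}$ is continuous on its closed interval and $g_{i,j}(0)=c$ for all cells. At a corner $z_0$, $\Psi_n(z)\to0$ as $z\to z_0$ from any of the four neighbouring cells, so $\varphi(z)\to c$. This gives continuity at the saddles without any chart there, and continuity on the whole of $K_\sigma$ follows.

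I expect the main obstacle to be the degeneracy at the saddle corners, where the straightening fails and a direct $H^1$ argument would be awkward. The plan deliberately avoids this by matching values only on open edges and then recovering the corners from the continuity of the $g_{i,j}$ at $0$. A secondary technical point is the rigorous passage from the weak equation $\partial_t\varphi=0$ in curvilinear coordinates to $\varphi=g(s)$ with $g\in H^1$. I would handle it by mollifying in $t$, or by applying Fubini to $\partial_s\varphi$ in the chart, where the weight $|\nabla\Psi_n|$ is bounded away from zero.
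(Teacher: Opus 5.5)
Your proposal is correct and follows essentially the same route as the paper. On each cell away from the centres you write $\varphi=g_{i,j}(\Psi_n)$ with $g_{i,j}$ continuous by one-dimensional Sobolev embedding. You then use a straightening chart at interior points of the edges (where $\Psi_n=0$ but $\nabla\Psi_n\neq0$) to extend the $g_{i,j}$ continuously to $0$ and match them across neighbouring cells, and you get the common value and continuity at the saddle corners from $\Psi_n\to0$.

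There are two minor differences. The paper obtains $g_{i,j}\in H^1$ from the coarea identity $\int|\nabla\varphi|^2\,dx\,dy=\int|g_{i,j}'(u)|^2J(u)\,du$ rather than by gluing local charts. Also, both arguments implicitly need $0<\sigma<1$, so that the cell centres are excluded from $K_\sigma$; for $\sigma\le 0$ the claimed continuity can fail at the centres.
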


\begin{proof}
The set $K_{\sigma}$ is obtained from $\T^2$ by removing small neighbourhoods of the cell centers, where $|\Psi_n| > 1-\sigma$. Thus $K_{\sigma}$ contains the interior of each cell up to distance $\sigma$ from the centre, as well as all cell boundaries, namely the curves where $\Psi_n = 0$.

On the interior of a cell $C_{i,j}$, the level sets of $\Psi_n$ are closed streamlines of $v_n = \nabla^\perp \Psi_n$. Because $v_n \cdot \nabla \varphi = 0$, the function $\varphi$ is constant along these streamlines. Consequently, on the part of $C_{i,j}$ that lies in $K_{\sigma}$ and away from the boundary we can write $\varphi(x,y) = g_{ij}(\Psi_n(x,y))$ for some function $g_{ij}$ defined on the open interval of values taken by $\Psi_n$ on that region. Since $\Psi_n$ has constant sign on each cell, this open interval is either $(0,1-\sigma)$ or $(-1+\sigma,0)$; we denote it by $I_{ij}$.

To study the regularity of $g_{ij}$, fix a compact subinterval $[a,b] \subset I_{ij}$. On the set $R = \{(x,y) \in C_{i,j} : a \le \Psi_n \le b\}$, which lies entirely in the interior of the cell, the gradient $|\nabla\Psi_n|$ is bounded away from zero. Introduce coordinates $(u,s)$ where $u = \Psi_n(x,y)$ and $s$ is the arclength along the level curve $\{\Psi_n = u\}$. The map $(x,y) \mapsto (u,s)$ is a smooth change of variables on $R$, and the area element satisfies $dxdy = |\nabla\Psi_n|^{-1} du\,ds$. Since $|\nabla\varphi| = |g_{ij}'(u)| \, |\nabla\Psi_n|$, we obtain
\[
\int_R |\nabla\varphi|^2 \,dxdy = \int_a^b |g_{ij}'(u)|^2 J(u) \,du,
\]
where $J(u) = \int_{\Psi_n = u} |\nabla\Psi_n| \,ds$. The function $J$ is smooth and positive on $(-1,1)$; the integral is well defined even at $u = 0$ because the corners of the cell boundaries are isolated points and do not affect the arclength measure. In particular, $J(u) \ge c > 0$ on $[a,b]$. Because $\varphi \in H^1(K_{\sigma})$, the left hand side is finite, forcing $\int_a^b |g_{ij}'(u)|^2 \,du < \infty$. Thus $g_{ij} \in H^1([a,b])$, and by the one dimensional Sobolev embedding, $g_{ij}$ is continuous on $[a,b]$. Since $[a,b]$ was arbitrary, $g_{ij}$ is continuous on the whole open interval $I_{ij}$.

Now consider a point $P_0$ on an open edge of the cell boundary, away from the corners. At $P_0$, $\Psi_n = 0$ and $\nabla\Psi_n \neq 0$. Choose a small neighbourhood $\mathcal{N}$ of $P_0$ contained in $K_{\sigma}$ on which $\nabla\Psi_n \neq 0$. On $\mathcal{N}$ we can introduce local coordinates $(u,t)$ where $u = \Psi_n(x,y)$ and $t$ is a coordinate along the level sets, for instance the arclength along $\{\Psi_n = 0\}$ extended to nearby level sets. The map $(x,y) \mapsto (u,t)$ is a smooth diffeomorphism onto a rectangle $(-\delta,\delta) \times I$. In these coordinates, $v_n = \nabla^\perp\Psi_n$ is tangent to the level sets $\{u = \text{const}\}$, so the equation $v_n \cdot \nabla \varphi = 0$ becomes $\partial_t \varphi = 0$. Hence $\varphi$ depends only on $u$: there exists a function $h$ on $(-\delta,\delta)$ such that $\varphi(x,y) = h(u)$ for all $(x,y) \in \mathcal{N}$. The $H^1$ regularity of $\varphi$ implies $h \in H^1((-\delta,\delta))$, and therefore $h$ is continuous on $(-\delta,\delta)$.

For $u \in (0,\delta)$ (respectively $(-\delta,0)$), the points in $\mathcal{N}$ with $\Psi_n = u$ lie in one of the adjacent cells, and there we already have $\varphi = g_{ij}(u)$ for the corresponding $g_{ij}$. Consequently, $h(u) = g_{ij}(u)$ for $u \neq 0$. This shows that each $g_{ij}$ admits a continuous extension to $u = 0$ given by $g_{ij}(0) = h(0)$. Moreover, the limits of $g_{ij}$ from the two sides of the edge coincide, so $\varphi$ is continuous across the open edge.

We have now shown that each $g_{ij}$ is continuous on the closed interval obtained by adding $0$ to $I_{ij}$. Because $\Psi_n$ is continuous and $\varphi = g_{ij}(\Psi_n)$ holds on $C_{i,j} \cap K_{\sigma}$, $\varphi$ can be extended to a continuous function on each closed cell intersected with $K_{\sigma}$. The continuity across open edges guarantees that $\varphi$ is continuous on the union of any two adjacent cells.  
It remains to check the corners. Consider a corner point where four cells meet. The four open edges that meet at the corner each carry a constant value of $\varphi$, given by the limits $g_{ij}(0)$ from the corresponding cells. Because $\varphi$ is continuous across each open edge, the values on the two edges that border a given cell must be equal; otherwise the limit from that cell would disagree with the value on one of its edges. Repeating this argument around the corner shows that all four limits $g_{ij}(0)$ are equal to a common value, which we denote by $a_0$. We can therefore extend $\varphi$ to the corner by setting $\varphi = a_0$ there.  Thus $\varphi$ is continuous on the entire set $K_{\sigma}$ and takes a single constant value $a$ on all cell boundaries.
\end{proof}

Next, we prove Proposition \ref{pro:asymCell}.
\begin{proof}[Proof of Proposition \ref{pro:asymCell}]
Let $\phi \in \dot{\mathcal{I}}(v_n)$ be a normalized eigenfunction, i.e., $\|\phi\|_{L^2}=1$ and $v_n\cdot\nabla\phi = i\mu\phi$ for some $\mu\in\mathbb{R}$. Write $\phi = \Phi e^{i\theta}$ with $\Phi = |\phi| \ge 0$ and $\theta$ real-valued (the phase $\theta$ may be multi-valued, but its gradient $\nabla\theta$ is well-defined almost everywhere). We have
\[
|\nabla\phi|^2 = |\nabla\Phi|^2 + \Phi^2 |\nabla\theta|^2,\qquad \int_{\T^2} \Phi^2\,dxdy = 1.
\]

Since $\phi \in H^1(\T^2)$ and the absolute value function is Lipschitz, the chain rule for Sobolev spaces gives $\Phi = |\phi| \in H^1(\T^2)$, and $|\nabla\Phi| \le |\nabla\phi|$ almost everywhere.

We now show that $\Phi$ is constant along the streamlines of $v_n$. Using $\Phi^2 = |\phi|^2$ and the eigenvalue equation,
\[
v_n\cdot\nabla(\Phi^2) = 2\operatorname{Re}\bigl(\bar{\phi}\, v_n\cdot\nabla\phi\bigr)
= 2\operatorname{Re}\bigl(i\mu|\phi|^2\bigr) = 0.
\]
Thus $\Phi^2$, and hence $\Phi$, is a first integral of the flow. By the lemma proved above, $\Phi$ is continuous on the cell boundaries (the curves where $\Psi_n = 0$ or $\Psi_n = \pm 1$) and takes a single constant value on all of them. Denote this common value by $a_\phi$.

We split the argument into two main cases.

\medskip
\noindent \textbf{Case 1: $|a_\phi - 1| \ge \delta_1$ for some fixed small $\delta_1>0$.}
The torus $\T^2$ is partitioned into $4n^2$ small cells
\[
C_{i,j} = \Bigl[ \frac{i-1}{2n}, \frac{i}{2n} \Bigr] \times \Bigl[ \frac{j-1}{2n}, \frac{j}{2n} \Bigr], \qquad i,j = 1,\dots,2n.
\]
On each cell $C_{i,j}$, the function $\Phi - a_\phi$ vanishes on the boundary because $\Phi = a_\phi$ on all cell boundaries.

We rescale each cell to the unit square $[0,1]^2$. Define the change of variables
\[
x = \frac{i-1+x'}{2n}, \quad y = \frac{j-1+y'}{2n}, \qquad (x',y') \in [0,1]^2,
\]
and set $G_{ij}(x',y') = \Phi(x,y) - a_\phi$. The Jacobian is $\frac{\partial(x,y)}{\partial(x',y')} = \frac{1}{4n^2}$, and the gradient scales as $\nabla_{x,y} = 2n \nabla_{x',y'}$. Since $\Phi \in H^1$ and $\Phi = a_\phi$ on the boundary, $G_{ij} \in H_0^1((0,1)^2)$.

We compute the $L^2$ norms on $C_{i,j}$:
\[
\int_{C_{i,j}} |\nabla\Phi|^2 \,dxdy
= \int_{[0,1]^2} \bigl( (2n)^2 |\nabla G_{ij}|^2 \bigr) \cdot \frac{1}{4n^2} \,dx'dy'
= \int_{[0,1]^2} |\nabla G_{ij}|^2 \,dx'dy',
\]
and
\[
\int_{C_{i,j}} |\Phi - a_\phi|^2 \,dxdy
= \int_{[0,1]^2} |G_{ij}|^2 \cdot \frac{1}{4n^2} \,dx'dy'
= \frac{1}{4n^2} \int_{[0,1]^2} |G_{ij}|^2 \,dx'dy'.
\]

The Poincar\'{e} inequality on the unit square for functions in $H_0^1((0,1)^2)$ states that there exists a constant $\kappa_0 > 0$ such that
\[
\int_{[0,1]^2} |\nabla G_{ij}|^2 \,dx'dy' \ge \kappa_0 \int_{[0,1]^2} |G_{ij}|^2 \,dx'dy'.
\]
Substituting the scaled integrals yields
\[
\int_{C_{i,j}} |\nabla\Phi|^2 \,dxdy \ge 4n^2 \kappa_0 \int_{C_{i,j}} |\Phi - a_\phi|^2 \,dxdy.
\]
If $\Phi$ is constant on $C_{i,j}$, both sides are zero and the inequality holds trivially. Summing over all $4n^2$ cells gives
\[
\int_{\T^2} |\nabla\Phi|^2 \,dxdy \ge 4n^2 \kappa_0 \int_{\T^2} |\Phi - a_\phi|^2 \,dxdy.
\]
Now, using $\int_{\T^2} \Phi^2 = 1$ and the assumption $|a_\phi - 1| \ge \delta_1$, we estimate $\int_{\T^2} |\Phi - a_\phi|^2 \,dxdy$.
We treat the two possibilities separately.

\emph{If $a_\phi \le 1 - \delta_1$:} For any $\epsilon > 0$, Young's inequality gives $2a_\phi\Phi \le \epsilon \Phi^2 + \frac{a_\phi^2}{\epsilon}$.
Hence
\[
|\Phi - a_\phi|^2 = \Phi^2 - 2a_\phi\Phi + a_\phi^2 \ge \Phi^2 - \epsilon \Phi^2 - \frac{a_\phi^2}{\epsilon} + a_\phi^2 = (1-\epsilon)\Phi^2 + \Bigl(1 - \frac{1}{\epsilon}\Bigr)a_\phi^2.
\]
Choose $\epsilon = 1 - \delta_1$ (note $\epsilon > 0$ since $\delta_1 < 1$), one thus has
\[
\int_{\T^2} |\Phi - a_\phi|^2 \,dxdy \ge \delta_1 - \frac{\delta_1}{1-\delta_1} a_\phi^2
\ge \delta_1 - \frac{\delta_1}{1-\delta_1} (1-\delta_1)^2 = \delta_1 - \delta_1(1-\delta_1) = \delta_1^2.
\]

\emph{If $a_\phi \ge 1 + \delta_1$:} We again apply Young's inequality, this time with $\epsilon = 1 + \delta_1$, and have
\[
|\Phi - a_\phi|^2 = \Phi^2 - 2a_\phi\Phi + a_\phi^2 \ge \Phi^2 - \epsilon \Phi^2 - \frac{a_\phi^2}{\epsilon} + a_\phi^2 = (1-\epsilon)\Phi^2 + \Bigl(1 - \frac{1}{\epsilon}\Bigr)a_\phi^2.
\]
With $\epsilon = 1 + \delta_1$, we get 
\[
\int_{\T^2} |\Phi - a_\phi|^2 \,dxdy \ge -\delta_1 + \frac{\delta_1}{1+\delta_1} a_\phi^2
 \ge -\delta_1 + \frac{\delta_1}{1+\delta_1} (1+\delta_1)^2 = \delta_1^2.
\]

In both situations, the desired estimate holds.
Since $|\nabla\phi|^2 \ge |\nabla\Phi|^2$, we conclude $\|\nabla\phi\|_{L^2}^2 \ge 4\kappa_0 \delta_1^2 n^2$.

\medskip
\noindent \textbf{Case 2: $|a_\phi - 1| < \delta_1$.}
We first prove that $\mu = 0$. Substituting $\phi = \Phi e^{i\theta}$ into the eigenvalue equation $v_n\cdot\nabla\phi = i\mu\phi$ gives
\[
\Phi\, v_n\cdot\nabla\theta = \mu\Phi.
\]
Although the phase $\theta$ may be multi-valued, the function $e^{i\theta}$ is single-valued (since $\phi$ is single-valued and $\Phi = |\phi|$), and the gradient $\nabla\theta$ is well-defined almost everywhere on the set where $\Phi > 0$. By the lemma, $\Phi$ is continuous on $K_\sigma$ (the region outside a small neighborhood of the cell centers) and equals the constant $a_\phi$ on all cell boundaries. Since $|a_\phi - 1| < \delta_1$, taking $\delta_1$ small ensures $a_\phi$ is close to $1$. By continuity, there exists $\sigma_0 > 0$ such that $\Phi(x,y) \ge \frac{1}{2}$ for all $(x,y) \in K_\sigma$ whenever $0 < \sigma < \sigma_0$. On this set we may divide by $\Phi$ to obtain
\[
v_n\cdot\nabla\theta = \mu.
\]

Now fix a cell $C_{i,j}$ and a streamline $\gamma$ of $\Psi_n$ lying in $C_{i,j} \cap K_\sigma$. The streamline is a closed curve, and $v_n = \nabla^\perp\Psi_n$ is tangent to it with speed $|v_n| = |\nabla\Psi_n|$. Parameterizing $\gamma$ by arclength $s$, the derivative of $\theta$ along $\gamma$ satisfies
\[
\frac{d\theta}{ds} = \frac{\mu}{|\nabla\Psi_n|}.
\]
Integrating along the entire closed streamline gives
\[
\oint_\gamma d\theta = \mu \int_0^L \frac{ds}{|\nabla\Psi_n|},
\]
where $L$ is the length of $\gamma$. Since $e^{i\theta}$ is single-valued, the integral $\oint_\gamma d\theta$ must be an integer multiple of $2\uppi$.

Now consider the quantity $I(c) = \int_{\Psi_n = c} \frac{ds}{|\nabla\Psi_n|}$, where $c$ is the constant value of $\Psi_n$ on the streamline. As the streamline approaches the cell center (i.e., as $c \to \pm 1$), the gradient $|\nabla\Psi_n|$ tends to zero, causing $I(c)$ to blow up. On the other hand, near the cell boundaries (where $c=0$ or $c=\pm 1$ between cells), $|\nabla\Psi_n|$ is bounded away from zero and $I(c)$ is finite. Hence $I(c)$ is not constant as a function of $c$. If $\mu \neq 0$, then $\mu I(c)$ would have to be an integer multiple of $2\pi$ for every $c$, which is impossible since $I(c)$ varies continuously and is not constant. Therefore we must have $\mu = 0$.

Consequently, $\theta$ is also a first integral of the flow.

Applying the same regularity argument as for $\Phi$, we conclude that $\theta$ is continuous on each $K_{\sigma}\cap \{\Phi>0\}$ and constant on cell boundaries. After a global phase shift, we may assume $\theta = 0$ on all cell boundaries. The mean-zero condition $\int_{\T^2} \phi \,dxdy = 0$ implies that both the real and imaginary parts vanish. In particular, taking the real part gives
\[
\int_{\T^2} \Phi \cos\theta \,dxdy = 0.
\]
(The imaginary part yields $\int \Phi \sin\theta dxdy = 0$, which is not needed for the estimates.)

Fix small positive numbers $\sigma_1, \delta_2, \delta_3$ (to be chosen appropriately). We consider two subcases.

\medskip
\noindent \textit{Subcase 2.1:} There are at least $\lceil \delta_2 4n^2 \rceil$ cells for which $\Phi$ takes a value outside the interval $[1-2\delta_1, 1+2\delta_1]$ at some point in $C_{i,j} \cap K_{\sigma_1}$.

Fix such a cell $C_{i,j}$. On this cell, $\Phi$ is constant along streamlines, so we can write $\Phi = g(\Psi_n)$ for some function $g$ defined on the range of $\Psi_n$ within the cell. Since $\Phi \in H^1$ and $|\nabla\Psi_n|$ is bounded away from zero on $K_{\sigma_1}$, $g$ belongs to $H^1$ on the corresponding interval, and we denote its weak derivative by $g'$. The assumption that $\Phi$ reaches a value at distance at least $2\delta_1$ from $1$ implies that $g$ must vary by at least $\delta_1$ over the interval of $\Psi_n$ values that lie in $K_{\sigma_1}$. Precisely, because $|a_\phi - 1| < \delta_1$, if $\Phi$ goes below $1-2\delta_1$ or above $1+2\delta_1$, then $|g(u) - a_\phi| \ge \delta_1$ for some $u \in [-1+\sigma_1, 1-\sigma_1]$. Hence
\[
\int_{-1+\sigma_1}^{1-\sigma_1} |g'(u)| \,du \ge \delta_1.
\]

Now we relate this variation to the $L^2$ norm of $\nabla\Phi$. In $K_{\sigma_1} \cap C_{i,j}$, we have $|\nabla\Phi| = |g'(\Psi_n)| \, |\nabla\Psi_n|$. Introduce coordinates $(u,s)$ where $u = \Psi_n(x,y)$ and $s$ is the arclength along the streamline $\Psi_n = u$. The change of variables has Jacobian $1/|\nabla\Psi_n|$, so
\[
dxdy = \frac{du \, ds}{|\nabla\Psi_n|}.
\]
Then
\[
\int_{K_{\sigma_1} \cap C_{i,j}} |\nabla\Phi|^2 \,dxdy 
= \int_{-1+\sigma_1}^{1-\sigma_1} |g'(u)|^2 \Bigl( \int_{\Psi_n=u} |\nabla\Psi_n| \,ds \Bigr) du.
\]
Define $J_n(u) = \int_{\Psi_n=u} |\nabla\Psi_n| \,ds$. By scaling, $J_n(u)$ is independent of $n$: indeed, the level set $\Psi_n = u$ is the $1/n$-scaled version of $\Psi_1 = u$, so $|\nabla\Psi_n| = n |\nabla\Psi_1|$ and $ds_n = (1/n) ds_1$, giving $J_n(u) = J_1(u)$. Thus $J_n(u)$ is a smooth, positive function of $u$ on $(-1,1)$. On the compact interval $[-1+\sigma_1, 1-\sigma_1]$, $J_n$ is continuous and positive, hence attains a positive minimum. Therefore there exist constants $c_1, c_2 > 0$ independent of $n$ such that $c_1 \le J_n(u) \le c_2$ for all $u \in [-1+\sigma_1, 1-\sigma_1]$. 

Applying the Cauchy-Schwarz inequality,
\[
\delta_1 \le \int_{-1+\sigma_1}^{1-\sigma_1} |g'(u)| \,du 
\le \Bigl( \int_{-1+\sigma_1}^{1-\sigma_1} |g'(u)|^2 J_n(u) \,du \Bigr)^{1/2}
\Bigl( \int_{-1+\sigma_1}^{1-\sigma_1} \frac{1}{J_n(u)} \,du \Bigr)^{1/2}.
\]
Since $J_n(u) \ge c_1$, the second factor is at most $C := \bigl( \frac{2}{c_1} \bigr)^{1/2}$. Squaring and rearranging gives
\[
\int_{-1+\sigma_1}^{1-\sigma_1} |g'(u)|^2 J_n(u) \,du \ge \frac{\delta_1^2}{C^2}.
\]
Therefore, for each such cell $C_{i,j}$, we obtain
\[
\int_{K_{\sigma_1} \cap C_{i,j}} |\nabla\Phi|^2 \,dxdy \ge c_1'' := \frac{\delta_1^2}{C^2} > 0,
\]
where $c_1''$ is independent of $n$.

Summing over the at least $\lceil \delta_2 4n^2 \rceil$ cells in this subcase, we get
\[
\int_{\T^2} |\nabla\Phi|^2 \ge c_1'' \delta_2 4n^2.
\]
Since $|\nabla\phi|^2 \ge |\nabla\Phi|^2$, the desired $O(n^2)$ lower bound follows.

\medskip
\noindent \textit{Subcase 2.2:} The situation in \textit{Subcase 2.1} does not happen. Then for at least $\lfloor (1-\delta_2)4n^2 \rfloor$ cells, the inequality $|\Phi(x,y) - 1| \le 2\delta_1$ holds for every point $(x,y)$ in $C_{i,j} \cap K_{\sigma_1}$. Let $\mathcal{C}_1$ be the collection of all such cells.

Define $S_1 = \{(x,y) : |\Phi(x,y) - 1| \le 2\delta_1\}$. We estimate the area of $S_1$. The total area of the torus is $1$. Each cell $C_{i,j}$ has area $1/(4n^2)$. The number of cells in $\mathcal{C}_1$ is at least $\lfloor (1-\delta_2)4n^2 \rfloor$, and since $\lfloor x \rfloor \ge x - 1$, this number is at least $(1-\delta_2)4n^2 - 1$. Therefore, the total area of the cells in $\mathcal{C}_1$ is at least
\[
\bigl( (1-\delta_2)4n^2 - 1 \bigr) \cdot \frac{1}{4n^2} = 1 - \delta_2 - \frac{1}{4n^2}.
\]
On each cell in $\mathcal{C}_1$, the inequality $|\Phi - 1| \le 2\delta_1$ holds on $C_{i,j} \cap K_{\sigma_1}$. The only points in these cells where the inequality may fail are those in $U_{\sigma_1}$, the small neighborhoods of the cell centers. The total area of $U_{\sigma_1}$ is $|U_{\sigma_1}|$. Cells not in $\mathcal{C}_1$ have total area at most $\delta_2 + \frac{1}{4n^2}$. Consequently, the area of $S_1$ satisfies
\[
|S_1| \ge 1 - \delta_2 - \frac{1}{4n^2} - |U_{\sigma_1}|.
\]

We now bound the integral of $\Phi$ over the set where $\Phi$ is large. Since $\|\phi\|_{L^2} = 1$, we have $\int_{\T^2} \Phi^2 = 1$. Splitting the integral gives
\[
1 = \int_{S_1} \Phi^2 \,dxdy + \int_{S_1^c} \Phi^2 \,dxdy.
\]
On $S_1$, we have $\Phi \ge 1-2\delta_1$, so $\Phi^2 \ge (1-2\delta_1)^2$ pointwise. Hence $\int_{S_1} \Phi^2 \ge (1-2\delta_1)^2 |S_1|$. On the complement $S_1^c$, we restrict to the subset where $\Phi \ge 1+2\delta_1$; on this subset, $\Phi^2 \ge (1+2\delta_1)\Phi$. Therefore,
\[
1 \ge (1-2\delta_1)^2 |S_1| + (1+2\delta_1) \int_{\Phi \ge 1+2\delta_1} \Phi \,dxdy.
\]
Rearranging, we obtain
\[
\int_{\Phi \ge 1+2\delta_1} \Phi \,dxdy \le \frac{1 - (1-2\delta_1)^2 |S_1|}{1+2\delta_1}.
\]

Let $S_2 = \{(x,y) : |\theta(x,y)| \le \delta_3\}$. Now we use the mean-zero condition $\int_{\T^2} \Phi \cos\theta \,dxdy = 0$. We split the integral into three parts:
\[
0 = \int_{S_1 \cap S_2} \Phi \cos\theta + \int_{S_1 \setminus S_2} \Phi \cos\theta + \int_{S_1^c} \Phi \cos\theta.
\]
On $S_1$, we have $|\Phi - 1| \le 2\delta_1$, so in particular $\Phi \le 1+2\delta_1$ and $\Phi \ge 1-2\delta_1$. On $S_1 \cap S_2$, we also have $|\theta| \le \delta_3$, so $\cos\theta \ge \cos\delta_3$. Therefore,
\[
\int_{S_1 \cap S_2} \Phi \cos\theta \ge (1-2\delta_1) \cos\delta_3 \, |S_1 \cap S_2|.
\]
On $S_1 \setminus S_2$, we have $\Phi \le 1+2\delta_1$ and $\cos\theta \ge -1$, hence
\[
\int_{S_1 \setminus S_2} \Phi \cos\theta \ge -(1+2\delta_1) \, |S_1 \setminus S_2|.
\]
On $S_1^c$, we use the triangle inequality:
\[
\left| \int_{S_1^c} \Phi \cos\theta \right| \le \int_{S_1^c} \Phi \,dxdy,
\]
since $\Phi=|\phi|\ge 0$. We bound the integral of $\Phi$ on $S_1^c$ by splitting it into the region where $\Phi \le 1-2\delta_1$ and the region where $\Phi \ge 1+2\delta_1$. On the first region, $\Phi \le 1-2\delta_1$, and its area is at most $|S_1^c| = 1 - |S_1|$. On the second region, we already have the estimate from the $L^2$ normalization:
\[
\int_{\Phi \ge 1+2\delta_1} \Phi \le \frac{1 - (1-2\delta_1)^2 |S_1|}{1+2\delta_1}.
\]
Therefore,
\[
\int_{S_1^c} \Phi \le (1-2\delta_1)(1 - |S_1|) + \frac{1 - (1-2\delta_1)^2 |S_1|}{1+2\delta_1}.
\]

Putting these together, the mean-zero condition gives
\[
(1-2\delta_1) \cos\delta_3 \, |S_1 \cap S_2| - (1+2\delta_1) |S_1 \setminus S_2| - \int_{S_1^c} \Phi \le 0.
\]
Note that $|S_1 \setminus S_2| = |S_1| - |S_1 \cap S_2|$. Substituting and rearranging, we obtain
\[
\bigl[ (1-2\delta_1) \cos\delta_3 + (1+2\delta_1) \bigr] |S_1 \cap S_2| \le (1+2\delta_1) |S_1| + \int_{S_1^c} \Phi.
\]

Now, using the estimate for $\int_{S_1^c} \Phi$ and the fact that $|S_1| \le 1$, $|S_1^c| = 1 - |S_1|$, we get
\[
|S_1 \cap S_2| \le \frac{(1+2\delta_1)|S_1| + (1-2\delta_1)(1-|S_1|) + \frac{1 - (1-2\delta_1)^2 |S_1|}{1+2\delta_1}}{(1-2\delta_1)\cos\delta_3 + (1+2\delta_1)}.
\]

Recall that $|S_1| \ge 1 - \delta_2 - \frac{1}{4n^2} - |U_{\sigma_1}|$. Taking $\sigma_1, \delta_2, \delta_3$ small and $n$ large, we can make $|S_1|$ arbitrarily close to $1$. In the limit $|S_1| \to 1$ and $\delta_1, \delta_3 \to 0$, the right-hand side tends to
\[
\frac{(1)(1) + 0 + \frac{1 - 1}{1}}{1 + 1} = \frac{1}{2}.
\]
Therefore, for sufficiently small choices of the parameters and large $n$, there exists a small $\epsilon_1' > 0$ such that
\[
|S_1 \cap S_2| \le \frac{1}{2} + \epsilon_1'.
\]

Now let $\mathcal{C}_2$ be the set of cells in $\mathcal{C}_1$ for which there exists a point in $C_{i,j} \cap K_{\sigma_1}$ with $|\theta| \ge \delta_3$. We need to bound the number of cells in $\mathcal{C}_1 \setminus \mathcal{C}_2$. For any cell in $\mathcal{C}_1 \setminus \mathcal{C}_2$, we have $|\theta| < \delta_3$ on $C_{i,j} \cap K_{\sigma_1}$, so this entire region is contained in $S_1 \cap S_2$. The area of $C_{i,j} \cap K_{\sigma_1}$ is at least $\frac{1}{4n^2} - |C_{i,j} \cap U_{\sigma_1}|$. Summing over all $N = |\mathcal{C}_1 \setminus \mathcal{C}_2|$ cells, we get
\[
|S_1 \cap S_2| \ge \frac{N}{4n^2} - \sum_{C\in \mathcal{C}_1\setminus\mathcal{C}_2} |C \cap U_{\sigma_1}| \ge \frac{N}{4n^2} - |U_{\sigma_1}|.
\]
Combining this with the upper bound $|S_1 \cap S_2| \le \frac{1}{2} + \epsilon_1'$, we obtain
\[
\frac{N}{4n^2} \le \frac{1}{2} + \epsilon_1' + |U_{\sigma_1}|.
\]
Multiplying by $4n^2$ gives
\[
N \le 2n^2 \bigl(1 + 2\epsilon_1' + 2|U_{\sigma_1}|\bigr) = 2n^2 (1 + \epsilon_1),
\]
where $\epsilon_1 = 2\epsilon_1' + 2|U_{\sigma_1}|$ can be made arbitrarily small by choosing $\sigma_1$ small and $n$ large.

Recall that $|\mathcal{C}_1| \ge (1-\delta_2)4n^2 - 1$. Therefore, the number of cells in $\mathcal{C}_2$ satisfies
\[
|\mathcal{C}_2| = |\mathcal{C}_1| - N \ge (1-\delta_2)4n^2 - 1 - 2n^2(1+\epsilon_1) = \lfloor (1 - \epsilon_1 - 2\delta_2)2n^2 \rfloor.
\]
On each cell in $\mathcal{C}_2$, we have $\Phi \ge 1-2\delta_1$ on $C_{i,j} \cap K_{\sigma_1}$ (since it belongs to $\mathcal{C}_1$) and $\theta$ has a variation of at least $\delta_3$.

For a cell in $\mathcal{C}_2$, a change of variables similar to the one used for $\Phi$ yields
\[
\int_{C_{i,j} \cap K_{\sigma_1}} |\nabla\theta|^2 \,dxdy \ge c_2',
\]
with $c_2' > 0$ independent of $n$. Since $\Phi \ge 1-2\delta_1$ on this region, we have
\[
|\nabla\phi|^2 \ge \Phi^2 |\nabla\theta|^2 \ge (1-2\delta_1)^2 |\nabla\theta|^2.
\]
Summing over all cells in $\mathcal{C}_2$ gives
\[
\int_{\T^2} |\nabla\phi|^2 \ge (1-2\delta_1)^2 c_2' \cdot \lfloor (1 - \epsilon_1 - 2\delta_2)2n^2 \rfloor\ge c_3 n^2
\]
for some constant $c_3 > 0$ independent of $n$.

\medskip
In all cases we have shown that $\|\nabla\phi\|_{L^2}^2 \ge \delta n^2$ for a universal constant $\delta > 0$ (independent of $n$ and the particular eigenfunction $\phi$). Since this holds for every normalized $\phi \in \dot{\mathcal{I}}(v_n)$, we conclude that $r(v_n) \ge \delta n^2$.
\end{proof}

\subsection{The full space}



As we have seen, building a relaxation enhancing flow on a compact manifold like $\T^2$ is already delicate. On the full space $\R^d$, the situation is even more challenging. For a weighted measure $\pi \propto e^{-U}$, any steady flow that is relaxation enhancing with respect to $\pi$ generally has infinite energy $\int_{\R^d} |v|^2 \pi(dx) = \infty$; this can be seen by analyzing the eigenfunctions of $v \cdot \nabla$, which must decay at infinity while supporting a non-zero eigenvalue, forcing the velocity field to grow unboundedly (see the discussion in \cite{zlatovs2010diffusion}). Instead of seeking a single flow with infinite energy, we construct a sequence of compactly supported flows with finite energy that are asymptotically relaxation enhancing.

The idea is to build each $v_n$ so that it agrees with a flow whose eigenfunctions have big $H^1$ norms on a large compact set and vanishes outside a slightly larger ball. For example, we can make use of the small scales of the cellular flows to construct $v_n$ on the compact set. 
For eigenfunctions whose mass is concentrated inside the compact set, the property of the flow forces a large $H^1(\pi)$ norm. For eigenfunctions supported mostly outside, we will heavily rely on the weighted measure. In fact, if a certain 
Lyapunov estimate holds, the  Poincare constant outside the compact set is big, which fulfills the requirement.

To make this precise, we first impose a growth condition on the potential $U$ that guarantees such a Lyapunov estimate.

\begin{assumption}[Lyapunov condition]\label{ass:lyap}
The potential $U$ admits a $C^2$ function $W\ge 1$ with the following property.
There exists a radius $R_0>0$ such that for any increasing sequence of radii $r_n\to\infty$ with $r_n\ge R_0$,
one can find a sequence $\lambda_n\to\infty$ and a nondecreasing sequence $b_n\ge 0$ satisfying
\begin{equation} \label{eq:lyapcond}
\cL_0 W(x) \le -\lambda_n W(x) + b_n \mathbf{1}_{B(0,r_n)}(x) \qquad \text{for all } x\in\mathbb{R}^d,
\end{equation}
where $\cL_0 = -\nabla U\cdot\nabla + \Delta$ is the operator defined in \eqref{eq:L0}. 
\end{assumption}
\begin{remark}
In the literature on Markov processes, a function $W$ with such a drift condition is often called a Lyapunov function.
\end{remark}

We note that this assumption is satisfied whenever the potential $U$ grows sufficiently fast at infinity. Suppose that $U$ is bounded below. Choose $W(x) = M e^{\delta U(x)}$ with $M >0$ such that $W\ge 1$ and a small constant $\delta \in (0,1)$. A direct computation gives
\[
\cL_0 W = \bigl( -(\delta - \delta^2)|\nabla U|^2 + \delta \Delta U \bigr) W.
\]
Define $F(x) = (\delta - \delta^2)|\nabla U(x)|^2 - \delta \Delta U(x)$. Assume that $F(x) \to +\infty$ as $|x|\to\infty$. Then, take $R>0$ such that $F(x)>0$ when $|x|\ge R$ and thus
\[
\lambda_n = \inf_{|x| \ge r_n} F(x)>0.
\]
Consequently, $\lambda_n \to \infty$ and 
\[
\cL_0 W(x) = -F(x) W(x) \le -\lambda_n W(x) \qquad \text{for } |x| \ge r_n.
\]
On the compact ball $B(0,r_n)$, the continuous function $\cL_0 W + \lambda_n W$ is bounded above by some constant $b_n \ge 0$. Therefore we obtain
\[
\cL_0 W \le -\lambda_n W + b_n \mathbf{1}_{B(0,r_n)}.
\]
Note that  Assumption \ref{ass:Ureg} implies that $F(x)\to \infty$ as $|x|\to\infty$ for $\delta=1/2$. Moreover, if  $|\nabla U(x)| \to \infty$ as $|x|\to\infty$ and $|\Delta U(x)| / |\nabla U(x)|^2 \to 0$ as $|x|\to\infty$, then the desired condition $F(x)\to\infty$ holds for any $\delta\in (0, 1)$. 
 In particular, the standard Gaussian measure $U(x) = |x|^2/2$ satisfies these conditions.

The following lemma is a well-known consequence of such a Lyapunov condition (see, e.g., \cite{bakry2008rate}). We include a short proof for completeness.
\begin{lemma}\label{lmm:poincarefaraway}
Assume that $U$ satisfies the Lyapunov condition, and let $\pi \propto e^{-U}$ be the corresponding invariant measure. If $\phi \in H^1(\pi)$ is supported in $\R^d \setminus B(0,r_n)$, 
\[
\int_{\R^d} \phi^2 \,\pi(dx) \le \frac{1}{\lambda_n} \int_{\R^d} |\nabla \phi|^2 \,\pi(dx),
\]
where $\lambda_n$ is the same constant appearing in \eqref{eq:lyapcond}.
\end{lemma}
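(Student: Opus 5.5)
The plan is to use the standard Lyapunov-function argument: test the drift inequality \eqref{eq:lyapcond} against $\phi^2/W$ and integrate by parts in $L^2(\pi)$, using the symmetry \eqref{eq:symmetryL0} of $\cL_0$. Since $\phi$ is supported in $\R^d\setminus B(0,r_n)$, the indicator term $b_n\mathbf{1}_{B(0,r_n)}$ vanishes on the support of $\phi^2$. Dividing \eqref{eq:lyapcond} by $W\ge 1$ and multiplying by $\phi^2\ge 0$ then gives
\[
\lambda_n\int \phi^2\,\pi(dx)\le \int \frac{-\cL_0 W}{W}\,\phi^2\,\pi(dx).
\]
It therefore suffices to prove the classical inequality
\begin{equation*}
\int \frac{-\cL_0 W}{W}\,\phi^2\,\pi(dx)\le \int |\nabla\phi|^2\,\pi(dx).
\end{equation*}

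To establish it, I first take $\phi$ real-valued and smooth with compact support in $\R^d\setminus B(0,r_n)$; the general case is handled at the end. By \eqref{eq:symmetryL0} with $f=\phi^2/W$ and $g=W$,
\[
\int \frac{-\cL_0 W}{W}\phi^2\,\pi = \int \nabla\Big(\frac{\phi^2}{W}\Big)\cdot\nabla W\,\pi = \int\Big(\frac{2\phi}{W}\nabla\phi\cdot\nabla W - \frac{\phi^2}{W^2}|\nabla W|^2\Big)\pi .
\]
The integrand equals $|\nabla\phi|^2-|\nabla\phi-\frac{\phi}{W}\nabla W|^2\le|\nabla\phi|^2$, which is exactly the desired bound. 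Complex $\phi$ then follow by applying the result to the real and imaginary parts separately.

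The main obstacle is the approximation step. We must pass from smooth compactly supported $\phi$ to general $\phi\in H^1(\pi)$ supported outside the ball, and we must justify the integration by parts when $W$ may grow, e.g.\ $W=Me^{\delta U}$. I would truncate $\phi$ with cutoffs $\chi_R$ so that everything is compactly supported; then $W\in C^2$ and $W\ge1$ make all integrals finite and the formula exact. To retain the support condition, one can work on $\{|x|>r_n\}$ and mollify within a slightly shifted exterior region, or simply note that the pointwise inequality holds wherever $\phi\ne0$. Letting $R\to\infty$, the right-hand side converges by dominated convergence since $\phi\in H^1(\pi)$. On the left, Fatou's lemma applies to the nonnegative quantity $\lambda_n\phi^2\chi_R^2$, which is bounded by $(-\cL_0W/W)\phi^2\chi_R^2$ on the support. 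This yields
\[
\lambda_n\int\phi^2\,\pi(dx)\le\int|\nabla\phi|^2\,\pi(dx),
\]
which is the claim.
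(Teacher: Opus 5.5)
Your proof is correct and is essentially the paper's argument: use the drift inequality on the support of $\phi$, test against $\phi^2/W$ via the symmetry \eqref{eq:symmetryL0} with $f=\phi^2/W$, $g=W$, and bound the integrand by $|\nabla\phi|^2$ through completing the square (the paper's $2a\cdot b-|b|^2\le|a|^2$). Your truncation-and-limit handling of the approximation step is, if anything, more careful than the paper's one-line appeal to density.
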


\begin{proof}
We only have to consider smooth functions supported in $\R^d \setminus B(0,r_n)$. Once the claim holds for smooth functions, the claim can be extended to the $H^1$ functions by density.  Since $\phi$ is supported in $\R^d \setminus B(0,r_n)$, the indicator $\mathbf{1}_{B(0,r_n)}$ vanishes on the support of $\phi$. One has by \eqref{eq:lyapcond} the following pointwisely
\[
\frac{\cL_0 W}{W} \phi^2 \le -\lambda_n \phi^2.
\]
Rearranging and integrating against the invariant measure $\pi$ yields
\begin{equation}
\int_{\R^d} \phi^2 \,\pi(dx) \le \frac{1}{\lambda_n} \int_{\R^d} \frac{-\cL_0 W}{W} \phi^2 \,\pi(dx).
\label{eq:lyap1}
\end{equation}
The operator $\cL_0 = \Delta - \nabla U \cdot \nabla$ is symmetric in $L^2(\pi)$. Taking $f = \frac{\phi^2}{W}$ and $g = W$ in \eqref{eq:symmetryL0}, one has
\begin{equation}\label{eq:lyap2}
-\int_{\R^d} \frac{\phi^2}{W} (\cL_0 W) \,\pi(dx) = \int_{\R^d} \nabla\Bigl( \frac{\phi^2}{W} \Bigr) \cdot \nabla W \,\pi(dx).
\end{equation}
Direct computation yields
\[
\nabla\Bigl( \frac{\phi^2}{W} \Bigr) \cdot \nabla W = \frac{2\phi}{W} \nabla \phi \cdot \nabla W - \frac{\phi^2}{W^2} |\nabla W|^2.
\]
For any vectors $a,b$, the elementary inequality $2a \cdot b - |b|^2 \le |a|^2$ holds. Applying this with $a = \nabla \phi$ and $b = \frac{\phi}{W} \nabla W$, one obtains pointwise
\[
\frac{2\phi}{W} \nabla \phi \cdot \nabla W - \frac{\phi^2}{W^2} |\nabla W|^2 \le |\nabla \phi|^2.
\]
Hence, one has
\[
\int_{\R^d} \phi^2 \,\pi(dx) \le 
\frac{1}{\lambda_n} \int_{\R^d} \nabla\Bigl( \frac{\phi^2}{W} \Bigr) \cdot \nabla W \,\pi(dx)
\le 
\frac{1}{\lambda_n} \int_{\R^d} |\nabla \phi|^2 \,\pi(dx),
\]
which completes the proof.
\end{proof}

We now construct a sequence of asymptotic relaxation enhancing flows on $\mathbb{R}^2$.
The idea is to transplant a small-scale cellular flow from the fixed unit torus $\T^2 = [-1/2,1/2]^2$ to a large region in $\mathbb{R}^2$ via a family of diffeomorphisms $Z_n$, and to use a Lyapunov estimate to control the mass of eigenfunctions at infinity.

For each $n\ge 1$, let $\T_{2n}^2$ be the torus obtained from the square $[-2n,2n]^2$ by identifying opposite sides.
Choose a smooth cutoff function $\chi_n$ on $\T_{2n}^2$ such that $\chi_n = 1$ on $B(0, 3n/2)$ and $\chi_n = 0$ outside $B(0,7n/4)$.
For large $n$ the support of $\chi_n$ is contained in the interior of $[-2n,2n]^2$, hence $\chi_n$ defines a smooth function on $\T_{2n}^2$.
On this torus we define a probability density
\begin{equation}
\pi_n := M_n^{-1}\bigl[ (\pi - \delta_n)\chi_n + \delta_n \bigr], \qquad
\delta_n := \frac12 \inf_{z \in [-2n,2n]^2} \pi(z),
\end{equation}
where $M_n$ is the normalizing constant that makes $\int_{\T_{2n}^2}\pi_n = 1$.
By construction $\pi_n = M_n^{-1}\pi$ on $B(0,3 n/2)$, and since $\pi$ is a probability density on $\mathbb{R}^2$, we have $M_n \to 1$ as $n\to\infty$.

Let $Z_n : \T^2 \to \T_{2n}^2$ be the diffeomorphism that pushes the uniform Lebesgue measure on $\T^2$ forward to $\pi_n$.
Such a map can be built exactly as in Section~\ref{subsec:retorus}, with the origin of $\T^2$ corresponding to the centre of the construction.
Explicitly, $Z_n(x,y) = (P_n(x), Q_n(x,y))$ where
\begin{equation}\label{eq:zn}
P_n'(x) = \frac{1}{\bar{\pi}_n(P_n(x))}, \qquad
\frac{\partial Q_n}{\partial y}(x,y) = \frac{1}{\pi_n(P_n(x), Q_n(x,y))\,\bar{\pi}_n(P_n(x))},
\end{equation}
and $\bar{\pi}_n(p) = \int_{-2n}^{2n} \pi_n(p,q)\,dq$.
Because $\pi_n = M_n^{-1}\pi$ on $B(0,3n/2)$ and $\pi$ is smooth, $Z_n$ is a smooth diffeomorphism for each fixed $n$.

For an integer $m$, let $\mathcal{C}_m$ be the partition of $\T^2$ into cells of side length $1/(2m)$.
Define
\begin{equation}
D(n,m) := \bigcup_{C \in \mathcal{C}_m,\; Z_n(C) \subset B(0,\frac32 n)} Z_n(C).
\end{equation}
The cells $C$ appearing in this union are those whose images under $Z_n$ lie entirely inside the region where $\chi_n = 1$; hence these cells are unaffected by the cutoff.

We now check that for each $n$ we can choose $m$ large enough so that $D(n,m)$ contains $B(0,n)$.
Since $Z_n$ is smooth on the compact torus $\T^2$, it is Lipschitz with some constant $L_n$.
Let $K_n := Z_n^{-1}\bigl(\overline{B(0,n)}\bigr)$, a compact subset of $\T^2$.
If $m$ is so large that $\operatorname{diam}(C) = \sqrt{2}/(2m) < n/(2L_n)$ for every $C\in\mathcal{C}_m$, then for any cell $C$ that intersects $K_n$ we have
\[
Z_n(C) \subset B\bigl(Z_n(x), L_n\operatorname{diam}(C)\bigr) \subset B(0,n + \tfrac12 n) = B(0,\tfrac32 n)
\]
for some $x\in C\cap K_n$.  Hence all such cells belong to $D(n,m)$.
Their images cover $Z_n(K_n) = \overline{B(0,n)}$ and therefore contain $B(0,n)$.
Consequently, there exists an integer $m_0(n)$ such that
\[
D(n,m) \supset B(0,n) \qquad \text{for all } m \ge m_0(n).
\]

We now give the constraints on the parameter $\varphi(n)$, which will determine the cell size of our cellular flow. It is first taken to satisfy 
\begin{equation}\label{eq:cellsize}
\varphi(n) \ge m_0(n)\quad \mbox{and}\quad\varphi(n) \to \infty.
\end{equation}
In the proof of the theorem below we will further increase $\varphi(n)$ if necessary to meet an additional growth condition that involves only quantities already defined there.

Let $\Psi_{\varphi(n)}$ be the stream function of the cellular flow on $\T^2$ with cell size $1/(2\varphi(n))$, i.e.
\begin{equation}
\Psi_{\varphi(n)}(x,y) = \sin\bigl(2\uppi \varphi(n) x\bigr) \cos\bigl(2\uppi \varphi(n) y\bigr),
\end{equation}
and let $\tilde{v}_n = \nabla^\perp \Psi_{\varphi(n)}$ be the corresponding divergence-free flow with respect to the flat measure on $\T^2$.
Pushing $\tilde{v}_n$ forward by $Z_n$ yields a flow $v_n'$ on $\T_{2n}^2$ that is divergence-free with respect to $\pi_n$.
Because the pushforward preserves streamlines and the streamlines of $\tilde{v}_n$ enclose simply connected regions, there exists a function $\Psi_{\varphi(n)}'$ such that
\begin{equation}
v_n' = \frac{1}{\pi_n} \nabla^\perp \Psi_{\varphi(n)}'.
\end{equation}

Finally, we extend $v_n'$ to the whole plane $\mathbb{R}^2$ by multiplying the stream function with the cutoff $\chi_n$ and the factor $M_n$, obtaining
\begin{equation}\label{eq:vnR2}
v_n := \frac{M_n}{\pi} \nabla^\perp \bigl( \chi_n \Psi_{\varphi(n)}' \bigr).
\end{equation}
By construction, $v_n$ is smooth, compactly supported in $B(0,2n)$, and satisfies $\nabla \cdot (\pi v_n) = 0$ on $\mathbb{R}^2$.
Thus each $v_n$ has finite energy $\int_{\mathbb{R}^2} |v_n|^2 \pi(dx)$.
Moreover, on $B(0,\frac32 n)$ the flow $v_n$ coincides with $v_n'$; therefore, on $D(n,\varphi(n)) \supset B(0,n)$ the flow $v_n$ is conjugate via $Z_n$ to the cellular flow $\tilde{v}_n$ on $\T^2$.

Our main claim is the following.

\begin{theorem}
There exists a choice of $\varphi(n) \to \infty$ with $\varphi(n) \ge m_0(n)$ such that the sequence $\{v_n\}$ defined in \eqref{eq:vnR2} has finite energy for each $n$, and is asymptotic relaxation enhancing with respect to $\pi$ on $\mathbb{R}^2$.
\end{theorem}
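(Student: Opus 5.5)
Finite energy is immediate: $v_n$ is smooth and supported in $B(0,2n)$. The substance is $r(v_n)\to\infty$. We argue by contradiction. Suppose along a subsequence there are normalized $\phi_n\in\dot{\mathcal I}(v_n)$ with $\|\phi_n\|_{L^2(\pi)}=1$, $v_n\cdot\nabla\phi_n=i\mu_n\phi_n$, and $\int|\nabla\phi_n|^2\pi\le M$. As in the proof of Proposition \ref{pro:asymCell}, we pass to $\Phi_n=|\phi_n|$. It is a real $H^1(\pi)$ first integral of $v_n$ with $\int|\nabla\Phi_n|^2\pi\le M$, and the mean-zero constraint is kept for later use.

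\textbf{Step 1 (mass at infinity).} Take a cutoff $\eta_n$ equal to $0$ on $B(0,n/2)$ and $1$ outside $B(0,n)$, with $|\nabla\eta_n|\le 4/n$. Apply Lemma \ref{lmm:poincarefaraway} with $r_n=n/2$ to $\eta_n\phi_n$. This gives
\[
\int_{|x|\ge n}|\phi_n|^2\pi\le\frac{2}{\lambda_n}\Bigl(M+\frac{16}{n^2}\Bigr)\to0.
\]
So asymptotically all the mass sits in $B(0,n)\subset D(n,\varphi(n))$.

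\textbf{Step 2 (pull back to the torus).} On $D(n,\varphi(n))$ the flow is conjugate via $Z_n$ to the cellular flow $\tilde v_n$. Moreover $\pi_n=M_n^{-1}\pi$ there, and $Z_n$ pushes Lebesgue measure to $\pi_n$. Hence $\tilde\phi_n=\phi_n\circ Z_n$ restricted to the cells of $\mathcal C_{\varphi(n)}$ inside $Z_n^{-1}(D)$ is an eigenfunction of $\tilde v_n\cdot\nabla$ cellwise, with the same $L^2$ mass up to $M_n$. Gradients transform with $DZ_n$, so
\[
\int_{Z_n^{-1}(D)}|\nabla\tilde\phi_n|^2\,dx\le L_n^2M_n\int|\nabla\phi_n|^2\pi\le L_n^2M_nM,
\]
where $L_n$ bounds $|DZ_n|$ on $\T^2$. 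The cellwise argument of Proposition \ref{pro:asymCell} then applies on each cell fully contained in the region, with the lemma giving the common boundary value on cell boundaries inside it. On each such cell, the Poincar\'e/variation estimates give either $\int_C|\nabla\tilde\Phi|^2\gtrsim\varphi(n)^2\int_C|\tilde\Phi-a|^2$, or a per-cell lower bound $c''$ on the gradient energy.

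\textbf{Step 3 (choose $\varphi(n)$).} I would choose $\varphi(n)\ge m_0(n)$ so large that $\varphi(n)^2\delta\ge n\,L_n^2M_n$. This is the ``additional growth condition involving only quantities already defined''. With this choice, any normalized first integral concentrated in $Z_n^{-1}(D)$ must have torus gradient energy at least $\delta\varphi(n)^2$ times its nonconstant mass. Transporting back, the weighted energy is at least of order $n$ times that mass, which contradicts the bound $M$.

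\textbf{Main obstacle.} The hardest step is handling what is left over. First, the region $Z_n^{-1}(D)$ is not the whole torus. Second, in Case~2 the function $\tilde\Phi$ could be nearly the constant $a$ on the inner cells, so nothing is wasted there. I would close this off with the mean-zero condition $\int\phi_n\pi=0$, as in Subcase 2.2. Since the mass outside $B(0,n)$ tends to zero by Step 1, $\phi_n$ cannot be nearly a constant times a fixed phase on most of the mass. So either the phase $\theta$ varies on a positive fraction of the cells, each contributing a fixed energy $c_2'$, giving total $\gtrsim\varphi(n)^2/L_n^2\ge n$; or $\Phi$ differs from $a$ on substantial mass, which the cellwise Poincar\'e estimate handles. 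Two points need checking. One is that the $\mu\neq0$ exclusion of Case~2 still works cellwise. The other is that cutting off at the boundary of $D$ (partial cells) costs only a vanishing fraction of the mass. The latter is where I expect to again use Step 1, together with smallness of $\pi$ on $D\setminus B(0,n)$.
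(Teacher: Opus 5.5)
Your proposal is correct in outline and follows the paper's overall plan. Both arguments pull back through $Z_n$, use a cellwise Poincar\'e inequality with a common boundary value at scale $\varphi(n)$, choose $\varphi(n)$ so that $\varphi(n)^2/L_n^2\to\infty$, use the mean-zero condition to exclude the near-constant case, and control the tail with the Lyapunov lemma. The arguments differ in two places.

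\textbf{Tail control.} Your Step~1 turns the Lyapunov lemma into an a priori tail bound under the contradiction hypothesis. The paper instead splits according to whether $\int_{\tilde D_n}|\tilde\phi|^2\ge 1-\delta$ with $\delta=1/12$, and applies the Lyapunov lemma separately in its Case~1 and Subcase~2.2. Your version is cleaner and removes that split.

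\textbf{Complex eigenfunctions.} You work with $|\phi|$ and the phase $\theta$, importing the torus proof's Case~2 machinery (the sets $S_1,S_2$ and the cells where the phase varies). The paper instead splits on $\mu$:
\begin{itemize}
\item For $\mu\neq0$, the period-function argument forces $\tilde\phi\equiv0$ on every cell of $\tilde D_n$. In your framework this contradicts Step~1 immediately.
\item For $\mu=0$, the real and imaginary parts of $\phi$ are real mean-zero first integrals. So $\phi$ itself takes a single boundary value $b$ on all cells, and no phase analysis is needed.
\end{itemize}

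Combining your Step~1 with this dichotomy, the whole $\mu=0$ case reduces to
\[
\int_{\tilde D_n}|\nabla\tilde\phi|^2\,dx\ \ge\ 4\kappa_0\varphi(n)^2\Bigl(\int_{\tilde D_n}|\tilde\phi|^2\,dx-\frac{1}{|\tilde D_n|}\Bigl|\int_{\tilde D_n}\tilde\phi\,dx\Bigr|^2\Bigr).
\]
Step~1 together with $\int\phi\,\pi=0$ makes the bracket $1-o(1)$, since $|\int_{\tilde D_n}\tilde\phi|\le M_n^{-1}(\int_{|x|\ge n}|\phi|^2\pi)^{1/2}\to0$. What you identified as the ``main obstacle'' therefore disappears: your route works, but it does more than necessary.

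One small slip: the change of variables in Step~2 gives the factor $M_n^{-1}$, not $M_n$. This is harmless since $M_n\to1$.
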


\begin{proof}
For each $n$, let $E_n$ be the set of all eigenfunctions $\phi \in H^1(\pi)$ of $v_n \cdot \nabla$ with $\int_{\mathbb{R}^2} \phi \,\pi(dx) = 0$.
We prove that there exist constants $c_n \to \infty$ such that
\begin{equation}\label{eq:bound}
\inf_{\phi \in E_n} \frac{\int_{\mathbb{R}^2} |\nabla \phi|^2 \,\pi(dx)}{\int_{\mathbb{R}^2} |\phi|^2 \,\pi(dx)} \ge c_n.
\end{equation}

Fix $\phi \in E_n$ with $\|\phi\|_{L^2(\pi)} = 1$ and $v_n \cdot \nabla \phi = i\mu \phi$.
Set
\[
\tilde{D}_n := Z_n^{-1}\bigl(D(n,\varphi(n))\bigr), \qquad
\tilde{\phi}(x,y) := \phi\bigl(Z_n(x,y)\bigr) \text{ for } (x,y) \in \tilde{D}_n.
\]

Because $\varphi(n) \ge m_0(n)$, we have $D(n,\varphi(n)) \supset B(0,n)$, and $\tilde{D}_n$ consists of a union of whole cells.
For large $n$, $\tilde{D}_n$ is contained in a fixed compact square $K = [-1/4,1/4]^2 \subset \T^2$. On $K$, the maps $Z_n$ are smooth, hence $\|DZ_n(x)\|$ is bounded for each fixed $n$.
Define
\[
C_n := \sup_{x \in K} \|DZ_n(x)\|_{L^{\infty}}^2,
\]
which is finite for every $n$.
By the chain rule and the change of variables, we obtain for every smooth $\phi$ the inequality
\begin{equation}\label{eq:grad-ineq}
\int_{\tilde{D}_n} |\nabla \tilde{\phi}|^2 \,dx \le C_n \int_{D(n,\varphi(n))} |\nabla \phi|^2 \,\pi_n(dx).
\end{equation}

We now adjust the final choice of $\varphi(n)$.  So far $\varphi(n)$ only needs to satisfy $\varphi(n) \ge m_0(n)$ and $\varphi(n) \to \infty$.
By increasing $\varphi(n)$ further if necessary, we can also ensure that
\[
\frac{\varphi(n)^2}{C_n} \to \infty \quad \text{as } n\to\infty.
\]
This is possible because $C_n$ is already fixed for each $n$. We work with such a choice from now on.

\medskip
\noindent \textbf{Case 1: $i\mu \neq 0$.}
As in the proof of Proposition~\ref{pro:asymCell}, a nonzero eigenvalue forces $\tilde{\phi}$ to vanish on every cell of $\tilde{D}_n$.
Thus $\tilde{\phi} \equiv 0$ on $\tilde{D}_n$, and hence $\phi \equiv 0$ on $D(n,\varphi(n))$.
Since $D(n,\varphi(n)) \supset B(0,n)$, $\phi$ is supported in $\mathbb{R}^2 \setminus B(0,n)$.
Applying Lemma~\ref{lmm:poincarefaraway} to the real and imaginary parts of $\phi$ yields
\[
\int_{\mathbb{R}^2} |\nabla \phi|^2 \,\pi(dx) \ge \lambda_{n} \to \infty.
\]

\medskip
\noindent \textbf{Case 2: $i\mu = 0$.}
Now $\phi$ is a first integral; we may assume $\phi$ is real (the general case follows by applying the obtained estimate to real and imaginary parts), and $\int \phi^2 \pi = 1$.
Fix $\delta = 1/12$.

\medskip
\noindent \textit{Subcase 2.1:} $\displaystyle \int_{\tilde{D}_n} |\tilde{\phi}|^2 \,dx \ge 1 - \delta$.

By Proposition~\ref{pro:asymCell} applied cell by cell, $\tilde{\phi}$ is continuous on $\tilde{D}_n$
except possibly at cell centres, and attains a common value $a$ on the boundaries of all cells
in $\tilde{D}_n$.  For each cell $C \subset \tilde{D}_n$, the function $\tilde{\phi} - a$
vanishes on $\partial C$, so by scaling and the Poincar\'e inequality,
\[
\int_C |\nabla \tilde{\phi}|^2 \,dx \ge \kappa_0 \varphi(n)^2 \int_C |\tilde{\phi} - a|^2 \,dx,
\]
with a universal constant $\kappa_0 > 0$.  Summing over all cells in $\tilde{D}_n$ gives
\begin{equation}\label{eq:cellPoinc}
\int_{\tilde{D}_n} |\nabla \tilde{\phi}|^2 \,dx \ge \kappa_0 \varphi(n)^2 \int_{\tilde{D}_n} |\tilde{\phi} - a|^2 \,dx .
\end{equation}

To replace $a$, we use the fact that for any square-integrable $f$,
\[
\int_{\tilde{D}_n} |f - c|^2 \,dx \ge \min_{c\in\mathbb{R}} \int_{\tilde{D}_n} |f - c|^2 \,dx
= \int_{\tilde{D}_n} f^2 \,dx - \frac{1}{|\tilde{D}_n|}\Bigl(\int_{\tilde{D}_n} f \,dx\Bigr)^2 .
\]
Applying this with $f = \tilde{\phi}$ and $c = a$ yields
\[
\int_{\tilde{D}_n} |\tilde{\phi} - a|^2 \,dx
\ge \int_{\tilde{D}_n} \tilde{\phi}^2 \,dx - \frac{1}{|\tilde{D}_n|}\Bigl(\int_{\tilde{D}_n} \tilde{\phi} \,dx\Bigr)^2 .
\]

We now estimate the mean of $\tilde{\phi}$. By change of variables, we have
\[
\int_{\tilde{D}_n} \tilde{\phi} \,dx = M_n^{-1} \int_{D(n,\varphi(n))} \phi \,\pi(dx).
\]
Since $\int_{\mathbb{R}^2} \phi \,\pi(dx) = 0$, this equals $-M_n^{-1} \int_{\mathbb{R}^2\setminus D(n,\varphi(n))} \phi \,\pi(dx)$.
Applying the Cauchy-Schwarz inequality and using $\int_{\mathbb{R}^2} \phi^2\pi = 1$,
\[
\Bigl|\int_{\tilde{D}_n} \tilde{\phi} \,dx\Bigr|
\le M_n^{-1} \sqrt{\int_{\mathbb{R}^2\setminus D(n,\varphi(n))} \phi^2 \,\pi(dx)} .
\]
The hypothesis $\int_{\tilde{D}_n} |\tilde{\phi}|^2 dx = M_n^{-1} \int_{D(n,\varphi(n))} \phi^2 \pi(dx) \ge 1-\delta$ implies
\[
\int_{\mathbb{R}^2\setminus D(n,\varphi(n))} \phi^2 \,\pi(dx) \le 1 - M_n(1-\delta).
\]
Therefore,
\[
\Bigl|\int_{\tilde{D}_n} \tilde{\phi} \,dx\Bigr| \le M_n^{-1} \sqrt{1 - M_n(1-\delta)} .
\]

Next, we bound $|\tilde{D}_n|$ from below.  Because $D(n,\varphi(n)) \supset B(0,n)$,
\[
|\tilde{D}_n| = \int_{\tilde{D}_n} dx = \int_{D(n,\varphi(n))} \pi_n(dy)
= M_n^{-1} \int_{D(n,\varphi(n))} \pi(y)\,dy
\ge M_n^{-1} \int_{B(0,n)} \pi(y)\,dy .
\]
The last integral tends to $1$, so $|\tilde{D}_n| \ge \frac12$ for all large $n$, giving $\frac1{|\tilde{D}_n|} \le 2$.

Now combine these estimates.  Using $\int_{\tilde{D}_n} \tilde{\phi}^2 \ge 1-\delta$,
\begin{align*}
\int_{\tilde{D}_n} |\tilde{\phi} - a|^2 \,dx
&\ge (1-\delta) - \frac{1}{|\tilde{D}_n|}\Bigl(\int_{\tilde{D}_n} \tilde{\phi}\Bigr)^2 \\
&\ge (1-\delta) - 2 \Bigl( M_n^{-1} \sqrt{1 - M_n(1-\delta)} \Bigr)^2 .
\end{align*}
Since $M_n\to1$, the term in parentheses tends to $\sqrt{\delta}$.  Since $\delta = 1/12$,
for large $n$ we obtain
\[
\int_{\tilde{D}_n} |\tilde{\phi} - a|^2 \,dx \ge \frac14 .
\]

Insert this into \eqref{eq:cellPoinc} and then by \eqref{eq:grad-ineq} together with $\pi_n = M_n^{-1}\pi$ on $D(n,\varphi(n))$:
\[
\int_{D(n,\varphi(n))} |\nabla \phi|^2 \,\pi(dx) \ge \frac{\kappa_0}{4C_n} \, \varphi(n)^2 M_n .
\]
By the choice of $\varphi(n)$ (which guarantees $\varphi(n)^2/C_n \to \infty$) and $M_n\to1$, the right-hand side tends to infinity.

\medskip
\noindent \textit{Subcase 2.2:} $\displaystyle \int_{\tilde{D}_n} |\tilde{\phi}|^2 \,dx \le 1 - \delta$.

The pushforward measure relation gives
\[
\int_{D_{n,\varphi(n)}} |\phi|^2 \,\pi_n(dx) = \int_{\tilde{D}_n} |\tilde{\phi}|^2 \,dx \le 1 - \delta.
\]

Recall that $D_{n,\varphi(n)}$ contains $B(0,n)$. 
Since $\int_{\mathbb{R}^2} \phi^2 \pi = 1$, it follows that
\[
\int_{\mathbb{R}^2 \setminus B(0,n)} |\phi|^2 \,\pi(dx) \ge 1 - M_n(1-\delta).
\]
Because $M_n \to 1$, for all sufficiently large $n$ we have $M_n \le 1+\delta$, and therefore
\[
\int_{\mathbb{R}^2 \setminus B(0,n)} |\phi|^2 \,\pi(dx) \ge 1 - (1+\delta)(1-\delta) = \delta^2 > 0.
\]

Let $g_n$ be a smooth cutoff function with $g_n = 1$ on $B(0, n-1)$, $g_n = 0$ outside $B(0, n)$, and $|\nabla g_n| \le C$. Set $\phi^c = \phi(1 - g_n)$, which is supported outside $B(0, n-1)$. By Lemma~\ref{lmm:poincarefaraway},
\[
\int_{\mathbb{R}^2} |\nabla \phi^c|^2 \,\pi(dx) \ge \lambda_{n-1} \int_{\mathbb{R}^2} |\phi^c|^2 \,\pi(dx) \ge \lambda_{n-1} \delta^2.
\]

Using $|\nabla \phi|^2 \ge \frac{1}{2} |\nabla \phi^c|^2 - |\phi|^2 |\nabla g_n|^2$ and $\int |\phi|^2 \pi = 1$, we get
\[
\int_{\mathbb{R}^2} |\nabla \phi|^2 \,\pi(dx) \ge \frac{1}{2} \delta^2 \, \lambda_{ n-1 } - C^2.
\]
The right hand side diverges.

\medskip
In all cases, we have produced a sequence $c_n \to \infty$ such that $\int |\nabla \phi|^2 \pi \ge c_n$ for every normalized $\phi \in E_n$. This proves \eqref{eq:bound}, so $r(v_n) \to \infty$ and $\{v_n\}$ is asymptotic relaxation enhancing.
\end{proof}

\section*{Acknowledgments}
This work was financially supported by the National Key R\&D Program of China, Project No. 2025YFA1018300, 2021YFA1002800 and 2021YFA1001200. The work of Y. Feng was partially supported by NSFC 12301283, Science and Technology Commission of Shanghai Municipality (No. 22DZ2229014). The work of L. Li was partially supported by NSFC 12371400. The work of X. Xu was partially supported by Kunshan Shuangchuang Talent Program kssc202102066.

\bibliographystyle{plain}
\bibliography{Enhanced-dissipation}

\end{document}